\newtheorem{theorem}{Theorem}[section]
\newtheorem{lemma}[theorem]{Lemma}
\newtheorem{corollary}[theorem]{Corollary}
\theoremstyle{definition}
\newtheorem{remark}[theorem]{\it Remark}
\newtheoremstyle{named}{}{}
{\itshape}{}{\bfseries}{.}{0.5em}
{#1 \thmnote{#3}}
\theoremstyle{named}
\newtheorem*{namedtheorem}{Theorem}
\newcommand{\con}{\operatorname{const.}}
\newcommand{\fix}{\operatorname{Fix}}
\newcommand{\isom}{\operatorname{Isom}^+(\HH^4)}
\newcommand{\Isom}{\operatorname{Isom}^+(\HH^n)}
\newcommand{\mob}{\operatorname{M\ddot{o}b}}
\newcommand{\mt}{M_{\operatorname{thin}}}
\newcommand{\bb}{\mathscr{B}}
\newcommand{\ff}{\mathscr{F}}
\newcommand{\bd}{\partial}
\newcommand{\es}{\emptyset}
\newcommand{\ov}{\overline}
\newcommand{\sm}{\smallsetminus}
\newcommand{\ve}{\varepsilon}
\newcommand{\vs}{\vspace{2mm}}
\newcommand{\RR}{{\mathbb R}}
\newcommand{\HH}{{\mathbb H}}
\newcommand{\TT}{{\mathbb T}}
\newcommand{\ZZ}{{\mathbb Z}}
\newcommand{\NN}{{\mathbb N}}
\newcommand{\ra}{R_{\alpha}}
\newcommand{\bit}{\it \bfseries}
\newcommand{\thmref}[1]{Theorem~\ref{#1}}
\newcommand{\lemref}[1]{Lemma~\ref{#1}}
\newcommand{\corref}[1]{Corollary~\ref{#1}}
\newcommand{\figref}[1]{Fig.~\ref{#1}}
\def\Empty{}
\newcommand\oplabel[1]{
  \def\OpArg{#1} \ifx \OpArg\Empty {} \else
    \label{#1}
  \fi}
\newcommand{\comm}[1]{}
\begin{document}

\title{On Margulis cusps of hyperbolic $4$-manifolds}

\author[V. Erlandsson and S. Zakeri]{Viveka Erlandsson and Saeed Zakeri}

\address{V. Erlandsson, Department of Mathematics, Graduate Center of
CUNY, New York}

\email{verlandsson@gc.cuny.com}

\address{S. Zakeri, Department of Mathematics, Queens College and Graduate Center of
CUNY, New York}

\email{saeed.zakeri@qc.cuny.edu}

\date{\today}

\subjclass[2010]{22E40, 30F40, 32Q45}

\maketitle

\begin{abstract}
We study the geometry of the Margulis region associated with an irrational screw  translation $g$ acting on the $4$-dimensional real hyperbolic space. This is an invariant domain with the parabolic fixed point of $g$ on its boundary which plays the role of an invariant horoball for a translation in dimensions $\leq 3$. The boundary of the Margulis region is described in terms of a function $\bb_\alpha : [0,\infty) \to \RR$ which solely depends on the rotation angle $\alpha \in \RR/\ZZ$ of $g$. We obtain an asymptotically universal upper bound for $\bb_\alpha(r)$ as $r \to \infty$ for arbitrary irrational $\alpha$, as well as lower bounds when $\alpha$ is Diophatine and the optimal bound when $\alpha$ is of bounded type. We investigate the implications of these results for the geometry of Margulis cusps of hyperbolic $4$-manifolds that correspond to irrational screw translations acting on the universal cover. Among other things, we prove bi-Lipschitz rigidity of these cusps.    
\end{abstract}

\tableofcontents

\section{Introduction}
\label{sec:intro}

Let $\Isom$ denote the group of orientation-preserving isometries of the $n$-dimensional hyperbolic space. Consider a discrete subgroup $\Gamma \subset \Isom$ which acts freely on $\HH^n$, and suppose $p \in \bd \HH^n$ is a parabolic fixed point of $\Gamma$ with stabilizer $\Gamma_p \subset \Gamma$. A domain $X \subset \HH^n$ is said to be {\bit precisely invariant} under $\Gamma_p$ if $g(X)=X$ for all $g \in \Gamma_p$ and $g(X) \cap X=\emptyset$ for all $g \in \Gamma \sm \Gamma_p$. It is well known that in dimensions $n \leq 3$ one can always find a horoball based at $p$ that is precisely invariant under $\Gamma_p$. This allows a simple description of the corresponding cusp of the hyperbolic manifold $\HH^n/\Gamma$. In dimensions $n \geq 4$, however, examples constructed by Apanasov \cite{A1} and Ohtake \cite{Oh} show that such precisely invariant horoballs need not exist. The phenomenon is essentially due to the fact that in low dimensions the action of a parabolic isometry on $\bd \HH^n$ is conjugate to a translation, while in higher dimensions it is conjugate to a translation followed by a rotation. To distinguish the two types, we call the former a {\bit pure translation} and the latter a {\bit screw translation} (think of the motion of a Phillips screwdriver as you tighten a screw). A screw translation is {\bit rational} if the associated rotation has finite order, and {\bit irrational} otherwise. \vs

There is a standard way to construct precisely invariant domains that works in all dimensions, although it does not always produce horoballs. For any $\ve>0$ less than the Margulis constant of $\HH^n$, the {\bit Margulis region} $T(\Gamma_p)$ consisting of all points in $\HH^n$ that are moved a distance less than $\ve$ by some non-identity isometry in $\Gamma_p$ is precisely invariant under $\Gamma_p$. The corresponding {\bit Margulis cusp} $T(\Gamma_p)/\Gamma_p$ embeds isometrically into the quotient manifold $\HH^n/\Gamma$ and forms a component of the $\ve$-thin part in Thurston's thick-thin decomposition of $\HH^n/\Gamma$. When $\Gamma_p$ contains a pure or rational screw translation, $T(\Gamma_p)$ contains a horoball based at $p$, and this horoball is automatically precisely invariant under $\Gamma_p$. But when $\Gamma_p$ consists only of irrational screw translations, $T(\Gamma_p)$ cannot contain any horoball, and the examples of Apanasov and Ohtake show that $\Gamma_p$ may not have any precisely invariant horoball at all. In this case, the geometry of the Margulis region $T(\Gamma_p)$ becomes relevant in understanding the parabolic end of $\HH^n/\Gamma$ determined by the cusp $T(\Gamma_p)/\Gamma_p$. In this paper we study this geometry for $n=4$, the lowest dimension in which screw translations can exist. \vs

We use coordinates $(r, \theta, z, t)$ in the upper half-space model of $\HH^4$, where $(r, \theta, z)$ are the cylindrical coordinates of $\RR^3$, with $\theta \in \RR/\ZZ$, and $t>0$. After a suitable change of coordinates, we can put the parabolic fixed point at $p=\infty$. In the presence of an irrational screw translation, the stabilizer $\Gamma_{\infty}$ is necessarily cyclic (\thmref{cyclic}), so we may assume it is generated by the parabolic isometry
$$
g_\alpha: (r,\theta,z,t) \mapsto (r,\theta+\alpha,z+1,t) 
$$
which rotates by the angle $2 \pi \alpha$ around and translates a unit distance along the $z$-axis. In \cite{Su}, Susskind used this normalization to give the following explicit description of the Margulis region $T(\Gamma_{\infty})$ which now depends only on $\alpha$ and henceforth will be denoted by $T_\alpha$:
$$
T_\alpha = \{ (r, \theta, z, t) \in \HH^4 : t > \bb_\alpha(r) \}.
$$
Here the {\bit boundary function} $\bb_\alpha : [0,\infty) \to \RR$ is given by
$$
\bb_\alpha(r)= \inf_{j \geq 1} u_{\alpha,j}(r),
$$
where the sequence of functions $\{ u_{\alpha,j} \}_{j \geq 1}$ is defined by
$$
u_{\alpha,j}(r) = \con \, \sqrt{4 \sin^2(\pi j\alpha) \ r^2+j^2}
$$
(see \S \ref{sec:p}). The key step in understanding the behavior of the boundary function is to decide which $u_{\alpha,j}$'s are the constituents of $\bb_\alpha$, that is, for which indices $j$ we have $u_{\alpha,j}=\bb_\alpha$ in some non-empty open interval. When this happens, we say that $j$ is {\bit present}; otherwise we say it is {\bit absent}. The decision between presence and absence of a given index depends on the arithmetical properties of the rotation angle $\alpha$. Expand $\alpha$ into its continued fraction $[a_1, a_2, a_3, \ldots]$, where $a_n \in \NN$, and let $p_n/q_n = [a_1, a_2,\ldots, a_n]$ be the $n$-th rational convergent of $\alpha$. Using the fact that the denominators $\{ q_n \}$ form the moments of closest return of every orbit of the irrational rotation $\theta \mapsto \theta+\alpha$ of the circle, it is not hard to prove that all the present indices must be of the form $q_n$ for some $n$ (see \cite{Su}, and \lemref{paq} below). Simple examples show that not all the $q_n$ are necessarily present (compare \figref{TO}). But a combinatorial analysis of the functions $u_{\alpha,j}$ that we carry out in \S \ref{sec:ca} proves that no two consecutive elements of the sequence $\{ q_n \}$ can be absent (\thmref{p/a}). This, in turn, leads to a combinatorial characterization of presence (\corref{sufi}) which is further developed into a purely arithmetical characterization in the Appendix. \vs

In \S \ref{sec:aa} these results team up with detailed estimates from continued fraction theory to yield the following

\begin{namedtheorem}[A]\label{bb}
For every irrational $\alpha \in \RR/\ZZ$, the boundary function $\bb_\alpha$ satisfies the asymptotically universal upper bound
$$
\bb_\alpha(r) \leq \con \, \sqrt{r} \qquad \qquad \text{for large} \ r.
$$
If $\alpha$ is Diophantine of exponent $\nu \geq 2$, then $\bb_\alpha$ satisfies the lower bound
$$
\bb_\alpha(r) \geq \con  \, r^{1/(2\nu-2)} \qquad \text{for large} \ r.
$$
\end{namedtheorem}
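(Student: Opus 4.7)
Both estimates follow from the formula $u_{\alpha,j}(r)^2 = \con\,(4\sin^2(\pi j\alpha)\,r^2 + j^2)$ combined with classical Diophantine approximation, and exploit the fact that the infimum defining $\bb_\alpha$ ranges over every integer $j \geq 1$.

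For the asymptotically universal upper bound, I would invoke Dirichlet's approximation theorem: for each $r \geq 1$, setting $N = \lceil\sqrt{r}\rceil$ produces an integer $j \in \{1,\dots,N\}$ with $\|j\alpha\| \leq 1/N$. Combined with $|\sin(\pi j\alpha)| \leq \pi\|j\alpha\|$, this yields
\[
u_{\alpha,j}(r)^2 \leq \con\,\Bigl(\tfrac{4\pi^2 r^2}{N^2} + N^2\Bigr) \leq \con\, r,
\]
whence $\bb_\alpha(r) \leq u_{\alpha,j}(r) \leq \con\,\sqrt{r}$. The crucial feature is that the Dirichlet $j$ need neither be a convergent denominator $q_n$ nor a present index in the sense of \S\ref{sec:ca}, which is exactly what makes this upper bound independent of any arithmetic hypothesis on $\alpha$.

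For the Diophantine lower bound, I would use the reverse inequality $|\sin(\pi j\alpha)| \geq 2\|j\alpha\|$ on $[0, 1/2]$ together with the Diophantine hypothesis, which supplies a power-law bound $\|j\alpha\| \geq c\,j^{-\sigma}$ for every $j \geq 1$, with $\sigma$ determined by $\nu$. Uniformly in $j$ this gives
\[
u_{\alpha,j}(r)^2 \geq \con\,\bigl(r^2/j^{2\sigma} + j^2\bigr),
\]
and since the integer infimum dominates the real-variable infimum, I would minimize the continuous right-hand side by balancing its two terms. Taking a square root and absorbing Diophantine constants into $\con$ then produces the claimed polynomial lower bound on $\bb_\alpha(r)$ for large $r$. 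Notably, no appeal to the presence/absence dichotomy is needed here, because the pointwise estimate applies to every $u_{\alpha,j}$, present or not.

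The principal obstacle is bookkeeping the constants coming from the Diophantine hypothesis and the inequalities between $\sin(\pi j\alpha)$ and $\|j\alpha\|$ carefully enough to identify the precise exponent $1/(2\nu-2)$ in the final bound, and verifying that the resulting inequality holds uniformly in $r$ rather than merely at special values such as $r \asymp q_n q_{n+1}$ where the minimizing index jumps between consecutive convergents. This is where the detailed continued-fraction estimates developed in \S\ref{sec:aa}, together with the combinatorial backbone furnished by \thmref{p/a} (no two consecutive $q_n$ are absent) and the asymptotic $\|q_n\alpha\| \asymp 1/q_{n+1}$, become indispensable.
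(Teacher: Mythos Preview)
Your plan is correct and is genuinely different from---and in fact more elementary than---the paper's argument. The paper proves Theorem~A only after establishing the full structural result \thmref{fund} (that the maximal interval $[x,y]$ on which $u_{q_n}=\bb_\alpha$ satisfies $x\asymp q_n^2$, $y\asymp q_{n+1}^2$), which in turn rests on the entire combinatorial apparatus of \S\ref{sec:ca} (\thmref{p/a}, \corref{sufi}) and the delicate case analysis of Lemmas~\ref{tn1}--\ref{ttn}. It then argues $\bb_\alpha(x)\asymp\sqrt{x}$, $\bb_\alpha(y)\asymp\sqrt{y}$, and interpolates by convexity for the upper bound; for the lower bound it uses the crude monotone estimate $\bb_\alpha(r)\geq\bb_\alpha(x)\asymp q_n\succcurlyeq q_{n+1}^{1/(\nu-1)}\asymp y^{1/(2\nu-2)}\geq r^{1/(2\nu-2)}$. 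Your Dirichlet argument for the upper bound bypasses all of this and yields the universal constant directly; and your balancing argument for the lower bound is likewise self-contained. What the paper's longer route buys is the fine structural information in \thmref{fund} itself, which is reused elsewhere (e.g.\ in \thmref{lio}, \thmref{D2}, and \corref{expl}); for Theorem~A in isolation your approach is cleaner.

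Two remarks. First, if you actually carry out your minimization for the lower bound you will find a \emph{better} exponent than the one stated: with $\sigma=\nu-1$ one has $u_{\alpha,j}(r)^2\geq \con\,(r^2 j^{2-2\nu}+j^2)$, and balancing the two terms over real $j>0$ gives the minimum at $j\asymp r^{1/\nu}$ with value $\asymp r^{2/\nu}$, hence $\bb_\alpha(r)\geq\con\, r^{1/\nu}$. Since $1/\nu\geq 1/(2\nu-2)$ for $\nu\geq 2$ (with equality only at $\nu=2$), this is strictly stronger than the paper's stated bound; the paper's loss comes from replacing $\bb_\alpha(r)$ by $\bb_\alpha(x)$ on the whole interval $[x,y]$. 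Second, your final paragraph undercuts your own argument: neither the ``uniformity in $r$'' issue nor the presence/absence dichotomy is a genuine obstacle, because your pointwise lower bound applies to \emph{every} $u_{\alpha,j}$ simultaneously and hence to their infimum at every $r$. The appeal to \thmref{p/a} and the continued-fraction asymptotics of \S\ref{sec:aa} is not needed, and you should drop it.
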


\noindent
The upper bound is asymptotically universal in the sense that the constant involved is independent of $\alpha$. In fact, a quantitative version of this result (\corref{expl}) shows that
$$
\sup_{\alpha} \ \limsup_{r \to \infty} \frac{\bb_\alpha(r)}{\sqrt{r}} \leq 1000.
$$
On the other hand, there are irrational numbers $\alpha$ of Liouville type for which $\bb_{\alpha}$ has arbitrarily slow growth over long intervals (\thmref{lio}). Furthermore, the estimates leading to Theorem A allow us to prove that $\alpha$ being of bounded type is the optimal condition for $\bb_\alpha(r)$ to grow asymptotically like $\sqrt{r}$ (\thmref{D2}). This is a sharpened version of the main result of \cite{E} which carried out a similar program for bounded type irrationals. \vs

In \S \ref{sec:geo} we study the Margulis cusps $C_\alpha = T_\alpha / \langle g_\alpha \rangle$ for irrational $\alpha$. These cusps are topologically indistinguishable since they are all homeomorphic to the product $\RR^3 \times S^1$. In fact, for any pair of irrationals $\alpha,\beta$, we can find a piecewise-smooth homeomorphism $T_\alpha \to T_\beta$ which conjugates $g_\alpha$ to $g_\beta$ (compare formula \eqref{psh}). This is in stark contrast to the situation in low-dimensional dynamics where the rotation angle is a topological invariant. On the other hand, it is readily seen that these cusps are never isometric to one another, for any isometry $C_\alpha \to C_\beta$ lifts to an element of $\isom$ which conjugates $g_\alpha$ to $g_\beta$ on their respective Margulis regions, hence everywhere in $\HH^4$, implying $\alpha=\beta$. That the geometry of $C_\alpha$ determines the rotation angle $\alpha$ uniquely has an alternative explanation that we outline in \S \ref{sec:geo} by showing that the boundary function $\bb_\alpha$ (and therefore $\alpha$) can be recovered from the volume of leaves in a canonical $3$-dimensional foliation of $C_\alpha$. \vs

In \S \ref{sec:geo} we prove a stronger form of rigidity for Margulis cusps:

\begin{namedtheorem}[B]\label{rig}
Suppose there is a bi-Lipschitz embedding $C_\alpha \hookrightarrow C_\beta$ for some irrationals $\alpha,\beta$. Then $\alpha=\beta$.
\end{namedtheorem}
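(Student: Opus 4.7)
The plan is to lift $f$ to an equivariant bi-Lipschitz map between the Margulis regions and then use the arithmetic characterization of $\bb_\alpha$ developed in the earlier sections to recover $\alpha$ as a bi-Lipschitz invariant. First, $f$ induces a homomorphism $f_*\colon\pi_1(C_\alpha)\cong\ZZ\to\pi_1(C_\beta)\cong\ZZ$, i.e.\ multiplication by some integer $n$. If $n=0$, every lift $\tilde f\colon T_\alpha\to T_\beta$ would be $\langle g_\alpha\rangle$-invariant, so $\tilde f(x)=\tilde f(g_\alpha x)$ while $d_{\HH^4}(x,g_\alpha x)>0$, contradicting the bi-Lipschitz injectivity of $\tilde f$. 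Hence $n\neq 0$, and one obtains a bi-Lipschitz lift $\tilde f\colon T_\alpha\to T_\beta$ satisfying $\tilde f\circ g_\alpha=g_\beta^n\circ\tilde f$.

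Next, applying the bi-Lipschitz inequality to the pair $(x,g_\alpha^k x)$ and using equivariance yields, for every $x\in T_\alpha$ and nonzero $k\in\ZZ$,
$$
\frac{1}{L}\,d_{\HH^4}(x,g_\alpha^k x)\ \le\ d_{\HH^4}\bigl(\tilde f(x),g_\beta^{nk}\tilde f(x)\bigr)\ \le\ L\,d_{\HH^4}(x,g_\alpha^k x),
$$
where $L$ is the bi-Lipschitz constant of $f$. Since $u_{\alpha,k}(r)$ is exactly the $t$-level at which the displacement of $g_\alpha^k$ equals $\ve$, this estimate matches the family of graphs $\{u_{\alpha,k}\}_k$ read off at the tube radius of $x$ with the family $\{u_{\beta,nk}\}_k$ read off at the tube radius of $\tilde f(x)$, up to uniform vertical and horizontal distortion. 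In particular, $\tilde f$ carries $\bd T_\alpha$ within a uniformly bounded hyperbolic neighborhood of $\bd T_\beta$, and the tube radius $r$ is preserved up to a multiplicative constant depending only on $L$.

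To extract $\alpha$ from this profile comparison, I would use that $\bb_\alpha$ is the pointwise infimum of the $u_{\alpha,k}$ and that the present indices form a subsequence of the denominators of the continued fraction convergents of $\alpha$ (\lemref{paq}, \corref{sufi}), characterized arithmetically in the Appendix. The profile matching above forces the pattern of present indices on the $\alpha$-side to correspond, under $k\mapsto nk$ and up to finite omission, to the pattern on the $\beta$-side (taking into account that $\tilde f$ is bi-Lipschitz onto $T_\beta$ rather than onto the $\langle g_\beta^n\rangle$-Margulis region, which lets one ignore indices outside $n\ZZ$ at the cost of a finite correction). The arithmetic characterization then identifies the tails of the continued fraction expansions of $\alpha$ and $n\beta\pmod 1$, giving $\alpha\equiv n\beta\pmod\ZZ$. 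Running the symmetric argument with $\tilde f$ regarded as a bi-Lipschitz map onto its image yields $\beta\equiv m\alpha\pmod\ZZ$ for some nonzero $m$, so $nm=\pm 1$ and $n,m\in\{\pm1\}$. The residual sign ambiguity $\alpha\equiv\pm\beta\pmod\ZZ$ is absorbed by the isometric identification $C_\alpha\cong C_{-\alpha}$ via the reflection $(r,\theta,z,t)\mapsto(r,-\theta,z,t)$, whence $\alpha=\beta$.

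The central difficulty lies in the third step: upgrading the coarse bi-Lipschitz comparison of displacement profiles to a genuine match of the combinatorial pattern of present indices that encodes $\alpha$. This relies on the interval-level analysis of the $u_{\alpha,k}$ developed in Sections \ref{sec:ca}--\ref{sec:aa} together with the arithmetic characterization of presence in the Appendix. A subsidiary technical point is checking that $\tilde f$ is globally $L$-bi-Lipschitz (not merely locally), which follows from the length-metric structure on the Margulis regions and the local isometry of the covering projections $T_\alpha\to C_\alpha$ and $T_\beta\to C_\beta$.
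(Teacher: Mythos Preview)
Your proposal takes a very different route from the paper, and its central third step has a genuine gap that the material you cite cannot fill. The paper reduces Theorem~B to \thmref{a=b} by lifting to an equivariant bi-Lipschitz embedding $\varphi\colon T_\alpha\hookrightarrow\HH^4$, and then proves \thmref{a=b} by a direct dynamical argument making no use whatsoever of \S\S\ref{sec:ca}--\ref{sec:aa} or the Appendix. The key input is \lemref{ab} (essentially the structure of the closed subgroup $\ov{\langle(\alpha,\beta)\rangle}\subset\TT^2$): if $\alpha\neq\beta$ one finds integers $n_i\to\infty$ with $\|n_i\alpha\|\to 0$ but $\|n_i\beta\|\not\to 0$. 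Choosing points $x_i\in\bd T_{\alpha,1}$ with radii $r_i\gg n_i$ gives $\rho(g_\alpha^{n_i}x_i,x_i)\to 0$; the conjugacy relation and the bi-Lipschitz bound then force $\rho(g_\beta^{n_i}\varphi(x_i),\varphi(x_i))\to 0$, and \lemref{nbd} (your displacement comparison, but applied only for $j=1$) converts this into $\|n_i\beta\|\to 0$, a contradiction.

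Your approach instead tries to recover $\alpha$ from the combinatorial pattern of present indices, and this is where it breaks down. The assertion that the tube radius $r$ is preserved up to a multiplicative constant is unsupported: nothing in the equivariance forces $\tilde f$ to respect distance to the totally geodesic half-plane $\{r=0\}$, and without this you cannot meaningfully compare the profiles $r\mapsto u_{\alpha,k}(r)$ and $r'\mapsto u_{\beta,nk}(r')$. Even granting such a comparison, presence of a given $q_m$ is decided by strict inequalities among the $u_{\alpha,q_m}$ that can be arbitrarily delicate (the near-miss case of \lemref{tric}), and a bounded multiplicative distortion can flip them; you give no mechanism to prevent this. Moreover, the Appendix provides only a \emph{partial} arithmetic characterization of presence (several cells in \figref{TAB} are left undetermined), so even exact knowledge of which $q_m$ are present would not determine the continued fraction of $\alpha$. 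Finally, your symmetric argument does not run as stated, since $\tilde f^{-1}$ is defined only on $\tilde f(T_\alpha)$, which is not a Margulis region for any screw translation.
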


\noindent
This follows from the corresponding statement on the universal covers which asserts that if $\varphi: T_\alpha \hookrightarrow \HH^4$ is a bi-Lipschitz embedding which satisfies $\varphi \circ g_{\alpha}=g_{\beta} \circ \varphi$, then $\alpha=\beta$ (\thmref{a=b}). The proof of this result is based on comparing the return maps of carefully chosen iterates of $g_\alpha, g_\beta$ on certain subvarieties of $\HH^4$ defined by the functions $u_{\alpha,j}$ as one goes out to $\infty$. As a special case, we recover a result of Kim in \cite{Ki} on global conjugacies between irrational screw translations (\corref{kim}). \vs

Our study of the Margulis region provides an example of the rich and non-trivial phenomena that even the simplest infinite volume hyperbolic manifolds can exhibit. The results presented here have analogs in dimensions $>4$ which are currently being investigated by us and will be the subject of a sequel paper. To demonstrate the application of these ideas in related problems, in \cite{EZ} we use boundary functions of Margulis regions to prove a discreteness criterion for subgroups of $\Isom$ which contain a parabolic isometry. This can be viewed as a generalization of the well-known results of Shimizu-Leutbecher \cite{Sh} and J{\o}rgensen \cite{J} in dimensions $2$ and $3$, and gives a sharper asymptotic bound than the previously known results such as Waterman's inequality in \cite{W}. \vs \vs

\noindent
{\it Acknowledgements}. We are grateful to Ara Basmajian for sharing his knowledge and lending his support at various stages of this project. We also thank Perry Susskind for useful conversations on the topics discussed here.

\section{Preliminaries}
\label{sec:p}

Much of the following is valid for hyperbolic spaces of arbitrary dimension, but for simplicity we focus on the $4$-dimensional case only. Further details on the subject can be found in \cite{A2}, \cite{BP}, \cite{R}, and \cite{Th}.

\subsection*{Isometries of $\HH^4$}

We will use the upper half-space model for the hyperbolic space $\HH^4$:
$$
\HH^4 = \{ x = (v,t) : v \in \RR^3, t>0 \} \subset \RR^4.
$$
The extended boundary $\bd \HH^4 = \RR^3 \cup \{ \infty \}$ is homeomorphic to the $3$-sphere, with the closure $\ov{\HH} \ \! ^4= \HH^4 \cup \bd \HH^4$ homeomorphic to the closed $4$-ball. The hyperbolic metric $dx^2/t^2$ on $\HH^4$ induces the distance $\rho(\cdot,\cdot)$ which satisfies
\begin{equation}\label{dist}
\cosh(\rho(x,\hat{x}))=1+\frac{\| x-\hat{x} \|^2}{2t \hat{t}} \qquad (x,\hat{x} \in \HH^4).
\end{equation}
Here $\| \cdot \|$ is the Euclidean norm in $\RR^4$ and $t,\hat{t}$ are the last coordinates of $x,\hat{x}$. We denote by $\isom$ the group of orientation-preserving isometries of $\HH^4$ with respect to the hyperbolic metric. It is well known that every element of $\isom$ extends continuously to a M\"{o}bius map acting on $\bd \HH^4$. Conversely, the Poincar\'e extension of every M\"{o}bius map of $\bd \HH^4$ is an element of $\isom$. It follows that $\isom$ is canonically isomorphic to the group $\mob(3)$ of orientation-preserving M\"{o}bius maps acting on the $3$-sphere. For each $\gamma \in \isom$, the fixed point set $\fix(\gamma)=\{ x \in \ov{\HH} \ \! ^4 : \gamma(x)=x \}$ is non-empty. A non-identity $\gamma$ is {\bit elliptic} if $\fix(\gamma)$ intersects $\HH^4$, {\bit loxodromic} if $\fix(\gamma)$ consists of two distinct points on $\bd \HH^4$, and {\bit parabolic} if $\fix(\gamma)$ consists of a unique point on $\bd \HH^4$. The three cases exhaust all possibilities. Elliptic isometries will not be discussed in this paper. \vs

Every loxodromic $\gamma \in \isom$ is conjugate to the normal form
\begin{equation}\label{normalform0}
(v,t) \mapsto (\lambda Av, \lambda t) \qquad (v \in \RR^3,t>0)
\end{equation}
fixing $0, \infty \in \bd \HH^4$, where $A \in \text{SO}(3)$ and $\lambda >1$. The number $\lambda$ and the conjugacy class of $A$ in $\text{SO}(3)$ are uniquely determined by $\gamma$. The map \eqref{normalform0} acts as hyperbolic translation by $\ell(\gamma)=\log \lambda >0$ on the vertical geodesic $\{ (0,t) , t >0 \}$ joining $0$ and $\infty$. It follows that $\gamma$ acts as hyperbolic translation by $\ell(\gamma)$ on the geodesic which joins its pair of fixed points. We call this geodesic the {\bit axis} of $\gamma$. \vs

Every parabolic $\gamma \in \isom$ is conjugate to the normal form
\begin{equation}\label{normalform1}
(v,t) \mapsto (Av+a,t) \qquad (v \in \RR^3,t>0)
\end{equation}
fixing $\infty \in \bd \HH^4$, where $A \in \text{SO}(3)$ and $a \in \RR^3$ is non-zero. The conjugacy class of $A$ in $\text{SO}(3)$ is uniquely determined by $\gamma$. If $A \neq I$, there is a decomposition $\RR^3 = E \oplus E^{\perp}$ into the $1$-dimensional subspace $E$ on which $A$ acts as the identity, and its orthogonal complement $E^{\perp}$ on which $A$ acts as a rotation. Thus, $E$ can be thought of as the axis of this rotation. The map $A-I$ has kernel $E$ and maps $E^{\perp}$ isomorphically onto itself. Since the map \eqref{normalform1} has no fixed point in $\RR^3$, it follows that $a \notin E^{\perp}$. After a further conjugation by a Euclidean translation, we may therefore arrange $a \in E$, or $Aa=a$. We call $\gamma$ a {\bit pure translation} if $A=I$, and a {\bit screw translation} if $A \neq I$. A screw translation is {\bit rational} if $A$ has finite order, and is {\bit irrational} otherwise. These correspond to the cases where the angle of rotation of $A$ about its axis is a rational or irrational multiple of $2\pi$. \vs

Commuting elements of $\isom$ are closely linked: If $\gamma,\eta$ are non-identity and non-elliptic with $\gamma \eta= \eta \gamma$, then $\fix(\gamma)=\fix(\eta)$. It follows that $\gamma,\eta$ are either loxodromics with a common axis, or parabolics with a common fixed point.

\subsection*{The Margulis region}

Fix some $\ve>0$. For every $\gamma \in \isom$ consider the open set
$$
T_\gamma = \{ x \in \HH^4: \rho(\gamma(x),x)<\ve \}.
$$
Convexity of the function $x \mapsto \rho(\gamma(x),x)$ on $\HH^4$ \cite[Theorem 2.5.8]{Th} shows that $T_\gamma$ is always convex. The relation
\begin{equation}\label{inv}
g(T_\gamma) = T_{g \gamma g^{-1}} \qquad \text{for all} \ \gamma, g \in \isom
\end{equation}
follows immediately from the definition. \vs

Suppose $\gamma$ is loxodromic of the form \eqref{normalform0} with fixed points at $0,\infty$. By \eqref{dist}, if $x=(v,t) \in \HH^4$,
$$
\cosh(\rho(\gamma(x),x))=1+\frac{\| \lambda Av-v \|^2 + (\lambda-1)^2 t^2}{2 \lambda t^2} \geq 1+ \frac{(\lambda-1)^2}{2\lambda}= \cosh( \log \lambda ).
$$
It follows that $T_\gamma = \es$ if $\ell(\gamma) \geq \ve$. On the other hand, if $\ell(\gamma) < \ve$, the same formula applied to $x=(0,t)$ gives
$$
\cosh(\rho(\gamma(x),x))=1+\frac{(\lambda-1)^2 t^2}{2 \lambda t^2} = \cosh( \log \lambda) < \cosh (\ve),
$$
so $x \in T_\gamma$. This shows that $T_\gamma$ is an open neighborhood of the axis of $\gamma$ (more precisely, it is an equidistant neighborhood of the axis of $\gamma$, but we do not need this property). \vs

Now suppose $\gamma$ is parabolic fixing $\infty$, so it has the normal form \eqref{normalform1}. By \eqref{dist}, if $x=(v,t) \in \HH^4$,
$$
\cosh(\rho(\gamma(x),x))=1+\frac{\| (A-I)v+a \|^2}{2t^2},
$$
which shows $x \in T_\gamma$ for each fixed $v$ if $t$ is sufficiently large. Note that in the case of a pure translation $(v,t) \mapsto (v+a,t)$, the above formula shows that $x=(v,t) \in T_\gamma$ if and only if $t> \| a \| / \sqrt{2\cosh(\ve)-2}$, which describes a horoball based at $\infty$. \vs

These observations, combined with \eqref{inv}, prove the following

\begin{lemma}\label{tog}
\mbox{}
\begin{enumerate}
\item[(i)]
If $\gamma$ is loxodromic, then $T_\gamma$ is a convex neighborhood of the axis of $\gamma$ when $\ell(\gamma) < \ve$, and $T_\gamma = \es$ when $\ell(\gamma) \geq \ve$. In particular, if $\gamma, \eta$ are loxodromics with a common axis and $\ell(\gamma), \ell(\eta)$ are both $<\ve$, then $T_\gamma \cap T_\eta \neq \es$. \vs
\item[(ii)]
If $\gamma$ is parabolic, then $T_\gamma$ is a convex domain having the fixed point of $\gamma$ on its boundary. Every geodesic landing at this fixed point eventually enters $T_\gamma$. In particular, if $\gamma, \eta$ are parabolics with a common fixed point, then $T_\gamma \cap T_\eta \neq \es$.
\end{enumerate}
\end{lemma}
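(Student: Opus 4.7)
The plan is that virtually all of the substantive work already appears in the paragraphs immediately preceding the statement, and the proof amounts to packaging those computations via the conjugation rule \eqref{inv} and Thurston's convexity of the displacement function.

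For part (i), I would first use \eqref{inv} to reduce to the case when $\gamma$ is in the normal form \eqref{normalform0}, with axis $L = \{(0,t) : t > 0\}$. The inequality
$$
\cosh \rho(\gamma(x),x) \geq \cosh \ell(\gamma)
$$
computed just above then forces $T_\gamma = \es$ whenever $\ell(\gamma) \geq \ve$. When $\ell(\gamma) < \ve$, equality is attained along $L$, so $L \subset T_\gamma$; combined with openness and with the convexity of the displacement function $x \mapsto \rho(\gamma(x),x)$, this makes $T_\gamma$ a convex open neighborhood of $L$. The ``in particular'' clause is immediate, since two loxodromics with a common axis have that axis contained in both $T_\gamma$ and $T_\eta$.

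For part (ii), I would apply \eqref{inv} to conjugate $\gamma$ to the normal form \eqref{normalform1} fixing $\infty$. The identity
$$
\cosh \rho(\gamma(x),x) = 1 + \frac{\|(A-I)v+a\|^2}{2t^2}
$$
already computed shows that for each fixed $v \in \RR^3$ the point $x = (v,t)$ lies in $T_\gamma$ for every sufficiently large $t$, and that the threshold $t$-value can be chosen uniformly over compact subsets of $\RR^3$. Hence $T_\gamma$ contains a vertical ray above every $v$, which gives at once that $T_\gamma$ is non-empty, that $\infty \in \bd T_\gamma$ (it is a boundary point of $\HH^4$ in the closure of $T_\gamma$), and that any geodesic landing at $\infty$ — whose projection to $\RR^3$ remains bounded while its $t$-coordinate escapes to $\infty$ — eventually enters $T_\gamma$. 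Convexity of $T_\gamma$ is again supplied by Thurston. For the final ``in particular'' clause, conjugate so that the common fixed point of $\gamma,\eta$ is $\infty$ (both $\gamma$ and $\eta$ acquire the form \eqref{normalform1}, with possibly different $A$ and $a$); the uniform threshold argument then applies simultaneously to both, and any sufficiently tall horoball $\{t > M\}$ lies in $T_\gamma \cap T_\eta$.

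There is essentially no serious obstacle: the two non-trivial inputs, namely the convexity of the displacement function and the explicit distance formula \eqref{dist}, are either cited or already in hand. The only point one must be careful with is the uniformity of the threshold $t$-value over compact sets of $v$-values in part (ii); this is what lets the conclusion extend from vertical geodesics to all geodesics landing at the parabolic fixed point, and is what makes the horoball in the joint statement work for both isometries at once.
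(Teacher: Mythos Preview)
Your approach is the same as the paper's: the lemma is essentially a summary of the computations already carried out in the preceding paragraphs, transported via \eqref{inv} and Thurston's convexity of the displacement function. The loxodromic case and most of the parabolic case are handled correctly.

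There is one genuine slip at the very end. You conclude the parabolic ``in particular'' clause by asserting that a sufficiently tall horoball $\{t > M\}$ lies in $T_\gamma \cap T_\eta$. This is false in general: when $\gamma$ is an irrational screw translation, $T_\gamma$ contains \emph{no} horoball whatsoever, since $\|(A-I)v + a\|$ is unbounded in $v$ and hence the threshold $t$-value blows up as $\|v\| \to \infty$. (This failure is precisely the phenomenon motivating the entire paper.) Your own earlier remark that the threshold is uniform only over \emph{compact} subsets of $\RR^3$ already signals this. The fix is immediate: take a single vertical geodesic $\{(v_0,t): t>0\}$; it eventually enters $T_\gamma$ and eventually enters $T_\eta$, and since the displacement along this geodesic is monotone decreasing in $t$ for each of $\gamma,\eta$, the two tails overlap.
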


Let $\Gamma$ be a discrete subgroup of $\isom$ acting freely on $\HH^4$ (thus, there are no elliptics in $\Gamma$), and let $M$ denote the quotient manifold $\HH^4/\Gamma$. The {\bit thin part} of $M$, denoted by $\mt$, is the set of points in $M$ at which the injectivity radius is $< \ve/2$. Equivalently, $\mt$ is the set of points in $M$ through which a geodesic loop of hyperbolic length $< \ve$ passes. Under the canonical projection $\HH^4 \to M$, the thin part lifts to the union
\begin{equation}\label{tg}
T(\Gamma) = \bigcup_{\gamma \in \Gamma \sm \{ \text{id} \}} T_\gamma.
\end{equation}
The structure of the set $T(\Gamma)$, hence $\mt$, can be understood when $\ve$ is sufficiently small. The key tool is the following special case of a result due to Zassenhaus and Kazhdan-Margulis, often known as the ``Margulis Lemma:'' There is a universal constant $\ve_4>0$ (called the {\bit Margulis constant} of $\HH^4$) such that if $0< \ve \leq \ve_4$ and $x \in \HH^4$, the group $\Gamma^x$ generated by $\{ \gamma \in \Gamma : x \in T_\gamma \}$ is {\bit virtually abelian} in the sense that it has an abelian subgroup of finite index \cite{BP}. In particular, any two elements of $\Gamma^x$ have powers that commute with one another. \vs

The following is a converse to \lemref{tog}:

\begin{lemma}\label{tng}
Let $0 < \ve \leq \ve_4$. Suppose $T_\gamma \cap T_\eta \neq \es$ for some $\gamma,\eta \in \Gamma \sm \{ \operatorname{id} \}$. Then $\gamma,\eta$ are either loxodromics having a common axis, or parabolics having a common fixed point.
\end{lemma}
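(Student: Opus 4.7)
The plan is to combine the hypothesis with the Margulis lemma to force $\gamma$ and $\eta$ to have a common fixed point set on $\ov{\HH} \ \! ^4$, and then read off the conclusion from the classification.

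First I would pick any $x \in T_\gamma \cap T_\eta$. By the definition of the Margulis region, both $\gamma$ and $\eta$ move $x$ a hyperbolic distance less than $\ve$, so by definition they both belong to the subgroup $\Gamma^x$ generated by the elements of $\Gamma$ with this property. Since $\ve \leq \ve_4$, the Margulis lemma stated in the preceding paragraph applies: $\Gamma^x$ is virtually abelian, and in particular $\gamma$ and $\eta$ admit positive powers $\gamma^m$ and $\eta^n$ that commute.

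Next I would observe that $\gamma^m$ and $\eta^n$ are themselves non-identity and non-elliptic elements of $\Gamma$. Non-ellipticity is automatic because $\Gamma$ acts freely on $\HH^4$; and from the normal forms \eqref{normalform0}, \eqref{normalform1} one sees that every loxodromic or parabolic isometry has infinite order, so no positive power of $\gamma$ or $\eta$ is trivial. Since these non-elliptic powers commute, the fact recalled at the end of the Isometries subsection gives
$$
\fix(\gamma^m) = \fix(\eta^n).
$$
But $\fix(\gamma^m) = \fix(\gamma)$: reading the normal form \eqref{normalform0} shows that any positive power of a loxodromic has the same pair of boundary fixed points and no interior fixed point, and reading \eqref{normalform1} shows that any positive power of a parabolic still fixes only the same boundary point (the kernel of the linear part stays the same, and the translational part is non-zero because $\gamma^m$ is non-identity). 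The same applies to $\eta$, so $\fix(\gamma) = \fix(\eta)$.

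Finally I would conclude from the cardinalities: a loxodromic has two boundary fixed points and a parabolic has one, so $\gamma$ and $\eta$ must be of the same type; if both are loxodromic they share both endpoints, hence the axis; if both are parabolic they share their unique fixed point on $\bd \HH^4$. The only step that requires care is the passage from ``commuting powers'' to ``equal fixed-point sets of $\gamma$ and $\eta$ themselves,'' which is where I would spend the most attention, making sure that the powers extracted from the Margulis lemma are genuinely non-trivial and that taking powers preserves fixed point sets in both the loxodromic and parabolic cases.
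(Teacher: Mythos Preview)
Your argument is correct and follows essentially the same route as the paper: pick $x\in T_\gamma\cap T_\eta$, use the Margulis lemma to get commuting positive powers, and deduce $\fix(\gamma)=\fix(\gamma^m)=\fix(\eta^n)=\fix(\eta)$. The paper's proof is terser, leaving implicit the verifications (non-triviality of the powers, preservation of fixed point sets under powers) that you have carefully spelled out.
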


\begin{proof}
Take an $x \in T_\gamma \cap T_\eta$. Since $\gamma, \eta$ belong to the virtually abelian group $\Gamma^x$, some positive powers $\gamma^n$ and $\eta^m$ must commute. This implies $\text{Fix}(\gamma)=\text{Fix}(\gamma^n)=\text{Fix}(\eta^m)=\text{Fix}(\eta)$, from which the result follows immediately.
\end{proof}

From now on we fix an $\ve$ such that $0<\ve \leq \ve_4$. It follows from \lemref{tog} and \lemref{tng} that each connected component of $T(\Gamma)$ in \eqref{tg} is a union of the $T_\gamma$, where $\gamma$ runs over all loxodromics in $\Gamma$ having a common axis, or all parabolics in $\Gamma$ having a common fixed point. The component is called loxodromic or parabolic accordingly. In this paper, we are only interested in parabolic components of $T(\Gamma)$. \vs

If $p \in \bd \HH^4$ is a parabolic fixed point of $\Gamma$, the stabilizer
$$
\Gamma_p = \{ \gamma \in \Gamma: \gamma(p)=p \}
$$
is a maximal parabolic subgroup of $\Gamma$ (it contains no loxodromic since a loxodromic and a parabolic element sharing a fixed point would generate a non-discrete group \cite[Lemma D.3.6]{BP}). By the preceding remarks, the domain
\begin{equation}\label{ma}
T(\Gamma_p) = \bigcup_{\gamma \in \Gamma_p \sm \{ \text{id} \}} T_\gamma
\end{equation}
is a connected component of $T(\Gamma)$. We call $T(\Gamma_p)$ the {\bit Margulis region} associated with the parabolic fixed point $p$.

\begin{lemma}\label{mar}
The Margulis region $T=T(\Gamma_p)$ is precisely invariant under the action of $\Gamma_p$, in the sense that 
$$
\begin{cases}
g(T)=T & \qquad \text{if} \ \ g \in \Gamma_p \\
g(T) \cap T =\es & \qquad \text{if} \ \ g \in \Gamma \sm \Gamma_p.
\end{cases}
$$
\end{lemma}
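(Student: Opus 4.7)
The plan is to prove the two statements directly by unwinding the definition \eqref{ma} of $T$ and invoking the conjugation identity \eqref{inv} together with \lemref{tng}.

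\textbf{Invariance.} For the first clause, fix $g \in \Gamma_p$. Using \eqref{inv} and the fact that $\Gamma_p$ is a subgroup,
\[
g(T) \;=\; \bigcup_{\gamma \in \Gamma_p \sm \{\text{id}\}} g(T_\gamma) \;=\; \bigcup_{\gamma \in \Gamma_p \sm \{\text{id}\}} T_{g\gamma g^{-1}}.
\]
Since $\gamma \mapsto g\gamma g^{-1}$ is a bijection of $\Gamma_p \sm \{\text{id}\}$ onto itself, this last union is again $T$.

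\textbf{Precise invariance.} For the second clause, argue by contradiction: suppose $g \in \Gamma \sm \Gamma_p$ and $g(T) \cap T \neq \es$. Then there exist $\gamma_1, \gamma_2 \in \Gamma_p \sm \{\text{id}\}$ with $g(T_{\gamma_1}) \cap T_{\gamma_2} \neq \es$, which by \eqref{inv} reads $T_{g\gamma_1 g^{-1}} \cap T_{\gamma_2} \neq \es$. Now $\gamma_1, \gamma_2$ lie in the maximal parabolic subgroup $\Gamma_p$, which by the remark preceding \eqref{ma} contains no loxodromic element; hence both are parabolic with fixed point $p$, and $g\gamma_1 g^{-1}$ is parabolic with the single fixed point $g(p)$. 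Applying \lemref{tng} to the pair $(g\gamma_1 g^{-1}, \gamma_2)$ forces them to share a fixed point, so $g(p)=p$, i.e.\ $g \in \Gamma_p$, a contradiction.

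\textbf{Expected difficulty.} The argument is essentially bookkeeping, so I do not anticipate a genuine obstacle. The only point requiring a bit of care is ensuring that the alternative in \lemref{tng} (loxodromics with a common axis) is ruled out; this is handled by the observation that $\Gamma_p$ is a maximal parabolic subgroup free of loxodromics, so every $\gamma \in \Gamma_p \sm \{\text{id}\}$, as well as its $\Gamma$-conjugates, is parabolic and thus has a unique boundary fixed point. Everything else is a direct translation between $T$ and the parabolic fixed point $p$ via the identity $g(T_\gamma) = T_{g\gamma g^{-1}}$.
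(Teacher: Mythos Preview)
Your proof is correct and follows essentially the same route as the paper's: both arguments unwind the definition of $T$ via the conjugation identity \eqref{inv} and then appeal to \lemref{tng} to force a shared parabolic fixed point, reaching a contradiction with $g\notin\Gamma_p$. The only cosmetic difference is that the paper phrases the second clause contrapositively (first noting $g\gamma g^{-1}\notin\Gamma_p$, then concluding disjointness), whereas you argue by direct contradiction; the content is the same.
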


\begin{proof}
If $\gamma \in \Gamma_p \sm \{ \text{id} \}$ and $g \in \Gamma_p$, \eqref{inv} shows that $g(T_\gamma) \subset T$. Taking the union over all such $\gamma$ gives $g(T) \subset T$. The same argument applied to $g^{-1}$ then shows $g(T)=T$. If $\gamma \in \Gamma_p \sm \{ \text{id} \}$ and $g \notin \Gamma_p$, then $g \gamma g^{-1} \notin \Gamma_p$, so by \eqref{inv} and \lemref{tng}, $g(T_\gamma) \cap T = \es$. Taking the union over all such $\gamma$ then proves $g(T) \cap T = \es$.
\end{proof}

The precise invariance of the Margulis region shows that the quotient $T(\Gamma_p)/\Gamma_p$ embeds isometrically into the hyperbolic manifold $M=\HH^4/\Gamma$ and forms a connected component of $\mt$. We call $T(\Gamma_p)/\Gamma_p$ the {\bit Margulis cusp} of $M$ associated with $p$ (more precisely, associated with the conjugacy class of $\Gamma_p$ in $\Gamma$).

\subsection*{Parabolic stabilizers}

We continue assuming that $\Gamma \subset \isom$ is a discrete group acting freely on $\HH^4$ and $p$ is a parabolic fixed point of $\Gamma$. We wish to show that the stabilizer subgroup $\Gamma_p$ has a simple algebraic structure in the presence of an irrational screw translation. Without loss of generality we may assume $p=\infty$. By the normal form \eqref{normalform1}, $\Gamma_\infty$ is isomorphic to a discrete subgroup of the group of orientation-preserving Euclidean isometries $v \mapsto Av+a$ of $\RR^3$. According to a classical theorem of Bieberbach, any such group is virtually abelian \cite[Theorem 4.2.2]{Th}.

\begin{lemma}\label{2,2}
Suppose $\gamma: (v,t) \mapsto (Av+a,t)$ and $\eta: (v,t) \mapsto (Bv+b,t)$ are commuting parabolics in $\isom$, with $Aa=a$. Then $AB=BA$ and $Ab=b$. If $A,B \neq I$, then $A,B$ have a common axis of rotation.
\end{lemma}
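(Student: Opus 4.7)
The plan is to combine a direct composition calculation with a short analysis of commuting rotations in $\text{SO}(3)$. First I compute
$$
\gamma\eta(v,t) = (ABv + Ab + a,\, t), \qquad \eta\gamma(v,t) = (BAv + Ba + b,\, t),
$$
and equate them to extract the linear-part identity $AB = BA$ and the translational identity $(A - I)b = (B - I)a$. These are the only algebraic consequences of $\gamma\eta = \eta\gamma$ that will be used.

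If $A = I$, the conclusion $Ab = b$ is trivial and the common-axis claim is vacuous, so I would assume $A \neq I$. Set $E_A := \ker(A - I)$, a one-dimensional subspace, and note that $\operatorname{image}(A - I) = E_A^\perp$. Since $AB = BA$, the rotation $B$ preserves $E_A$, hence acts on it as $+1$ or $-1$. In the $-1$ case, using $a \in E_A$ one has $(B - I)a = -2a \in E_A$; but the translational identity forces this to equal $(A - I)b$, which lies in $E_A^\perp$, giving $-2a \in E_A \cap E_A^\perp = \{0\}$, contradicting $a \neq 0$ (parabolicity of $\gamma$). Thus $B$ acts as $+1$ on $E_A$, i.e., $E_A \subseteq \ker(B - I) = E_B$. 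If $B \neq I$ as well, then $E_B$ is also one-dimensional, forcing $E_A = E_B$ and yielding the common-axis claim. In either case $Ba = a$, so the translational identity gives $(A - I)b = 0$, i.e., $Ab = b$.

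The main obstacle, in my view, is that two commuting nontrivial rotations of $\RR^3$ need not share an axis in general -- two half-turns about perpendicular lines provide a familiar counterexample, and they show that the axis conclusion is not purely group-theoretic. What salvages it is the interplay between the translational identity and the hypothesis $a \in E_A \setminus \{0\}$, which together exclude the half-turn anomaly and pin the two rotation axes to each other.
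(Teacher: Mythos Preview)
Your proof is correct and follows essentially the same route as the paper's: compute the two compositions to extract $AB=BA$ and $(A-I)b=(B-I)a$, observe that $B$ preserves the axis $E_A$ and must act on it as $\pm 1$, and rule out the $-1$ case by noting that $(A-I)b\in E_A^\perp$ while $-2a\in E_A$. Your closing remark about commuting half-turns is a nice addition, making explicit why the translational hypothesis $Aa=a$, $a\neq 0$ is really needed.
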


\begin{proof}
There is nothing to prove if $A=I$, so let us assume $A \neq I$. The condition $\gamma \eta = \eta \gamma$ implies
$$
AB=BA \qquad \text{and} \qquad Ab+a=Ba+b.
$$
If $B=I$, this gives $Ab=b$. If $B \neq I$, the condition $AB=BA$ shows that $B$ maps the axis $E$ of $A$ isomorphically onto itself. Since $B \in \text{SO}(3)$,  either $B|_E=I$ or $B|_E=-I$. In the first case, $E$ is the axis of $B$ also and $Ba=a$, so $Ab=b$. The second case is impossible since it would give $Ba=-a$ or $Ab-b=-2a$. Since $Ab-b \in E^{\perp}$ and $a \in E$, this would imply $a=0$.
\end{proof}

\begin{theorem}\label{cyclic}
If the stabilizer subgroup $\Gamma_\infty$ contains an irrational screw translation, it must be an infinite cyclic group.
\end{theorem}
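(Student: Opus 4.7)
The plan is to prove that every element of $\Gamma_\infty$ is a screw motion along the rotation axis $L \subset \RR^3$ of $\gamma$, so that $\Gamma_\infty$ sits as a discrete torsion-free subgroup of the abelian Lie group $G_L$ of such screw motions, and then conclude $\Gamma_\infty \cong \ZZ$ via a rank count on $G_L \cong \RR \times \RR/\ZZ$.

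Starting from the normal form $\gamma: v \mapsto Av + a$ with $Aa = a$ and $A$ an irrational rotation about $L$: by Bieberbach's theorem, the discrete group $\Gamma_\infty \subset \operatorname{Isom}^+(\RR^3)$ is virtually abelian. Fix an abelian finite-index subgroup $H \subset \Gamma_\infty$, and after replacing $\gamma$ by $\gamma^{[\Gamma_\infty : H]}$ (still an irrational screw with axis $L$), assume $\gamma \in H$. For every $\eta \in H$, \lemref{2,2} applied to the commuting pair $(\gamma, \eta)$ forces the rotation part $B$ of $\eta$ to commute with $A$ in $\operatorname{SO}(3)$ and the translation of $\eta$ to lie in $L$. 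Since the $\operatorname{SO}(3)$-centralizer of the infinite-order rotation $A$ is exactly the circle of rotations about $L$, such a $B$ is itself a rotation about $L$. Hence $H \subset G_L$.

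To upgrade to $\Gamma_\infty \subset G_L$: pick any $\eta = (B, b) \in \Gamma_\infty$ and choose $m$ with $\eta^m \in H$. If $B^m$ has infinite order, then $B \in C_{\operatorname{SO}(3)}(B^m)$ is a rotation about $L$. If $B^m$ has finite order, then $B$ has finite order $d$ with some rotation axis $L_B$, and the power $\eta^{md}$, lying in $H \subset G_L$ and being a pure translation ($B^{md} = I$), has translation vector $md \cdot \operatorname{proj}_{L_B}(b)$; this must be along $L$, forcing $L_B = L$ (since $\operatorname{proj}_{L_B}(b) = 0$ would give $\eta^{md} = I$, i.e., torsion). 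Either way $B$ is a rotation about $L$. For the translation part, decompose $b = b_L + b_\perp \in L \oplus L^\perp$. Since $A$ and $B$ commute, a direct computation shows that the commutator $[\gamma^n, \eta]$ simplifies to the pure translation by $(A^n - I) b_\perp \in L^\perp$. If $b_\perp \neq 0$, density of the orbit $\{A^n b_\perp\}$ in a circle of $L^\perp$—which follows from $A$ being an irrational rotation—produces nontrivial elements of $\Gamma$ accumulating at the identity, contradicting discreteness. Thus $b_\perp = 0$ and $\eta \in G_L$.

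Having established $\Gamma_\infty \subset G_L \cong \RR \times \RR/\ZZ$, the group $\Gamma_\infty$ is a torsion-free discrete subgroup of a $2$-dimensional abelian Lie group. Lifting via the universal cover $\pi: \RR^2 \to \RR \times \RR/\ZZ$, $(t, \theta) \mapsto (t, \theta \bmod 1)$, the preimage $\tilde{\Gamma} := \pi^{-1}(\Gamma_\infty)$ is a discrete subgroup of $\RR^2$ containing $\ker\pi = \{0\} \times \ZZ$, hence a lattice of rank $1$ or $2$. Rank $1$ would give $\tilde{\Gamma} = \ZZ \cdot (0, 1/n)$ and $\Gamma_\infty \cong \ZZ/n$ finite, contradicting the infinite order of $\gamma$. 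In the rank-$2$ case, torsion-freeness of $\Gamma$ excludes any $(0, 1/n) \in \tilde{\Gamma}$ with $n \geq 2$ (such a lattice point would project to a nontrivial elliptic rotation in $\Gamma_\infty$), so $(0,1)$ is primitive in $\tilde{\Gamma}$ and can be extended to a basis; the quotient $\Gamma_\infty = \tilde{\Gamma}/\ZZ\cdot(0,1) \cong \ZZ$.

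The hardest step is the middle one, lifting $\eta^m \in G_L$ to $\eta \in G_L$: the two ingredients—using torsion-freeness to pin down the rotation axis of $B$ in the finite-order subcase, and using a commutator-plus-density argument together with discreteness to eliminate the $L^\perp$-component of $b$—are where the irrationality of the rotation angle and the discreteness of $\Gamma$ decisively enter.
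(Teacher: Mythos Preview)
Your argument is correct and follows a genuinely different path from the paper's.  The paper proceeds in two stages: it first rules out pure translations in $\Gamma_\infty$ (showing that a commuting pair of irrational screw and pure translation would force an elliptic element), and then, knowing every nontrivial element is an irrational screw, uses a telescoping-sum identity $\sum_{k=0}^{m-1} B^k(Ab-b)=Ac-c=0$ together with the fact that $B^m$ is an irrational rotation on $L^\perp$ to conclude $Ab=b$.  Finally it observes that restriction to the common axis $E$ is an injection into a discrete translation group of a line, hence cyclic.  By contrast, you never separate out pure translations: you show $H\subset G_L$ via \lemref{2,2}, then upgrade to $\Gamma_\infty\subset G_L$ by your commutator computation $[\gamma^n,\eta]=\text{(translation by }(A^n-I)b_\perp)$ combined with density of the $A$-orbit and discreteness, and you close with a lattice-rank argument in the universal cover $\RR^2\to\RR\times\RR/\ZZ$.

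Your commutator step is a clean dynamical substitute for the paper's algebraic sum identity, and it makes the role of discreteness more transparent; on the other hand the paper avoids your finite/infinite-order case split on $B^m$ by first eliminating non-irrational elements.  The paper's endgame (projection onto $E$) is somewhat shorter than your covering-space argument, but yours gives a more structural picture of where $\Gamma_\infty$ sits inside the $2$-dimensional abelian Lie group $G_L$.  One minor omission: when $B=I$ your Case~2 argument invokes an undefined axis $L_B$; this case is of course trivial (then $\eta^m$ is a pure translation in $G_L$, so $mb\in L$ and $b\in L$), but it should be mentioned separately.
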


\begin{proof}
First we show that $\Gamma_\infty$ contains no pure translations (hence no rational screw translations). Otherwise, since $\Gamma_\infty$ is virtually abelian by Bieberbach's theorem, we can find an irrational screw translation $\gamma: (v,t) \mapsto (Av+a,t)$, with $Aa=a$, which commutes with a pure translation $\eta: (v,t) \mapsto (v+b,t)$ in $\Gamma_\infty$. By \lemref{2,2}, both $a,b$ belong to the axis $E$ of $A$. The group $\langle \gamma , \eta \rangle \subset \Gamma_\infty$ is discrete and preserves $E$, so its action on $E$ must be generated by a single translation $v \mapsto v+v_0$. It follows that $a=mv_0$ and $b=nv_0$ for some non-zero integers $m,n$. This implies that the map $\gamma^n \eta^{-m}: (v,t) \mapsto (A^n v , t)$ in $\Gamma_\infty$ is elliptic, which is a contradiction. \vs

Thus, all non-identity elements of $\Gamma_\infty$ are irrational screw translations. Take two such elements $\gamma: (v,t) \mapsto (Av+a,t)$, with $Aa=a$, and $\eta : (v,t) \mapsto (Bv+b,t)$. There are powers
\begin{align*}
\gamma^n : (v,t) & \mapsto  (A^n v + na,t) \\
\eta^m : (v,t) & \mapsto (B^m v + c, t), \quad c=\sum_{k=0}^{m-1} B^k b
\end{align*}
that commute with each other. By \lemref{2,2}, $A^n$ and $B^m$ have a common axis $E$, hence the same is true of $A$ and $B$, so $AB=BA$. Moreover, $A^n c=c$, so $c \in E$, so $Ac=c$. Let $w=Ab-b \in E^{\perp}$. We have
$$
\sum_{k=0}^{m-1} B^k w = \sum_{k=0}^{m-1} B^k A b - \sum_{k=0}^{m-1} B^k b = \sum_{k=0}^{m-1} A B^k b - \sum_{k=0}^{m-1} B^k b= Ac-c=0.
$$
Applying $B$ on each side gives $\sum_{k=1}^m B^k w =0$. Subtracting the two equations, we obtain $B^m w - w=0$. Since $B^m: E^{\perp} \to E^{\perp}$ is an irrational rotation, it follows that $w=0$, or $Ab=b$. \vs

This shows that the non-identity elements of $\Gamma_\infty$ have a common axis of rotation and translation direction $E$. Let $G$ denote the restriction of $\Gamma_\infty$ to $E$. As a discrete group of translations of $E$, $G$ must be cyclic. The natural homomorphism $\Gamma_\infty \to G$ is injective since any element in its kernel must fix $E$ pointwise. Since $\Gamma_\infty$ is a parabolic group, such an element can only be the identity map. It follows that $\Gamma_\infty$ is isomorphic to the cyclic group $G$.
\end{proof}

For a different proof of \thmref{cyclic}, under the additional assumption that $\Gamma_\infty$ is abelian, see \cite{Ki}.

\subsection*{Explicit description of the Margulis region}

\thmref{cyclic} allows an explicit description of the Margulis region $T(\Gamma_\infty)$ in the presence of irrational screw translations. Let us use the coordinates $(r,\theta,z,t)$ in $\HH^4$, where $(r,\theta,z)$ are the cylindrical coordinates of $\RR^3$ and $t>0$. We assume $\theta \in \RR/\ZZ$, which amounts to measuring the polar angle in full turns rather than multiples of $2\pi$. In these coordinates, $\Gamma_\infty$ is conjugate to the group generated by the screw translation
\begin{equation}\label{ga}
g_{\alpha}:(r,\theta,z,t) \mapsto (r,\theta+\alpha,z+1,t)
\end{equation}
for a unique irrational $\alpha \in \RR/\ZZ$ called the {\bit rotation angle} of $\Gamma_\infty$. As the Margulis region $T(\Gamma_\infty)$ is now independent of the rest of the group $\Gamma$, and to emphasize its sole dependence on $\alpha$, we simplify the notation $T(\Gamma_\infty)$ to $T_\alpha$ and $T_{g_{\alpha}^j}$ to $T_{\alpha,j}$. Thus, \eqref{ma} takes the form
\begin{equation}\label{oa}
T_\alpha =  \bigcup_{j=1}^{\infty} T_{\alpha,j}.
\end{equation}
Observe that the union is taken over positive integers only since $T_{\alpha,j}=T_{\alpha,-j}$. \vs

By \eqref{dist}, the condition $x=(r,\theta,z,t) \in T_{\alpha,j}$ is equivalent to
$$
\frac{\| (r,\theta + j \alpha, z+j,t)-(r,\theta,z,t) \|^2}{2t^2} < \cosh(\ve)-1
$$
or
$$
\frac{4 \sin^2(\pi j\alpha) \ r^2+j^2}{2t^2} < \cosh(\ve)-1.
$$
Setting
\begin{equation}\label{UJ}
u_{\alpha,j}(r) = c(\ve) \, \sqrt{4 \sin^2(\pi j\alpha) \ r^2+j^2},
\end{equation}
where $c(\ve) =1/\sqrt{2\cosh(\ve)-2}$, this condition can be written as
$$
x=(r,\theta,z,t) \in T_{\alpha,j} \quad \Longleftrightarrow \quad t > u_{\alpha,j}(r).
$$
If we define the {\bit boundary function} $\bb_\alpha : [0,\infty) \to \RR$ by
\begin{equation}\label{psi}
\bb_\alpha(r)= \inf_{j \geq 1} u_{\alpha,j}(r),
\end{equation}
it follows that
$$
T_\alpha = \{ (r,\theta,z,t) \in \HH^4:  t > \bb_\alpha(r) \}.
$$

\subsection*{Continued fractions and rotations of the circle}
Our analysis of the boundary function of the Margulis region will depend on the continued fraction algorithm. Below we outline a few basic facts that are used in the next section. For a full treatment, see for example \cite{HW} or \cite{H}. \vs

Let $\TT = \RR/\ZZ$. Fix an irrational $\alpha \in \TT$ which may be identified with its unique representative in the interval $(0,1) \subset \RR$. Expand $\alpha$ into a continued fraction
$$
\alpha = \dfrac{1}{a_1+\dfrac{1}{a_2+\dfrac{1}{a_3+\dfrac{1}{\ddots}}}} = [a_1,a_2,a_3,\ldots],
$$
where the {\bit partial quotients} $a_n \in \NN$ are uniquely determined by $\alpha$. The truncated continued fractions
$$
\frac{p_n}{q_n} = [a_1,a_2,\ldots,a_n] \qquad (n \geq 1)
$$
are called the {\bit rational convergents} of $\alpha$. Setting $p_0=0, q_0=1$, they satisfy the recursions
\begin{equation}\label{pqr}
\begin{cases}
p_n = a_n \, p_{n-1} + p_{n-2} \\
q_n \, = a_n \, q_{n-1} + q_{n-2}
\end{cases}
\qquad (n \geq 2).
\end{equation}
The sequences $\{ p_n \}$ and $\{ q_n \}$ are increasing and tend to infinity
at least exponentially fast since $p_n \geq p_{n-1}+p_{n-2} > 2 p_{n-2}$ and similarly $q_n > 2q_{n-2}$. Note also that by the second recursion the knowledge of $\{ q_n \}$ will determine $\{ a_n \}$, hence $\alpha$, uniquely. \vs

The asymptotic behavior of the denominators $\{ q_n \}$ characterizes some important arithmetical classes of irrational numbers. For example, let ${\mathscr D}_{\nu}$ be the set of {\bit Diophantine numbers of exponent} $\nu \geq 2$:
$$
{\mathscr D}_{\nu} = \left\{ \alpha \in \TT : \left| \alpha - \frac{p}{q} \right| > \frac{\con}{|q|^{\nu}} \ \text{for every rational number} \ \frac{p}{q} \right\}.
$$
It is not hard to show that
$$
\alpha \in {\mathscr D}_{\nu} \Longleftrightarrow \sup _{n \geq 1} \ \frac{q_{n+1}}{q_n^{\, \nu-1}} < \infty.
$$
In particular, since $a_n < q_n/q_{n-1} < a_n+1$, we have
$$
\alpha \in {\mathscr D}_2 \Longleftrightarrow \sup_{n \geq 1} \ a_n < \infty.
$$
Because of this, Diophantine numbers of exponent $2$ are said to be of {\bit bounded type}. It is well known that ${\mathscr D}_{\nu}$ has full Lebesgue measure in $\TT$ if $\nu>2$ and zero measure if $\nu=2$. \vs

We now turn to rotations of the circle, represented as the additive group $\TT$. We equip $\TT$ with the ``norm''
$$
\| w \| = \min \{ |w-p| : p \in \ZZ \}
$$
which can be thought of as the distance from $w$ to $0$ and satisfies $\| w \| \leq 1/2$. The norm $\| w - \hat{w} \|$ serves as a natural distance between $w,\hat{w} \in \TT$. \vs

Each irrational number $\alpha \in \TT$ induces the rotation $\ra: \TT \to \TT$ defined by
$$
\ra: w \mapsto w+\alpha \qquad (\text{mod} \ \ZZ).
$$
The orbits of irrational rotations are dense in $\TT$, so the orbit $w_j= w+j \alpha \ (\text{mod} \ \ZZ)$ must return to any neighborhood of $w_0=w$ infinitely often. Let us say that an integer $q>0$ is a {\bit closest return moment} of the orbit of $w$ if
$$
\| w_q - w_0  \| = \| q \alpha \| < \| j \alpha \| = \| w_j - w_0 \| \qquad \text{whenever} \ \ 0<j<q.
$$
This simply means that $w_q$ is closer to $w_0$ than any of its predecessors in the orbit. Notice that this notion is independent of the choice of the initial point $w_0$.

\begin{theorem}[Dynamical characterization of continued fractions] \label{mom}
For every irrational number $\alpha \in \TT$, the denominators $\{ q_n \}$ of the rational convergents of $\alpha$ constitute the
closest return moments of the orbits of the rotation $\ra$.
\end{theorem}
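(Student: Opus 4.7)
The plan is to deduce the theorem from the classical \emph{best approximation of the second kind} property of convergents, which I will take as the key technical input:
\[
(\star) \qquad \text{if } 0 < q < q_{n+1} \text{ and } q \neq q_n, \text{ then } \|q\alpha\| > \|q_n\alpha\|.
\]
Since the notion of closest return moment depends only on the numbers $\|j\alpha\|$ and not on the initial point of the orbit (as already observed in the paragraph preceding the theorem), once $(\star)$ is in hand, both directions of the theorem fall out quickly.

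To show that every $q_n$ is a closest return moment, I note first that the case $n=0$ is vacuous since $q_0 = 1$. For $n \geq 1$ and any integer $0 < j < q_n$, I would split into two subcases. If $j = q_{n-1}$, then $(\star)$ applied at index $n$ with $q = q_{n-1}$ gives $\|j\alpha\| > \|q_n\alpha\|$ directly. If $j \neq q_{n-1}$, then $(\star)$ at index $n-1$ with $q = j$ gives $\|j\alpha\| > \|q_{n-1}\alpha\|$, and the previous subcase supplies $\|q_{n-1}\alpha\| > \|q_n\alpha\|$, so again $\|j\alpha\| > \|q_n\alpha\|$. For the converse, let $q > 0$ be a closest return moment and choose $n$ with $q_n \leq q < q_{n+1}$; if $q \neq q_n$ then $(\star)$ forces $\|q\alpha\| > \|q_n\alpha\|$, contradicting the closest return property of $q$ since $q_n < q$. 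Hence $q = q_n$.

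The main work, which I expect to be the primary obstacle, is establishing $(\star)$ itself. The natural route uses the recursion \eqref{pqr} together with the determinant identity $p_n q_{n-1} - p_{n-1} q_n = (-1)^n$. The latter implies that every integer pair $(p,q)$ admits a unique representation
\[
(p,q) = x(p_n, q_n) + y(p_{n-1}, q_{n-1}), \qquad x,y \in \ZZ,
\]
so that $q\alpha - p = x(q_n\alpha - p_n) + y(q_{n-1}\alpha - p_{n-1})$. The two quantities $q_n\alpha - p_n$ and $q_{n-1}\alpha - p_{n-1}$ alternate in sign, while the constraints $0 < q = x q_n + y q_{n-1} < q_{n+1} = a_{n+1} q_n + q_{n-1}$ and $(p,q) \neq (p_n, q_n)$ force $x$ and $y$ to share a sign, after a brief case analysis according to whether one of them vanishes. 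The two summands therefore add constructively in absolute value, yielding $|q\alpha - p| > |q_n \alpha - p_n| = \|q_n\alpha\|$; taking the infimum over $p$ with $q$ fixed then produces $(\star)$. The principal obstacle here is the sign/magnitude bookkeeping: one must verify that $q < q_{n+1}$ genuinely precludes destructive cancellation between the two summands, and that strict inequality is maintained in every branch of the case analysis.
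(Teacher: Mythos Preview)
The paper does not supply a proof of this theorem: it is stated as a classical fact about continued fractions, with the general references to Hardy--Wright and Herman standing in for a proof. So there is no ``paper's own proof'' to compare against; your proposal is a reasonable attempt to fill in what the paper leaves to the literature, and the reduction of the theorem to the best-approximation property $(\star)$, together with the two short deductions you give, is correct and efficient.

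That said, your sketch of $(\star)$ itself contains a genuine slip in the sign analysis. You claim that the constraints $0<q=xq_n+yq_{n-1}<q_{n+1}$ and $(p,q)\neq(p_n,q_n)$ force $x$ and $y$ to \emph{share} a sign, and that the two summands $x(q_n\alpha-p_n)$ and $y(q_{n-1}\alpha-p_{n-1})$ therefore add constructively. Both assertions are wrong. First, the constraints do not exclude opposite signs: for instance $(x,y)=(1,-1)$ gives $q=q_n-q_{n-1}\in(0,q_{n+1})$. Second, since $\theta_n=q_n\alpha-p_n$ and $\theta_{n-1}=q_{n-1}\alpha-p_{n-1}$ have \emph{opposite} signs, it is precisely when $x$ and $y$ have opposite signs that $x\theta_n$ and $y\theta_{n-1}$ share a sign and add constructively, giving $|q\alpha-p|\ge |\theta_{n-1}|>|\theta_n|$ immediately.

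The case you must still handle is when $x$ and $y$ are both nonzero and of the same sign. Both negative is impossible since then $q<0$; when both are positive, the summands partially cancel and you need one more estimate. From $xq_n+yq_{n-1}<q_{n+1}=a_{n+1}q_n+q_{n-1}$ and $y\ge 1$ you get $x\le a_{n+1}-1$. Combining this with the recursion $|\theta_{n-1}|=a_{n+1}|\theta_n|+|\theta_{n+1}|$ (which is exactly \lemref{qnp}(iii)) yields
\[
y|\theta_{n-1}|-x|\theta_n| \ \ge\ |\theta_{n-1}|-(a_{n+1}-1)|\theta_n| \ =\ |\theta_{n+1}|+|\theta_n| \ >\ |\theta_n|,
\]
so $|q\alpha-p|=\bigl|\,y|\theta_{n-1}|-x|\theta_n|\,\bigr|>|\theta_n|$ in this case as well. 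With this correction your outline becomes the standard textbook proof.
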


We will make repeated use of the following properties of the norms $\| q_n \alpha \|$. As before, $\alpha \in \TT$ is an irrational number with partial quotients $\{ a_n \}$ and rational convergents $\{ p_n/q_n \}$.

\begin{lemma}\label{qnp}
For every $n\geq 1$, \vs
\begin{enumerate}
\item[(i)]
$\| q_n \alpha \| = |q_n \alpha - p_n| = (-1)^n (q_n \alpha - p_n)$. \vs
\item[(ii)]
$\dfrac{1}{2q_{n+1}} < \| q_n \alpha \| < \dfrac{1}{q_{n+1}}$. \vs
\item[(iii)]
$\| q_n \alpha \| = a_{n+2} \| q_{n+1} \alpha \| + \| q_{n+2} \alpha \|$.
\end{enumerate}
\end{lemma}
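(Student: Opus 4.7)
The three statements are classical and follow from the standard recursive structure of convergents, so the plan is to prove them in order, using (i) to give a clean sign convention and then reducing (ii) and (iii) to the recursion \eqref{pqr} via the auxiliary quantity
$$
e_n := q_n \alpha - p_n.
$$
The main ingredient is the identity
$$
\alpha = \frac{p_n \, \alpha_{n+1} + p_{n-1}}{q_n \, \alpha_{n+1} + q_{n-1}}, \qquad \alpha_{n+1} := [a_{n+1}, a_{n+2}, \ldots] > 1,
$$
which is itself an easy consequence of \eqref{pqr} together with the definition of $\alpha_{n+1}$ as the $n$-th tail of the continued fraction. Rearranging yields the closed form
$$
e_n = \frac{(-1)^n}{q_n \, \alpha_{n+1} + q_{n-1}}.
$$

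For part (i), the closed form above immediately gives $\operatorname{sign}(e_n) = (-1)^n$, so it remains only to check that $|e_n| < 1/2$, which ensures that $p_n$ is the nearest integer to $q_n\alpha$ and hence $\|q_n\alpha\| = |e_n|$. This follows from the denominator bound
$$
q_n \, \alpha_{n+1} + q_{n-1} > q_n + q_{n-1} \geq 2
$$
for $n\geq 1$, using $\alpha_{n+1}>1$ and $q_0=1$. Combining these gives $\|q_n\alpha\| = (-1)^n e_n = (-1)^n(q_n\alpha-p_n)$.

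For part (ii), I use $a_{n+1} < \alpha_{n+1} < a_{n+1}+1$ together with the recursion $q_{n+1} = a_{n+1} q_n + q_{n-1}$ to bracket the denominator:
$$
q_{n+1} = a_{n+1} q_n + q_{n-1} < q_n \, \alpha_{n+1} + q_{n-1} < (a_{n+1}+1) q_n + q_{n-1} = q_{n+1}+q_n < 2 q_{n+1},
$$
where the last inequality uses $q_n < q_{n+1}$. Taking reciprocals and invoking the formula for $|e_n|$ gives exactly $\frac{1}{2q_{n+1}} < \|q_n \alpha\| < \frac{1}{q_{n+1}}$.

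For part (iii), I work directly with $e_n$, where the recursion \eqref{pqr} for both numerators and denominators gives
$$
e_{n+2} = q_{n+2}\alpha - p_{n+2} = a_{n+2}(q_{n+1}\alpha - p_{n+1}) + (q_n\alpha - p_n) = a_{n+2} \, e_{n+1} + e_n,
$$
hence $e_n = e_{n+2} - a_{n+2} e_{n+1}$. Multiplying by $(-1)^n$ and using part (i) to convert each $(-1)^k e_k$ into $\|q_k\alpha\|$, the signs work out so that $-(-1)^n a_{n+2} e_{n+1} = a_{n+2}\|q_{n+1}\alpha\|$ and $(-1)^n e_{n+2} = \|q_{n+2}\alpha\|$, yielding the claimed identity. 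The only subtlety — and the one point that requires care — is tracking the alternating signs correctly when passing from the recursion on $e_n$ to the recursion on the norms; part (i) is exactly what makes this translation automatic, which is why it is done first.
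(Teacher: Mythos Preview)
Your argument is correct and complete: the closed form $e_n = (-1)^n/(q_n\alpha_{n+1}+q_{n-1})$ is derived correctly from the identity for $\alpha$ together with the determinant relation $p_nq_{n-1}-p_{n-1}q_n=(-1)^{n-1}$, and parts (i)--(iii) then follow exactly as you indicate. The sign bookkeeping in (iii) is handled cleanly.

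As for comparison with the paper: there is nothing to compare. The paper does not prove \lemref{qnp}; it is stated as a standard fact from continued fraction theory, with the reader referred to \cite{HW} and \cite{H} for background. Your write-up therefore supplies a proof where the paper simply quotes the result.
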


\begin{remark}
The case $n=0$ of the above lemma requires special care. If $0<\alpha<1/2$ so $a_1=q_1>q_0=1$, then $\| q_0 \alpha \| = \alpha$ and all three parts of the lemma remain true. However, if $1/2<\alpha<1$ so $a_1=q_1=q_0=1$, then $\| q_0 \alpha \| = 1-\alpha$ and one should modify the lemma by everywhere replacing $\| q_0 \alpha \|$ with $1-\| q_0 \alpha \|$.
\end{remark}

\section{Combinatorial analysis of the boundary function}
\label{sec:ca}

We begin our study of the boundary function $\bb_\alpha$ for a given irrational rotation angle $\alpha \in \TT$ with the partial quotients $\{ a_n \}$ and rational convergents $\{ p_n/q_n \}$. As the rotation angle is fixed throughout this section, we will drop the subscript $\alpha$ from our notations. Thus, the boundary function is $\bb = \inf_{j \geq 1} u_j$, where the functions $u_j: [0,\infty) \to \RR$ are defined in \eqref{UJ}. Each $u_j$ is positive, strictly increasing and convex on $[0,\infty)$, with $u_j(0)=c(\ve) j$ and $u'_j(0)=0$. Moreover, $u_j(r)$ is asymptotically linear as $r \to \infty$. Notice that for each $r \geq 0$,
\begin{equation}\label{tends}
\lim_{j \to \infty} u_j(r)=\infty, 
\end{equation}
hence the infimum in the definition of $\bb$ is in fact a minimum, that is, $\bb(r)=u_j(r)$ for some $j$ depending on $r$. Moreover, \eqref{tends} and the monotonicity of the $u_j$ easily imply that if $u_j(r) < u_k(r)$ for all $k \neq j$, then there is an open interval containing $r$ throughout which $u_j < u_k$ for all $k \neq j$. \vs

It will be convenient to say that $u_j$ is a {\bit constituent} of $\bb$ if $u_j=\bb$ in some non-empty open interval. In this case, we say that the index $j$ is {\bit present}. If $u_j$ is not a constituent of $\bb$, we say that $j$ is {\bit absent}. \vs

The following was first observed in \cite{Su}:

\begin{lemma}\label{paq}
If $j \geq 1$ is present, then $j=q_n$ for some $n \geq 0$.
\end{lemma}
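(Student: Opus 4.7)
The plan is to argue by contrapositive: assuming $j$ is not of the form $q_n$ for any $n\ge 0$, I will exhibit an index $k<j$ such that $u_k(r)<u_j(r)$ for every $r\ge 0$, which immediately prevents $u_j$ from agreeing with $\bb$ on any open interval. The whole strategy rests on the fact that the two parameters controlling the graph of $u_j$, namely the $y$-intercept $c(\ve)j$ and the slope $|\sin(\pi j \alpha)|$ at infinity (hence the size $\|j\alpha\|$), are simultaneously minimized by the closest return moments of $R_\alpha$, which by \thmref{mom} are precisely the denominators $q_n$.

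The first step is to invoke \thmref{mom}. If $j$ is not a denominator of a convergent, then $j$ is not a closest return moment of any orbit of $R_\alpha$, so there exists a positive integer $k<j$ with $\|k\alpha\|\le\|j\alpha\|$. In fact the inequality must be strict, since for an irrational $\alpha$ the equality $\|k\alpha\|=\|j\alpha\|$ with $0<k<j$ would force either $(j-k)\alpha\in\ZZ$ or $(j+k)\alpha\in\ZZ$, both of which are impossible. Thus $\|k\alpha\|<\|j\alpha\|$.

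The second step is the comparison of $u_k$ and $u_j$. For any real $w$ we have $|\sin(\pi w)|=\sin(\pi\|w\|)$, and $\sin(\pi\cdot)$ is strictly increasing on $[0,1/2]$, so $\|k\alpha\|<\|j\alpha\|$ yields
\[
4\sin^2(\pi k\alpha)<4\sin^2(\pi j\alpha).
\]
Together with the strict inequality $k^2<j^2$, the defining formula \eqref{UJ} gives
\[
u_k(r)^2 = c(\ve)^2\bigl(4\sin^2(\pi k\alpha)\,r^2+k^2\bigr)<c(\ve)^2\bigl(4\sin^2(\pi j\alpha)\,r^2+j^2\bigr)=u_j(r)^2
\]
for every $r\ge 0$. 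Consequently $u_j(r)>u_k(r)\ge\bb(r)$ for all $r\ge 0$, so $u_j$ cannot coincide with $\bb$ on any non-empty open interval, contradicting the assumption that $j$ is present.

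I do not expect a genuine obstacle here; the only care required is to check the strict inequality $\|k\alpha\|<\|j\alpha\|$ (whence the argument goes through even when $r=0$) and to remember that $j=1=q_0$ is always allowed, so that the smallest present index is covered by the statement.
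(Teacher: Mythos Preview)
Your proof is correct and follows essentially the same approach as the paper: both use \thmref{mom} to find, for any $j$ not among the $q_n$, a smaller index $k$ (the paper takes $k=q_n$ with $q_n<j<q_{n+1}$) satisfying $\|k\alpha\|<\|j\alpha\|$, and then conclude $u_k<u_j$ everywhere via monotonicity of $\sin^2$ on $[0,\pi/2]$ together with $k<j$. Your version is slightly more explicit in justifying the strictness of $\|k\alpha\|<\|j\alpha\|$ from the irrationality of $\alpha$, but the argument is otherwise identical.
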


\begin{proof}
Evidently $j=q_0=1$ is always present: Since $u_j(0)=c(\ve) j$, in some neighborhood of $0$ we have $u_1<u_j$ for all $j \geq 2$. Suppose $q_n<j<q_{n+1}$ for some $n \geq 0$. Then $\| q_n \alpha \| < \| j \alpha \| < 1/2$ since by \thmref{mom} the denominators $\{ q_n \}$ are the moments of closest return. It follows from monotonicity of $x \mapsto \sin^2 x$ on $[0,\pi/2]$ that
$$
\sin^2(\pi q_n \alpha) = \sin^2 (\pi \| q_n \alpha \|) < \sin^2 (\pi \| j \alpha \|) = \sin^2(\pi j \alpha).
$$
This easily implies $u_{q_n}<u_j$ everywhere, proving that $j$ is absent.
\end{proof}

Below we address the question of which elements of the sequence $\{ q_n \}$ are present. If $1 \leq q_n < q_m$, let $r_{n,m}$ denote the first coordinate of the unique intersection point of the graphs of $u_{q_n}$ and $u_{q_m}$. A simple computation based on the formula \eqref{UJ} shows that
\begin{equation}\label{rr1}
r_{n,m}^2 = \frac{q_m^2-q_n^2}{\delta_n - \delta_m},
\end{equation}
where
\begin{equation}\label{dn}
\delta_n = 4 \sin^2 (\pi q_n \alpha) = 4 \sin^2 (\pi \| q_n \alpha \|).
\end{equation}
Note that this defines $r_{n,m}$ for all pairs $0 \leq n < m$ except when $n=0, m=1$, and $1/2 < \alpha< 1$, in which case $q_0=q_1=1$. In this case, we set $r_{0,1}=0$. Evidently,
$$
\begin{cases} u_{q_n} < u_{q_m} & \quad \text{on} \ [0,r_{n,m}) \\
u_{q_n} > u_{q_m} & \quad \text{on} \ (r_{n,m},\infty).
\end{cases}
$$

In what follows, all triples $(k,n,m)$ of non-negative integers are assumed to be ordered in the sense that $k<n<m$.

\begin{lemma}[Trichotomy]\label{tric}
Suppose $q_k<q_n<q_m$. Then, the triple $(k,n,m)$ is of one of the following types: \vs
\begin{enumerate}
\item[$\bullet$]
{\bit Fair}, where $r_{k,n}<r_{k,m}<r_{n,m}$. In this case, $u_{q_n}< \min \{ u_{q_k}, u_{q_m} \}$ in the interval $(r_{k,n},r_{n,m})$. \vs
\item[$\bullet$]
{\bit Near miss}, where $r_{k,n}>r_{k,m}>r_{n,m}$. In this case, $u_{q_n} > \min \{ u_{q_k}, u_{q_m} \}$ everywhere. \vs
\item[$\bullet$]
{\bit Strike}, where $r_{k,n}=r_{k,m}=r_{n,m}$. In this case, $u_{q_n} > \min \{ u_{q_k}, u_{q_m} \}$ everywhere except at the common intersection point.\footnote{We don't know if strike triples can actually occur.}
\end{enumerate}
\end{lemma}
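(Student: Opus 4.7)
My plan is to reduce the comparison of the three functions $u_{q_k}, u_{q_n}, u_{q_m}$ to a linear problem by squaring. Writing $s = r^2$ and setting $L_j(s) = \delta_j \, s + q_j^{\, 2}$ for $j \in \{ k,n,m \}$, we have $u_{q_j}(r) = c(\ve) \sqrt{L_j(r^2)}$, so comparing the $u_{q_j}$ amounts to comparing the lower envelope of three affine functions of $s$. Because $\{ q_n \}$ are the closest return moments of $\ra$ (\thmref{mom}) and $\sin^2$ is increasing on $[0,\pi/2]$, the slopes satisfy $\delta_k > \delta_n > \delta_m$, while the intercepts satisfy $q_k^{\, 2} < q_n^{\, 2} < q_m^{\, 2}$. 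Writing $s_{a,b} := r_{a,b}^{\, 2}$ for the unique solution of $L_a(s) = L_b(s)$, formula \eqref{rr1} gives $(\delta_a - \delta_b) \, s_{a,b} = q_b^{\, 2} - q_a^{\, 2}$.

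The key step, which I expect to drive the entire proof, is the identity
$$
s_{k,m} = \lambda \, s_{k,n} + (1-\lambda) \, s_{n,m}, \qquad \lambda := \frac{\delta_k - \delta_n}{\delta_k - \delta_m} \in (0,1).
$$
I will obtain this by adding the two relations $(\delta_k - \delta_n) \, s_{k,n} = q_n^{\, 2} - q_k^{\, 2}$ and $(\delta_n - \delta_m) \, s_{n,m} = q_m^{\, 2} - q_n^{\, 2}$, noting that the right hand sides sum to $q_m^{\, 2} - q_k^{\, 2} = (\delta_k - \delta_m) \, s_{k,m}$. Since both weights in the combination are strictly positive and sum to $1$, $s_{k,m}$ is a strict convex combination of $s_{k,n}$ and $s_{n,m}$; this at once produces the trichotomy on the squares, and taking positive square roots transfers it to $r_{k,n}, r_{k,m}, r_{n,m}$.

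For the second clause in each case, I will track which line is lowest on which interval. Since $L_a < L_b$ precisely on $[0, s_{a,b})$ whenever $\delta_a > \delta_b$, one has $L_n < L_k$ on $(s_{k,n}, \infty)$ and $L_n < L_m$ on $[0, s_{n,m})$. In the fair case these intervals overlap on $(s_{k,n}, s_{n,m})$, giving $u_{q_n} < \min \{ u_{q_k}, u_{q_m} \}$ on $(r_{k,n}, r_{n,m})$. In the near-miss case $s_{n,m} < s_{k,n}$, and the \emph{complementary} half-lines $[0, s_{k,n})$ (on which $L_k < L_n$) and $(s_{n,m}, \infty)$ (on which $L_m < L_n$) already cover $[0,\infty)$; hence one of $L_k, L_m$ is always strictly below $L_n$, which translates to $u_{q_n} > \min \{ u_{q_k}, u_{q_m} \}$ throughout. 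In the strike case the three lines are concurrent, and the ordering of slopes forces $L_n$ to lie strictly between $L_k$ and $L_m$ on either side of the common point.

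I do not anticipate a serious obstacle; everything is controlled by the convex combination identity plus elementary bookkeeping about the three affine functions. The only subtlety worth flagging is the strictness of the near-miss inequality at the endpoints $s = s_{k,n}$ and $s = s_{n,m}$, which requires the separate observation that at each such point a different one of the three lines is strictly below the coincident pair. Everything else is routine.
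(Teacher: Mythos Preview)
Your argument is correct and complete. The paper itself omits the proof entirely, stating only that it ``is straightforward and will be left to the reader,'' so there is no approach to compare against; your reduction to the lower envelope of three affine functions via the convex-combination identity $s_{k,m}=\lambda\,s_{k,n}+(1-\lambda)\,s_{n,m}$ is exactly the kind of clean verification the authors had in mind.
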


The proof is straightforward and will be left to the reader. The three possibilities are depicted in \figref{3curves}. \vs

Observe that \lemref{tric} covers all ordered triples $(k,n,m)$ of non-negative integers except when $k=0, n=1$, and $1/2<\alpha<1$, in which case $q_0=q_1=1$ and $r_{k,n}=0<r_{k,m}=r_{n,m}$. By convention, we consider $(0,1,m)$ a fair triple in this case.

\begin{figure*}
  \includegraphics[width=\textwidth]{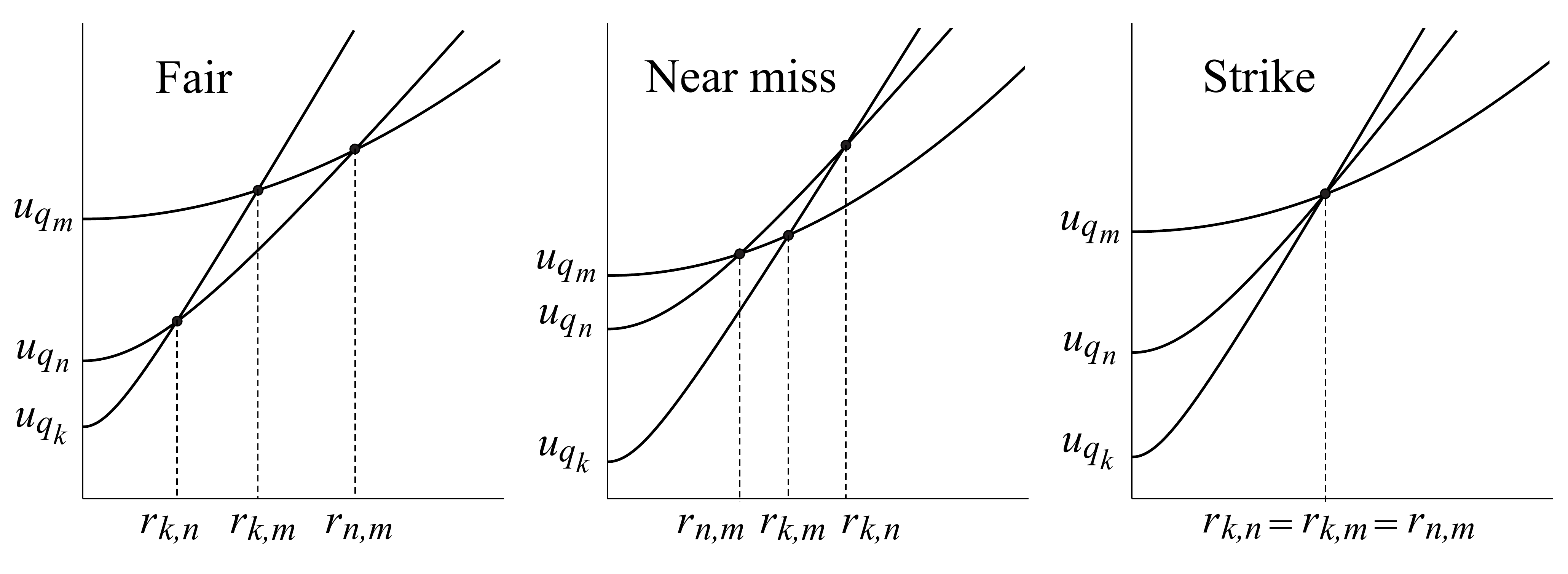}
\caption{Trichotomy of ordered triples $(k,n,m)$.}
\label{3curves} 
\end{figure*}

\begin{theorem}[Combinatorial characterization of presence]\label{chab}
For $n \geq 1$, the index $q_n$ is present if and only if every triple $(k,n,m)$ is fair.
\end{theorem}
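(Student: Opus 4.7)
My plan is to prove each direction by exploiting the trichotomy of Lemma~\ref{tric}, after one preliminary reduction. From the proof of Lemma~\ref{paq}, any $j$ with $q_n < j < q_{n+1}$ satisfies $u_{q_n} < u_j$ everywhere, so the lower envelope $\bb = \inf_{j \geq 1} u_j$ equals $\inf_n u_{q_n}$ pointwise. Hence $q_n$ is present if and only if there is a non-empty open interval on which $u_{q_n} < u_{q_m}$ for every $m \neq n$.

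For the forward implication, suppose $q_n$ is present with witness interval $I$, and fix any $k < n < m$. Shrinking $I$ if necessary (distinct $u_{q_j}$'s intersect at only finitely many points), we may assume $u_{q_n} < u_{q_k}$ and $u_{q_n} < u_{q_m}$ strictly on $I$, whence $u_{q_n} < \min(u_{q_k}, u_{q_m})$ on a non-empty open set. But Lemma~\ref{tric} says that near-miss and strike triples both force the opposite inequality $u_{q_n} > \min(u_{q_k}, u_{q_m})$ off a set of at most one point. Neither option is compatible with the behaviour on $I$, so $(k, n, m)$ must be fair.

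For the converse, assume every triple $(k, n, m)$ is fair. I will construct the witness interval explicitly. Let $k^* \in \{0, 1, \ldots, n-1\}$ attain $R_- = \max_{k < n} r_{k,n}$, a finite maximum. The key step is to show that the infimum $R_+ = \inf_{m > n} r_{n,m}$ is actually attained at some $m^*$: from formula~\eqref{rr1}, $r_{n,m}^2 = (q_m^2 - q_n^2)/(\delta_n - \delta_m)$, and Lemma~\ref{qnp}(ii) together with $\sin x < x$ yields $\delta_m = 4\sin^2(\pi\|q_m\alpha\|) < 4\pi^2/q_{m+1}^2 \to 0$ while $q_m \to \infty$, so $r_{n, m} \to \infty$ as $m \to \infty$ and the infimum is a minimum. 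Fairness of the single triple $(k^*, n, m^*)$ then gives $R_- = r_{k^*, n} < r_{n, m^*} = R_+$, so $I = (R_-, R_+)$ is a non-empty open interval. On $I$: for each $k < n$, fairness of $(k, n, m^*)$ combined with $r_{k,n} \leq R_-$ places $I$ inside $(r_{k,n}, r_{n, m^*})$ and hence gives $u_{q_n} < u_{q_k}$ on $I$; symmetrically, for each $m > n$, fairness of $(k^*, n, m)$ combined with $r_{n,m} \geq R_+$ gives $u_{q_n} < u_{q_m}$ on $I$. By the reduction above, $u_{q_n} = \bb$ on $I$, so $q_n$ is present. The only genuine obstacle I anticipate is verifying that $R_+$ is attained (rather than only approached as $m \to \infty$), which is precisely where the continued-fraction estimate $\|q_m\alpha\| < 1/q_{m+1}$ enters; everything else is a bookkeeping exercise that uses the extremal pair $(k^*, m^*)$ to collapse infinitely many fairness conditions into a single interval.
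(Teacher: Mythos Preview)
Your proof is correct and follows essentially the same route as the paper: both directions hinge on the trichotomy lemma, and in the converse you build the witness interval from $R_-=\max_{k<n} r_{k,n}$ and $R_+=\min_{m>n} r_{n,m}$ exactly as the paper does (your justification that $r_{n,m}\to\infty$ is what the paper leaves implicit). Two small remarks: the paper disposes of the degenerate case $n=1$, $1/2<\alpha<1$ (where $q_0=q_1=1$) separately before running the main argument, which you should also do since your strict-inequality reformulation fails there; and in the converse you don't actually need fairness of the auxiliary triples $(k,n,m^*)$ and $(k^*,n,m)$---the bare definition of $r_{k,n}$ already gives $u_{q_n}<u_{q_k}$ on $(r_{k,n},\infty)\supset I$.
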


\begin{proof}
If $1/2<\alpha<1$ so $q_0=q_1=1$, then $q_1$ is present and every triple $(0,1,m)$ is fair by our convention. Therefore, we may assume $n \geq 2$, or $n=1$ but $0<\alpha<1/2$. \vs

If there is a near miss or strike triple $(k,n,m)$, \lemref{tric} shows that $u_{q_n} > \min \{ u_{q_k}, u_{q_m} \}$ everywhere except possibly at one point. Thus, $q_n$ is absent. Conversely, suppose every triple $(k,n,m)$ is fair. Let $x=\max \, \{ r_{k,n} : k<n \}$ and $y=\min \, \{ r_{n,m}: m>n \}$ ($y$ exists since $r_{n,m} \to \infty$ as $m \to \infty$). Clearly, $x<y$. Moreover, $u_{q_n}<u_{q_k}$ in $(x,\infty)$ if $k<n$, and $u_{q_n}<u_{q_m}$ in $[0,y)$ if $m>n$. Thus, $u_{q_n} < u_{q_j}$ in $(x,y)$ if $j \neq n$, which proves $q_n$ is present.
\end{proof}

The following corollary is immediate from the above proof:

\begin{corollary}\label{who}
Suppose $q_n$ is present for some $n \geq 0$ and $[x,y]$ is the maximal interval on which $u_{q_n}=\bb$. Then
$$
x=\max \, \{ r_{k,n} : k<n \} \qquad \text{and} \qquad y = \min \, \{ r_{n,m}: m>n \}
$$
(for $n=0$, $x$ is understood to be $0$). Moreover, If $q_{n^*}$ is the next present index after $q_n$, then $r_{n,n^*} = y$.
\end{corollary}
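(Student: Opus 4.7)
The plan is to extract what has already been established in the proof of \thmref{chab} and then treat the two remaining claims (maximality of the interval $[x,y]$ and the identification $y = r_{n,n^*}$) separately. For the first claim, the argument in the proof of \thmref{chab} has essentially done the work: since $q_n$ is present, every triple $(k,n,m)$ is fair, and the proof shows that the $x$ and $y$ defined in the statement satisfy $u_{q_n} < u_{q_j}$ on $(x,y)$ for all $j \neq n$. By \lemref{paq}, non-$q$ indices are strictly dominated everywhere by some $u_{q_k}$, so in fact $u_{q_n} < u_j$ for every $j \neq q_n$ on $(x,y)$, giving $u_{q_n} = \bb$ on $[x,y]$ by continuity. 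To verify maximality, I would choose $k_0 < n$ and $m_0 > n$ realizing $x = r_{k_0,n}$ and $y = r_{n,m_0}$. By the very definition of the intersection points, $u_{q_{k_0}}(r) < u_{q_n}(r)$ for $r$ slightly less than $x$ and $u_{q_{m_0}}(r) < u_{q_n}(r)$ for $r$ slightly greater than $y$, so $u_{q_n}$ strictly exceeds $\bb$ just outside $[x,y]$.

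For the final assertion, I will show that the next present index $q_{n^*}$ is precisely $q_{m_0}$. At $y = r_{n,m_0}$ we have $u_{q_n}(y) = u_{q_{m_0}}(y)$, while for every $k<n$ the inequality $y > x \geq r_{k,n}$ forces $u_{q_k}(y) > u_{q_n}(y)$, and for every $m > n$ with $m \neq m_0$ the inequality $y < r_{n,m}$ forces $u_{q_m}(y) > u_{q_n}(y)$. Hence $u_{q_n}$ and $u_{q_{m_0}}$ are the only $u_{q_j}$'s that achieve $\bb(y)$, and by \lemref{paq} every remaining $u_j$ is strictly dominated at $y$. By continuity, both $u_{q_{m_0}}$ and $u_{q_n}$ stay below all other $u_j$ on a small neighborhood of $y$; and since $u_{q_{m_0}} < u_{q_n}$ on $(r_{n,m_0},\infty)$, I get $\bb = u_{q_{m_0}}$ on a right-neighborhood of $y$. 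Thus $q_{m_0}$ is present, and its interval of presence begins exactly at $y$.

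The last subtlety — which I expect to be the main obstacle — is ruling out another present index sneaking in between the intervals of $q_n$ and $q_{m_0}$. The key observation is that any two $u_j, u_k$ intersect at most once on $[0,\infty)$, since $u_j^2 - u_k^2$ is affine in $r^2$; consequently each present index has a single interval of presence and these intervals tile $[0,\infty)$ (their interiors being the open set where the minimizer of $r \mapsto \min_j u_j(r)$ is unique). Because the interval of $q_{m_0}$ begins at the right endpoint $y$ of the interval of $q_n$, there is no room for any other present index in between, so $n^* = m_0$ and $y = r_{n,n^*}$.
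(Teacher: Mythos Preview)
Your argument is essentially correct and in the same spirit as the paper, which simply declares the corollary ``immediate from the above proof'' of \thmref{chab} without further comment. You have in fact supplied the details the paper omits, particularly for the last assertion about $n^*$; your tiling observation (that the $u_j^2$ differ by functions affine in $r^2$, so graphs cross at most once, hence the intervals of presence partition $[0,\infty)$ with the ordering of intervals matching the ordering of indices) is exactly the right way to make ``immediate'' precise.

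There is one small gap. When you write ``choose $m_0>n$ realizing $y=r_{n,m_0}$'' and then assert ``for every $m>n$ with $m\neq m_0$ the inequality $y<r_{n,m}$ forces $u_{q_m}(y)>u_{q_n}(y)$,'' you are tacitly assuming the minimum $y=\min\{r_{n,m}:m>n\}$ is achieved by a \emph{unique} $m_0$. If several indices $m_0<m_1<\cdots<m_k$ all satisfy $r_{n,m_i}=y$, then each triple $(n,m_i,m_j)$ with $i<j$ is a strike, so by \thmref{chab} every $q_{m_i}$ with $i<k$ is absent; it is $u_{q_{m_k}}$, not $u_{q_{m_0}}$, that takes over to the right of $y$. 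The fix is simply to take $m_0$ to be the \emph{largest} index achieving the minimum; then your continuity argument goes through verbatim and still yields $r_{n,n^*}=y$. (The paper itself notes in a footnote that it does not know whether strike triples actually occur, so this case may be vacuous, but the statement of the corollary does not exclude it.)
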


\thmref{p/a} below will show that $n^*$ is always $n+1$ or $n+2$. \vs

Recall from \eqref{dn} that $\delta_n = 4 \sin^2 (\pi \| q_n \alpha \|)$. The sequence $\{ \delta_n \}_{n \geq 1}$ is strictly decreasing and tends to $0$ as $n \to \infty$.

\begin{lemma}\label{do}
For every $n \geq 1$, $\delta_n > 2 \delta_{n+2}$. If $a_{n+2} \geq 2$, the stronger inequality $\delta_n > 2 \delta_{n+1}$ holds.
The same is true for $n=0$ provided that $0<\alpha<1/2$.
\end{lemma}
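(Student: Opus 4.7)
The plan is to set $x_n = \|q_n\alpha\|$ so that $\delta_n = 4\sin^2(\pi x_n)$, and to extract everything from part (iii) of \lemref{qnp}, which gives
$$
x_n = a_{n+2}\, x_{n+1} + x_{n+2}.
$$
Since $\{x_n\}$ is strictly decreasing and $a_{n+2} \geq 1$, this immediately yields $x_n \geq x_{n+1}+x_{n+2} > 2x_{n+2}$. When $a_{n+2}\geq 2$, the same identity yields the stronger estimate $x_n \geq 2x_{n+1}+x_{n+2} > 2x_{n+1}$. The $n=0$ case under the hypothesis $0<\alpha<1/2$ is covered by the remark following \lemref{qnp}, so no modification of these bounds is needed there.

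I would then convert these $x$-estimates into inequalities between the $\delta_j$'s using the double-angle identity. Provided $x_{n+2}\leq 1/4$, both $\pi x_n$ and $2\pi x_{n+2}$ lie in $[0,\pi/2]$ where sine is strictly increasing, so the strict inequality $x_n > 2 x_{n+2}$ gives
$$
\sin^2(\pi x_n) > \sin^2(2\pi x_{n+2}) = 4\sin^2(\pi x_{n+2})\cos^2(\pi x_{n+2}).
$$
Multiplying by $4$, this reads $\delta_n > 4\delta_{n+2}\cos^2(\pi x_{n+2})$, and the claim $\delta_n > 2\delta_{n+2}$ follows from $\cos^2(\pi x_{n+2}) > 1/2$, i.e.\ from $x_{n+2} < 1/4$. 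The same argument, with $n+2$ replaced everywhere by $n+1$, delivers the stronger conclusion $\delta_n > 2\delta_{n+1}$ under the assumption $a_{n+2}\geq 2$, provided one has $x_{n+1}<1/4$.

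The only genuinely case-specific part of the argument — and the main obstacle — is verifying the bounds $x_{n+2}<1/4$ (resp.\ $x_{n+1}<1/4$) in the stated ranges. By part (ii) of \lemref{qnp}, it suffices to show $q_{n+3}\geq 5$ (resp.\ $q_{n+2}\geq 5$). Running the recursion \eqref{pqr} from $q_0=1$ and $q_1 = a_1 \geq 1$, and using $a_i\geq 1$, one checks $q_2\geq 2$, $q_3\geq 3$, and $q_4\geq q_3+q_2\geq 5$, so $q_{n+3}\geq q_4\geq 5$ whenever $n\geq 1$. For $n=0$ with $0<\alpha<1/2$ one has $a_1\geq 2$, forcing $q_2\geq 3$ and then $q_3\geq q_2+q_1\geq 5$. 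For the strengthening with $a_{n+2}\geq 2$, the recursion gives $q_{n+2} = a_{n+2}q_{n+1}+q_n \geq 2q_{n+1}+q_n$, which is $\geq 5$ in each of the stated cases (using $q_{n+1}\geq 2$ and $q_n\geq 1$ once $n\geq 1$, and directly $q_2\geq 2a_1+1\geq 5$ when $n=0$ and $\alpha<1/2$). These bookkeeping checks are the final step and complete the proof.
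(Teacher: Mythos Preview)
Your proof is correct and follows essentially the same route as the paper, which packages your double-angle computation into the single elementary inequality $\sin^2 y > 2\sin^2 x$ for $0 < 2x < y < \pi/2$. Your final case analysis is unnecessary, however: since $2x_{n+2} < x_n < 1/2$ (resp.\ $2x_{n+1} < x_n < 1/2$ when $a_{n+2}\geq 2$), the strict bound $x_{n+2} < 1/4$ (resp.\ $x_{n+1} < 1/4$) follows immediately without any appeal to the size of the $q_j$.
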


\begin{proof}
We will use the elementary inequality
\begin{equation}\label{coss}
\sin^2 y > 2 \sin^2 x \qquad \Bigl( 0<2x<y<\frac{\pi}{2} \Bigr).
\end{equation}
If $n \geq 1$, or if $n=0$ and $0<\alpha<1/2$, then $\| q_n \alpha \| = a_{n+2} \| q_{n+1} \alpha \| + \| q_{n+2} \alpha \|$ (see \lemref{qnp}(iii) and the subsequent remark). This shows $\| q_n \alpha \| > 2 \| q_{n+2} \alpha \|$, and applying \eqref{coss} with $y=\pi \| q_n \alpha \|$ and $x=\pi \| q_{n+2} \alpha \|$ will give $\delta_n>2\delta_{n+2}$. If $a_{n+2} \geq 2$, we have $\| q_n \alpha \| > 2 \| q_{n+1} \alpha \|$, so \eqref{coss} with $y=\pi \| q_n \alpha \|$ and $x= \pi \| q_{n+1} \alpha \|$ will give $\delta_n>2\delta_{n+1}$.
\end{proof}

\begin{lemma}\label{se}
Let $n \geq 1$. 
\begin{enumerate}
\item[$\bullet$]
If $a_{n+2} \geq 2$, then $r_{n,n+1}<r_{n,m}$ for every $m>n+1$. \vs
\item[$\bullet$]
If $a_{n+2}=1$, then $r_{n,n+2}<r_{n,m}$ for every $m>n+2$.  . \vs
\end{enumerate}
The same is true for $n=0$ provided that $0 < \alpha <1/2$.
\end{lemma}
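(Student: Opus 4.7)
My plan is to reformulate the inequality using \eqref{rr1} and then verify it factor by factor. Squaring, clearing denominators, and simplifying, one checks that the claim $r_{n,n^*}<r_{n,m}$---with $n^*=n+1$ in the first case and $n^*=n+2$ in the second---is equivalent to
\begin{equation*}
(q_m^2-q_{n^*}^2)(\delta_n-\delta_{n^*}) > (q_{n^*}^2-q_n^2)(\delta_{n^*}-\delta_m)
\end{equation*}
for every $m>n^*$. All four factors are positive (note $\delta_{n^*}>\delta_m$ since $\|q_j\alpha\|$ is strictly decreasing), so it suffices to show that each factor on the left strictly dominates the corresponding factor on the right.

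The $\delta$-factors are handled by \lemref{do}. Under $a_{n+2}\geq 2$ one has $\delta_n>2\delta_{n+1}$, so $\delta_n-\delta_{n+1}>\delta_{n+1}>\delta_{n+1}-\delta_m$. Under $a_{n+2}=1$ the unconditional half of \lemref{do} supplies $\delta_n>2\delta_{n+2}$, so $\delta_n-\delta_{n+2}>\delta_{n+2}>\delta_{n+2}-\delta_m$. In either regime both of the needed strict inequalities fall out of a single application of \lemref{do}.

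For the $q$-factors, monotonicity of $\{q_j\}$ together with $m>n^*$ reduces matters to showing $q_{n^*+1}^2-q_{n^*}^2>q_{n^*}^2-q_n^2$. I would factor each side as $(a+b)(a-b)$ and apply the recursion \eqref{pqr}. In the first case, $a_{n+2}\geq 2$ yields $q_{n+2}-q_{n+1}=(a_{n+2}-1)q_{n+1}+q_n\geq q_{n+1}+q_n>q_{n+1}-q_n$, while $q_{n+2}+q_{n+1}>q_{n+1}+q_n$ is immediate. In the second case, $a_{n+2}=1$ forces $q_{n+2}-q_n=q_{n+1}$, and the recursion gives $q_{n+3}-q_{n+2}=(a_{n+3}-1)q_{n+2}+q_{n+1}\geq q_{n+1}$ together with $q_{n+3}+q_{n+2}>q_{n+2}+q_n$; multiplying produces the desired strict inequality, with strictness surviving even when $a_{n+3}=1$ thanks to the strict sum factor. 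The case $n=0$ with $0<\alpha<1/2$ requires no modification, since both \lemref{qnp}(iii) and \lemref{do} remain valid in this range. I anticipate no real obstacle; the whole argument hinges on the clean factorization of the key inequality into a $\delta$-part and a $q$-part, each handled by exactly one of the preceding lemmas.
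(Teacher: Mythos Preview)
Your argument is correct. The reformulation
\[
(q_m^2-q_{n^*}^2)(\delta_n-\delta_{n^*}) > (q_{n^*}^2-q_n^2)(\delta_{n^*}-\delta_m)
\]
is equivalent to $r_{n,n^*}<r_{n,m}$, the four factors are indeed positive, and your factor-by-factor estimates go through as written, including the $n=0$ case.

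Your decomposition, however, is not the one the paper uses. The paper compares $r_{n,m}^2=(q_m^2-q_n^2)/(\delta_n-\delta_m)$ to $r_{n,n^*}^2=(q_{n^*}^2-q_n^2)/(\delta_n-\delta_{n^*})$ directly: it bounds the numerator $q_m^2-q_n^2$ from below by a multiple of $q_{n^*}^2-q_n^2$ and the denominator $\delta_n-\delta_m$ from above by a multiple of $\delta_n-\delta_{n^*}$, obtaining (in the first case) the quantitatively stronger conclusion $r_{n,m}^2>2\,r_{n,n+1}^2$. You instead subtract the common pieces and compare $q_m^2-q_{n^*}^2$ against $q_{n^*}^2-q_n^2$ and $\delta_n-\delta_{n^*}$ against $\delta_{n^*}-\delta_m$. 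Both routes rest on the same two ingredients (\lemref{do} for the $\delta$-side and the recursion \eqref{pqr} for the $q$-side), but yours is algebraically cleaner and avoids the auxiliary constants $2$ and $4$, at the cost of not yielding the extra quantitative margin the paper records.
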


\begin{proof}
Let $n \geq 1$, or $n=0$ and $0<\alpha<1/2$. First suppose $a_{n+2} \geq 2$ and choose any $m>n+1$. By \lemref{do}, $\delta_n > 2 \delta_{n+1}$, so
\begin{equation}\label{dd1}
\delta_n-\delta_m < \delta_n < 2 (\delta_n-\delta_{n+1}).
\end{equation}
Also, $q_{n+2}=a_{n+2} q_{n+1}+q_n > 2 q_{n+1}$, so
\begin{equation}\label{qq1}
q_m^2 - q_n^2 \geq q_{n+2}^2 - q_n^2 > 4 q_{n+1}^2 - q_n^2 > 4 (q_{n+1}^2-q_n^2).
\end{equation}
It follows from \eqref{dd1}, \eqref{qq1}, and the formula \eqref{rr1} that
$$
r_{n,m}^2 = \frac{q_m^2-q_n^2}{\delta_n-\delta_m} > 2 \ \frac{q_{n+1}^2-q_n^2}{\delta_n-\delta_{n+1}} = 2 r_{n,n+1}^2,
$$
which proves $r_{n,n+1}<r_{n,m}$. \vs

Next suppose $a_{n+2}=1$ and choose any $m>n+2$. By \lemref{do}, $\delta_n > 2 \delta_{n+2}$, so
\begin{equation}\label{dd2}
\delta_n-\delta_m < \delta_n < 2 (\delta_n-\delta_{n+2}).
\end{equation}
Also, $q_{n+2}=q_{n+1}+q_n$, so
\begin{align}\label{qq2}
q_m^2 - q_n^2 \geq q_{n+3}^2 - q_n^2 & \geq (q_{n+2}+q_{n+1})^2-q_n^2 \notag \\
& = (2q_{n+1}+q_n)^2-q_n^2 = 4 q_{n+1}^2 + 4 q_{n+1}q_n \\
& > 2 (q_{n+1}^2+ 2 q_{n+1}q_n) = 2 (q_{n+2}^2-q_n^2). \notag
\end{align}
It follows from \eqref{dd2}, \eqref{qq2}, and the formula \eqref{rr1} that
$$
r_{n,m}^2 = \frac{q_m^2-q_n^2}{\delta_n-\delta_m} > \frac{q_{n+2}^2-q_n^2}{\delta_n-\delta_{n+2}} = r_{n,n+2}^2,
$$
which proves $r_{n,n+2}<r_{n,m}$.
\end{proof}

\begin{theorem}[No consecutive absentees]\label{p/a}
Suppose $q_n$ is present but $q_{n+1}$ is absent for some $n \geq 0$. Then $a_{n+2}=1$ and $q_{n+2}$ is present.
\end{theorem}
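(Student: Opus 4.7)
The plan is to combine Corollary~\ref{who}---which pins down the next present index $n^{\ast}$ after $n$ by the relation $r_{n,n^{\ast}} = y := \min\{r_{n,m} : m > n\}$---with the strict inequalities among the $r_{n,m}$ provided by Lemma~\ref{se}. Both claims of the theorem will fall out by eliminating the alternatives for $n^{\ast}$.

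First I would rule out $a_{n+2} \geq 2$. Assume $a_{n+2} \geq 2$. By Lemma~\ref{se}, $r_{n,n+1} < r_{n,m}$ for every $m > n+1$, so $y = r_{n,n+1}$, with the minimum uniquely attained at $m = n+1$. Corollary~\ref{who} then forces $n^{\ast} = n+1$, contradicting the absence of $q_{n+1}$. Hence $a_{n+2} = 1$.

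With $a_{n+2} = 1$ established, Lemma~\ref{se} gives $r_{n,n+2} < r_{n,m}$ for every $m > n+2$, so $y = \min(r_{n,n+1},\, r_{n,n+2}) \leq r_{n,n+2}$. Since $q_{n+1}$ is absent, Corollary~\ref{who} rules out $n^{\ast} = n+1$, so $n^{\ast} \geq n+2$. If $n^{\ast} > n+2$, Lemma~\ref{se} would give $r_{n,n^{\ast}} > r_{n,n+2} \geq y$, contradicting $r_{n,n^{\ast}} = y$. Therefore $n^{\ast} = n+2$, and $q_{n+2}$ is present.

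The main subtlety is the possible ``strike'' configuration in which $r_{n,n+1} = r_{n,n+2}$ and both $m = n+1$ and $m = n+2$ attain $y$; but the hypothesis that $q_{n+1}$ is absent eliminates $n+1$ as a candidate for $n^{\ast}$, so the argument goes through uniformly. The only other delicate point is the edge case $n = 0$ with $\tfrac{1}{2} < \alpha < 1$, where $q_0 = q_1 = 1$: here the convention that $(0,1,m)$ is fair makes $q_1$ automatically present, so the hypothesis ``$q_{n+1}$ absent'' already forces us into the range $0 < \alpha < 1/2$ where Lemma~\ref{se} applies at $n=0$.
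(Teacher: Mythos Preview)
Your argument is correct and follows essentially the same approach as the paper: both use \lemref{se} together with \corref{who} to force $a_{n+2}=1$ and then to identify the next present index as $n+2$. Your version is slightly more explicit in spelling out why the equality $r_{n,n^{\ast}}=y$ pins down $n^{\ast}$ uniquely (via the strictness of the inequalities in \lemref{se}), and your handling of the edge case $n=0$, $1/2<\alpha<1$ mirrors the paper's.
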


\begin{proof}
If $n=0$ and $1/2<\alpha<1$, then $a_1=1$ and $q_1=q_0=1$ is present. Therefore, we may assume that $n \geq 1$, or $n=0$ and $0<\alpha<1/2$. \vs

If $a_{n+2}$ were $\geq 2$, \lemref{se} would guarantee that $r_{n,n+1}<r_{n,m}$ for every $m>n+1$. This, by \corref{who} would imply that $q_{n+1}$ is the next present index after $q_n$, contradicting our assumption. Thus, $a_{n+2}=1$. Invoking \lemref{se} once more, we see that $r_{n,n+2}<r_{n,m}$ for every $m>n+2$. Since $q_{n+1}$ is absent, \corref{who} shows that $q_{n+2}$ is the next present index after $q_n$.
\end{proof}

We can now reduce the characterization of presence in \thmref{chab} to a much simpler condition:

\begin{corollary}\label{sufi}
For $n \geq 1$, the index $q_n$ is present if and only if $a_{n+1} \geq 2$ or the triple $(n-1,n,n+1)$ is fair.
\end{corollary}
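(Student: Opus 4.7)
The plan is to prove the forward direction immediately from \thmref{chab} and the backward direction by contrapositive, with the key inputs being \thmref{p/a} and \corref{who}.

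For the forward direction, if $q_n$ is present then \thmref{chab} says every triple $(k,n,m)$ with $k<n<m$ is fair, and in particular $(n-1,n,n+1)$ is fair, which already gives the disjunction on the right. For the backward direction I will argue by contrapositive: assume $q_n$ is absent, and show both $a_{n+1}=1$ and that $(n-1,n,n+1)$ is not fair.

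The first step will be to show that $q_{n-1}$ must be present. If instead $q_{n-1}$ were absent, let $j$ be the largest index strictly less than $n$ with $q_j$ present (this exists since $q_0$ is always present, as noted in the proof of \lemref{paq}); then $q_{j+1},\ldots,q_n$ would all be absent, but applying \thmref{p/a} to the present-absent pair $(q_j, q_{j+1})$ would force $q_{j+2}$ to be present, contradicting $j+2 \leq n$. Having established that $q_{n-1}$ is present, I apply \thmref{p/a} directly to the pair $(q_{n-1},q_n)$ to obtain $a_{n+1}=1$ and that $q_{n+1}$ is present. For the second conclusion, I will invoke \corref{who} on the present index $q_{n-1}$: because its next present successor is $q_{n+1}$ (as $q_n$ is absent while $q_{n+1}$ is present), the right endpoint of the maximal interval on which $u_{q_{n-1}}=\bb$ is simultaneously equal to $r_{n-1,n+1}$ and to $\min_{m>n-1} r_{n-1,m} \leq r_{n-1,n}$. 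Thus $r_{n-1,n+1}\leq r_{n-1,n}$, which directly contradicts the strict inequality $r_{n-1,n}<r_{n-1,n+1}$ required for $(n-1,n,n+1)$ to be fair, finishing the contrapositive.

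I expect the main subtle point to be extracting ``no two consecutive absentees'' from the formulation of \thmref{p/a}, which a priori guarantees only that a present-absent pair is followed by a present index; the minimal-index argument above iterates this to rule out two absents in a row. After that, the chaining of \thmref{p/a} with \corref{who} is essentially bookkeeping. The only edge case worth flagging is $n=1$ with $1/2<\alpha<1$, where $q_0=q_1=1$ and the conventions of \secref{sec:ca} declare $q_1$ present and $(0,1,m)$ fair, so the equivalence holds tautologically.
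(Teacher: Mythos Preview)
Your proof is correct and follows essentially the same route as the paper's: forward direction via \thmref{chab}, backward direction by contrapositive using \thmref{p/a} to get $a_{n+1}=1$ and $q_{n-1},q_{n+1}$ present, then \corref{who} to obtain $r_{n-1,n+1}\le r_{n-1,n}$, contradicting fairness. The only difference is that the paper invokes \thmref{p/a} (titled ``No consecutive absentees'') directly for the presence of $q_{n-1}$, whereas you spell out the minimal-index iteration explicitly; this is a matter of exposition, not substance.
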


\begin{proof}
If $q_n$ is present, the triple $(n-1,n,n+1)$ is fair by \thmref{chab}.
If $q_n$ is absent, by \thmref{p/a} both $q_{n-1}$ and $q_{n+1}$ must be present and $a_{n+1}=1$. Moreover, $r_{n-1,n+1} \leq r_{n-1,n}$ by \corref{who}, which shows the triple $(n-1,n,n+1)$ is a near miss or strike.
\end{proof}

A purely arithmetical characterization of presence is far from simple; see the Appendix for further details on this problem.

\section{Asymptotic analysis of the boundary function}
\label{sec:aa}

We continue assuming that $\alpha \in \TT$ is a fixed irrational number with the partial quotients $\{ a_n \}$ and rational convergents $\{ p_n/q_n \}$. Suppose $u_{q_n}$ is a constituent of the boundary function $\bb$ and $[x,y]$ is the maximal interval on which $u_{q_n}=\bb$. \corref{who} combined with \thmref{p/a} show that $y=r_{n,n+1}$ or $r_{n,n+2}$, and $x=r_{n-1,n}$ or $r_{n-2,n}$. Our next task is to find sharp asymptotics for these endpoints. \vs

We will make use of the following convenient terminologies and notations. By a {\bit universal constant} we mean one which is independent of all the parameters and variables involved. Given positive sequences $\{ a_n \}$ and $\{ b_n \}$, we write $a_n \preccurlyeq b_n$ if there is a universal constant $C>0$ such that $a_n \leq C b_n$ for all large $n$. This may also be written as $b_n \succcurlyeq a_n$. The notation $a_n \asymp b_n$ means that both $a_n \preccurlyeq b_n$ and $a_n \succcurlyeq b_n$ hold. In other words, $a_n \asymp b_n$ if there is a universal constant $C \geq 1$ such that $C^{-1} \, b_n \leq a_n \leq C \, b_n$ for all large $n$. For a pair of positive functions $f(r),g(r)$ depending on $r>0$, we define $f(r) \preccurlyeq g(r)$ and $f(r) \asymp g(r)$ similarly, where now the corresponding inequalities should hold for all large $r$.
Any such relation will be called an {\bit asymptotically universal bound}.

\begin{lemma}\label{tn1}
As $n \to \infty$, the following asymptotically universal bound holds:
$$
r_{n,n+2} \asymp q_{n+1} q_{n+2}.
$$
\end{lemma}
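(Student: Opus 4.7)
The plan is to estimate the numerator and denominator of the explicit formula
$$
r_{n,n+2}^2 = \frac{q_{n+2}^2-q_n^2}{\delta_n-\delta_{n+2}}
$$
from \eqref{rr1} separately, and show that the former is $\asymp q_{n+2}^2$ while the latter is $\asymp 1/q_{n+1}^2$. Multiplying the two asymptotics will give $r_{n,n+2}^2 \asymp q_{n+1}^2 q_{n+2}^2$, which is the claim.

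For the numerator, I would use the crude but universal inequality $q_{n+2} > 2q_n$, which follows from the recursion \eqref{pqr} together with $q_{n+1} > q_n$. This gives $q_n^2 < q_{n+2}^2/4$, hence $\tfrac{3}{4}\, q_{n+2}^2 < q_{n+2}^2 - q_n^2 < q_{n+2}^2$, so $q_{n+2}^2 - q_n^2 \asymp q_{n+2}^2$.

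For the denominator I would combine \lemref{do} and \lemref{qnp}. First, \lemref{do} gives $\delta_n > 2\delta_{n+2}$ for every $n \geq 1$, so $\tfrac{1}{2}\delta_n < \delta_n - \delta_{n+2} < \delta_n$, i.e.\ $\delta_n - \delta_{n+2} \asymp \delta_n$. Since $\|q_n\alpha\| < 1/q_{n+1} \to 0$ by \lemref{qnp}(ii), for all sufficiently large $n$ the argument $\pi\|q_n\alpha\|$ lies in a range where $\sin^2(\pi\|q_n\alpha\|) \asymp (\pi\|q_n\alpha\|)^2$, giving $\delta_n \asymp \|q_n\alpha\|^2$. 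Another application of \lemref{qnp}(ii), namely $1/(2q_{n+1}) < \|q_n\alpha\| < 1/q_{n+1}$, yields $\|q_n\alpha\|^2 \asymp 1/q_{n+1}^2$. Chaining these bounds gives $\delta_n - \delta_{n+2} \asymp 1/q_{n+1}^2$, with a universal constant that does not depend on $\alpha$.

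There is really no main obstacle here: the lemma is a bookkeeping consequence of the factor-of-two separation $\delta_n > 2\delta_{n+2}$ furnished by \lemref{do}, the standard comparison $\|q_n\alpha\| \asymp 1/q_{n+1}$, and the small-angle estimate for $\sin^2$. The only thing worth being careful about is that every $\asymp$ used above must be asymptotically universal in the sense defined at the start of the section, so that the final constant in $r_{n,n+2} \asymp q_{n+1}q_{n+2}$ is likewise independent of $\alpha$; inspecting the chain above, each comparison is either an absolute numerical inequality (the $3/4$ bound, the factor $1/2$ from \lemref{do}) or the universal two-sided bound from \lemref{qnp}(ii), so this is automatic.
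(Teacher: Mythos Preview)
Your proposal is correct and essentially identical to the paper's own proof: both estimate the numerator via $q_{n+2}>2q_n$ to get $q_{n+2}^2-q_n^2\asymp q_{n+2}^2$, and the denominator via \lemref{do} and \lemref{qnp}(ii) together with the small-angle estimate $\sin^2 x \asymp x^2$ to get $\delta_n-\delta_{n+2}\asymp \delta_n\asymp 1/q_{n+1}^2$. Your added remark about tracking the universality of the implied constants is a nice touch but does not depart from the paper's argument.
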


\begin{proof}
The inequality
$$
q_{n+2} \geq q_{n+1}+q_n > 2q_n,
$$
shows that
$$
\frac{3}{4} q_{n+2}^2 < q_{n+2}^2-q_n^2 < q_{n+2}^2.
$$
Also, $\delta_n > 2 \delta_{n+2}$ by \lemref{do}, so
$$
\frac{1}{2} \delta_n < \delta_n - \delta_{n+2} < \delta_n.
$$
Finally, the expansion $\sin^2 x = x^2 + o(x^2)$ as $x \to 0$ together with \lemref{qnp}(ii) show that
$$
\delta_n \asymp \| q_n \alpha \|^2 \asymp \frac{1}{q_{n+1}^2}.
$$
Putting these results together, and using the formula \eqref{rr1}, we obtain
\[
r_{n,n+2}^2 = \frac{q_{n+2}^2-q_n^2}{\delta_n - \delta_{n+2}} \asymp \frac{q_{n+2}^2}{\delta_n} \asymp q_{n+1}^2 q_{n+2}^2. \hfill \qedhere
\]
\end{proof}

Finding sharp asymptotics for $r_{n,n+1}$ is slightly more subtle. We first need a preliminary estimate:

\begin{lemma}\label{del}
As $n \to \infty$, the following asymptotically universal bound holds:
$$
\delta_n - \delta_{n+1} \asymp
\begin{cases}
\dfrac{1}{q_{n+1}^2} & \qquad \text{if} \ a_{n+2} \geq 2 \vs \\
\dfrac{1}{q_{n+1} q_{n+3}} & \qquad \text{if} \ a_{n+2} =1.
\end{cases}
$$
\end{lemma}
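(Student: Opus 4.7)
The plan is to factor $\delta_n-\delta_{n+1}$ using the trigonometric identity $\sin^2 a - \sin^2 b = \sin(a+b)\sin(a-b)$. With $a = \pi \|q_n\alpha\|$ and $b = \pi\|q_{n+1}\alpha\|$ this gives
$$
\delta_n - \delta_{n+1} \ =\  4 \sin\!\bigl(\pi(\|q_n\alpha\|+\|q_{n+1}\alpha\|)\bigr)\,\sin\!\bigl(\pi(\|q_n\alpha\|-\|q_{n+1}\alpha\|)\bigr).
$$
Since $\|q_k\alpha\|\to 0$, for large $n$ all sine arguments appearing above lie in a small neighborhood of $0$, and the elementary two-sided bound $\sin x \asymp x$ on $[0,\pi/2]$ lets us replace each sine by its argument, yielding the asymptotically universal equivalence
$$
\delta_n-\delta_{n+1} \ \asymp\ \bigl(\|q_n\alpha\|+\|q_{n+1}\alpha\|\bigr)\bigl(\|q_n\alpha\|-\|q_{n+1}\alpha\|\bigr).
$$

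The next step is to unpack the difference factor via \lemref{qnp}(iii), which gives the exact identity
$$
\|q_n\alpha\|-\|q_{n+1}\alpha\| \ =\ (a_{n+2}-1)\|q_{n+1}\alpha\|+\|q_{n+2}\alpha\|.
$$
If $a_{n+2}\geq 2$, then $\|q_n\alpha\|\geq a_{n+2}\|q_{n+1}\alpha\|\geq 2\|q_{n+1}\alpha\|$, so both the sum and the difference above are comparable to $\|q_n\alpha\|$ (the sum from above and below, the difference because it lies between $\|q_n\alpha\|/2$ and $\|q_n\alpha\|$). Therefore $\delta_n-\delta_{n+1}\asymp \|q_n\alpha\|^2 \asymp 1/q_{n+1}^2$ by \lemref{qnp}(ii). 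If instead $a_{n+2}=1$, the identity collapses to $\|q_n\alpha\|-\|q_{n+1}\alpha\|=\|q_{n+2}\alpha\|$, while $\|q_n\alpha\|+\|q_{n+1}\alpha\|=2\|q_{n+1}\alpha\|+\|q_{n+2}\alpha\|\asymp \|q_{n+1}\alpha\|$ since $\|q_{n+2}\alpha\|<\|q_{n+1}\alpha\|$. Hence $\delta_n-\delta_{n+1}\asymp \|q_{n+1}\alpha\|\cdot \|q_{n+2}\alpha\| \asymp 1/(q_{n+2}q_{n+3})$ by \lemref{qnp}(ii), and finally the relation $q_{n+2}=q_{n+1}+q_n\asymp q_{n+1}$ (valid precisely because $a_{n+2}=1$) converts this to $1/(q_{n+1}q_{n+3})$, as required.

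There is no genuine obstacle, only careful bookkeeping: the product-to-sum factorization reduces the problem to asymptotics of $\|q_k\alpha\|$, which are controlled by \lemref{qnp}. The only subtlety is the dichotomy in the second factor: when $a_{n+2}\geq 2$, the difference $\|q_n\alpha\|-\|q_{n+1}\alpha\|$ is comparable to the larger norm $\|q_n\alpha\|$, while when $a_{n+2}=1$ it drops by one convergent level down to $\|q_{n+2}\alpha\|$, which is exactly what produces the two distinct regimes in the statement.
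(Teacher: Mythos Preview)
Your proof is correct and arguably cleaner than the paper's. The paper treats the two cases by different devices: for $a_{n+2}\geq 2$ it invokes \lemref{do} to get $\delta_n>2\delta_{n+1}$ and hence $\delta_n-\delta_{n+1}\asymp\delta_n$, while for $a_{n+2}=1$ it applies the mean value theorem to $\sin^2$ to write $(\delta_n-\delta_{n+1})/(\|q_n\alpha\|-\|q_{n+1}\alpha\|)\asymp\|q_n\alpha\|$, and then combines with $\|q_n\alpha\|-\|q_{n+1}\alpha\|=\|q_{n+2}\alpha\|$. Your single factorization $\sin^2 a-\sin^2 b=\sin(a+b)\sin(a-b)$ handles both cases uniformly and makes the dichotomy transparent: the sum factor is always $\asymp\|q_n\alpha\|$, while the difference factor drops from $\asymp\|q_n\alpha\|$ to exactly $\|q_{n+2}\alpha\|$ when $a_{n+2}=1$. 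The final conversion $q_{n+2}\asymp q_{n+1}$ under $a_{n+2}=1$ is a step the paper does not need (it lands directly on $\|q_n\alpha\|\,\|q_{n+2}\alpha\|\asymp 1/(q_{n+1}q_{n+3})$), but it is harmless. Overall your route is shorter and avoids the separate appeal to \lemref{do}.
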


\begin{proof}
If $a_{n+2} \geq 2$, \lemref{do} shows that $\delta_n > 2\delta_{n+1}$. Hence,
$$
\delta_n -\delta_{n+1} \asymp \delta_n \asymp \frac{1}{q_{n+1}^2}.
$$
Next, suppose $a_{n+2}=1$. By the mean value theorem, if $0<x<y<\pi/4$,
$$
\frac{\sin^2 y - \sin^2 x}{y-x} = \sin (2z) \qquad \text{for some} \ z \in (x,y).
$$
Since $4z/\pi < \sin (2z) < 2z$ for $0<z<\pi/4$, this gives
$$
\frac{4}{\pi} x < \frac{\sin^2 y - \sin^2 x}{y-x} < 2y \qquad \Bigl( 0<x<y<\frac{\pi}{4} \Bigr).
$$
Substituting $x=\pi \| q_{n+1} \alpha \|$ and $y=\pi \| q_n \alpha \|$, which are both in $(0,\pi/4)$ for $n \geq 3$, we obtain
$$
16 \pi \| q_{n+1} \alpha \| < \frac{\delta_n-\delta_{n+1}}{\| q_n \alpha \| - \| q_{n+1} \alpha \|} < 8\pi^2 \| q_n \alpha \|.
$$
But by \lemref{qnp}(iii), $a_{n+2}=1$ implies $\| q_n \alpha \| = \| q_{n+1} \alpha \| + \| q_{n+2} \alpha \| < 2 \| q_{n+1} \alpha \|$, which shows
$$
8\pi \| q_n \alpha \| \ \| q_{n+2} \alpha \| < \delta_n - \delta_{n+1} < 8\pi^2 \| q_n \alpha \| \ \| q_{n+2} \alpha \|.
$$
Thus, by \lemref{qnp}(ii),
\[
\delta_n - \delta_{n+1} \asymp \| q_n \alpha \| \ \| q_{n+2} \alpha \| \asymp \frac{1}{q_{n+1} q_{n+3}}. \hfill \qedhere
\]
\end{proof}

\begin{lemma}\label{ttn}
As $n \to \infty$, the following asymptotically universal bound holds:
$$
r_{n,n+1} \asymp
\begin{cases}
q_{n+1}^2 & \qquad \text{if} \ a_{n+2} \geq 2, a_{n+1} \geq 2 \vs \\
q_{n+1}^{3/2} \, q_{n-1}^{1/2} & \qquad \text{if} \ a_{n+2} \geq 2, a_{n+1} =1  \vs \\
q_{n+1}^{3/2} \, q_{n+3}^{1/2} & \qquad \text{if} \ a_{n+2} =1, a_{n+1} \geq 2  \vs \\
q_{n+1} \, q_{n-1}^{1/2} \, q_{n+3}^{1/2} & \qquad \text{if} \ a_{n+2} =1, a_{n+1} =1 \\
\end{cases}
$$
\end{lemma}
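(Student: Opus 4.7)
The strategy is direct: start from the formula
$$r_{n,n+1}^2 = \frac{q_{n+1}^2-q_n^2}{\delta_n-\delta_{n+1}}$$
given by \eqref{rr1}, plug in the estimates for the denominator supplied by \lemref{del}, and match them with two cases for the numerator $q_{n+1}^2 - q_n^2$ according to whether $a_{n+1} \geq 2$ or $a_{n+1}=1$. Combining the two dichotomies (on $a_{n+2}$ and on $a_{n+1}$) gives the four cases in the statement.

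The numerator is handled as follows. Factor
$$q_{n+1}^2-q_n^2 = (q_{n+1}-q_n)(q_{n+1}+q_n) = \bigl((a_{n+1}-1)q_n+q_{n-1}\bigr)(q_{n+1}+q_n),$$
using the recursion $q_{n+1}=a_{n+1}q_n+q_{n-1}$. If $a_{n+1}\geq 2$, then $q_{n+1}\geq 2q_n$, so both factors are asymptotically comparable to $q_{n+1}$, giving $q_{n+1}^2-q_n^2 \asymp q_{n+1}^2$. If $a_{n+1}=1$, then $q_{n+1}-q_n = q_{n-1}$ and $q_{n+1}+q_n \asymp q_{n+1}$, giving $q_{n+1}^2-q_n^2 \asymp q_{n+1}q_{n-1}$.

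Substituting the two values of the numerator and the two values of $\delta_n-\delta_{n+1}$ from \lemref{del} yields all four asymptotics by inspection:
\begin{align*}
 a_{n+2}\!\geq\!2,\ a_{n+1}\!\geq\!2 &: \quad r_{n,n+1}^2 \asymp q_{n+1}^2 \cdot q_{n+1}^2 = q_{n+1}^4, \\
 a_{n+2}\!\geq\!2,\ a_{n+1}\!=\!1 &: \quad r_{n,n+1}^2 \asymp q_{n+1}q_{n-1} \cdot q_{n+1}^2 = q_{n+1}^3 q_{n-1}, \\
 a_{n+2}\!=\!1,\ a_{n+1}\!\geq\!2 &: \quad r_{n,n+1}^2 \asymp q_{n+1}^2 \cdot q_{n+1}q_{n+3} = q_{n+1}^3 q_{n+3}, \\
 a_{n+2}\!=\!1,\ a_{n+1}\!=\!1 &: \quad r_{n,n+1}^2 \asymp q_{n+1}q_{n-1} \cdot q_{n+1}q_{n+3} = q_{n+1}^2 q_{n-1} q_{n+3}.
\end{align*}
Taking square roots gives exactly the four claimed bounds.

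There is no serious obstacle here: the proof is a bookkeeping exercise once \lemref{del} and the factorization of $q_{n+1}^2-q_n^2$ are in hand. The one point requiring care is that all four comparisons are asymptotically universal, i.e.\ the implicit constants do not depend on $\alpha$; this is automatic because every estimate we invoke (from \lemref{del} and from the elementary fact $q_{n+1}\geq 2q_n$ when $a_{n+1}\geq 2$) is already asymptotically universal.
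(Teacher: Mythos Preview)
Your proof is correct and follows essentially the same approach as the paper: both combine the formula \eqref{rr1} with the dichotomy for the numerator $q_{n+1}^2-q_n^2$ (according to whether $a_{n+1}\geq 2$ or $a_{n+1}=1$) and the estimates for $\delta_n-\delta_{n+1}$ from \lemref{del}. Your version is slightly more explicit in factoring the numerator, but the argument is the same.
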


\begin{proof}
Since $q_{n+1}=a_{n+1}q_n+q_{n-1}$, it is easy to see that
$$
q_{n+1}^2 - q_n^2 \asymp
\begin{cases}
q_{n+1}^2 & \qquad \text{if} \ a_{n+1} \geq 2 \vs \\
q_{n+1} q_{n-1} & \qquad \text{if} \ a_{n+1} =1.
\end{cases}
$$
The result follows from this, \lemref{del}, and the formula \eqref{rr1} for $r_{n,n+1}$.
\end{proof}

\begin{theorem}\label{fund}
Suppose $u_{q_n}$ is a constituent of $\bb$ and $[x,y]$ is the maximal interval on which $u_{q_n}=\bb$. Then, as $n \to \infty$, the following asymptotically universal bounds hold:
$$
x \asymp q_n^2 \qquad \text{and} \qquad y \asymp q_{n+1}^2.
$$
\end{theorem}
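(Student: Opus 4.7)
The plan is to prove $y\asymp q_{n+1}^2$ first, and then obtain $x\asymp q_n^2$ from a symmetric argument. By \corref{who} combined with \thmref{p/a}, $y$ equals $r_{n,n^*}$ where $n^*\in\{n+1,n+2\}$ is the next present index after $n$. If $n^*=n+2$, then \thmref{p/a} forces $a_{n+2}=1$, which makes $q_{n+2}=q_{n+1}+q_n\asymp q_{n+1}$, so \lemref{tn1} immediately gives $y\asymp q_{n+1}q_{n+2}\asymp q_{n+1}^2$.

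The substantive case is $n^*=n+1$, in which \lemref{ttn} computes $r_{n,n+1}$ in four sub-cases depending on whether each of $a_{n+1},a_{n+2}$ equals $1$ or is $\geq 2$. When both are $\geq 2$ the formula already reads $r_{n,n+1}\asymp q_{n+1}^2$. In the other three sub-cases an extra factor $q_{n-1}^{1/2}$ or $q_{n+3}^{1/2}$ appears, and my plan is to absorb it by invoking \corref{sufi} to turn the hypotheses that $q_n$ and $q_{n+1}$ are present into fairness of the appropriate triples: whenever $a_{n+1}=1$ the triple $(n-1,n,n+1)$ is fair, giving $r_{n-1,n}<r_{n,n+1}$, and whenever $a_{n+2}=1$ the triple $(n,n+1,n+2)$ is fair, giving $r_{n,n+1}<r_{n,n+2}$. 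Substituting the asymptotics of \lemref{tn1}, \lemref{del}, and \lemref{ttn} on each side, each fairness inequality translates into a universal bound: $q_{n-1}\asymp q_{n+1}$ in the sub-case $a_{n+1}=1,a_{n+2}\geq 2$; $q_{n+3}\asymp q_{n+1}$ in the sub-case $a_{n+1}\geq 2,a_{n+2}=1$; and $q_{n-1}q_{n+3}\asymp q_{n+1}^2$ in the doubly-$1$ sub-case. In every instance the formula of \lemref{ttn} collapses to $r_{n,n+1}\asymp q_{n+1}^2$.

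For $x$, the key observation is that $x$ is also the right endpoint of the maximal constituent interval of $u_{q_{n^{**}}}$, where $n^{**}\in\{n-1,n-2\}$ is the previous present index. The $y$-estimate applied at index $n^{**}$ therefore gives $x\asymp q_{n^{**}+1}^2$. If $n^{**}=n-1$ this is $q_n^2$ on the nose; if $n^{**}=n-2$, \thmref{p/a} forces $a_n=1$, whence $q_n=q_{n-1}+q_{n-2}\asymp q_{n-1}=q_{n^{**}+1}$, and again $x\asymp q_n^2$.

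The main obstacle will be the bookkeeping in the three non-trivial sub-cases for $y$. A strict inequality such as $r_{n-1,n}<r_{n,n+1}$ is not a priori an asymptotic statement, so I will need to verify that after the $\asymp$ formulas of \lemref{del} and \lemref{ttn} are substituted, the ratio of the two sides remains bounded by a universal constant. The doubly-$1$ sub-case $a_{n+1}=a_{n+2}=1$ will be the most delicate, since both fair triples must be used simultaneously; here I will exploit the mediant identity
$$
r_{n-1,n+1}^2=\frac{(q_{n+1}^2-q_n^2)+(q_n^2-q_{n-1}^2)}{(\delta_n-\delta_{n+1})+(\delta_{n-1}-\delta_n)},
$$
which shows that $r_{n-1,n+1}^2$ lies between $r_{n-1,n}^2$ and $r_{n,n+1}^2$ and thereby reduces the two-sided fairness condition to a single comparison.
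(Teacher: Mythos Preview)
Your proposal is correct and follows the same overall strategy as the paper (reduce to $y=r_{n,n+1}$ or $r_{n,n+2}$, handle the latter via \lemref{tn1}, and treat the four sub-cases of \lemref{ttn}), but the sub-case handling is organized differently. In Cases A and B ($a_{n+1}=1,\,a_{n+2}\geq 2$ and its mirror), the paper invokes the Appendix to obtain explicit bounds $a_n\leq 4$ (resp.\ $a_{n+3}\leq 4$), whereas you derive the needed comparability $q_{n-1}\asymp q_{n+1}$ (resp.\ $q_{n+3}\asymp q_{n+1}$) directly from the fairness inequalities of \corref{sufi}; this is more self-contained and avoids the forward reference. In Case C ($a_{n+1}=a_{n+2}=1$), the paper sub-splits on whether $a_n\geq 2$ or $a_n=1$ and, in the former, recursively applies Case B at index $n-1$; your approach instead sandwiches $r_{n-1,n+1}<r_{n,n+1}<r_{n,n+2}$ with both outer terms $\asymp q_{n+1}^2$ by \lemref{tn1}, which handles Case C uniformly. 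The mediant identity you flag is true but unnecessary: fairness of $(n-1,n,n+1)$ already gives $r_{n-1,n+1}<r_{n,n+1}$ directly, so no additional reduction is needed. Your treatment of $x$ via the previous present index $n^{**}$ is exactly the symmetric argument the paper leaves implicit.
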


\begin{proof}
By \corref{who} and \thmref{p/a}, $y=r_{n,n+1}$ if $q_{n+1}$ is present, and $y=r_{n,n+2}$ if $q_{n+1}$ is absent. Similarly, $x=r_{n-1,n}$ if $q_{n-1}$ is present, and $x=r_{n-2,n}$ if $q_{n-1}$ is absent. Therefore, to prove the theorem, we need to show that as $n \to \infty$,
$$
\begin{cases}
r_{n,n+1} \asymp q_{n+1}^2 & \quad \text{if} \ q_{n+1} \ \text{is present} \\
r_{n,n+2} \asymp q_{n+1}^2 & \quad \text{if} \ q_{n+1} \ \text{is absent}
\end{cases}
$$
The second case is almost immediate: If $q_{n+1}$ is absent, then $a_{n+2}=1$ by \thmref{p/a}, which gives $q_{n+1}<q_{n+2}=q_{n+1}+q_n<2q_{n+1}$, or $q_{n+2} \asymp q_{n+1}$. Hence, by \lemref{tn1},
$$
r_{n,n+2} \asymp q_{n+1} q_{n+2} \asymp q_{n+1}^2.
$$
Let us then consider the first case, where $q_{n+1}$ is present. Of the four estimates for $r_{n,n+1}$ covered by \lemref{ttn}, the first is automatic, so let us consider the remaining three cases: \vs

$\bullet$ Case A: $a_{n+2} \geq 2, a_{n+1} =1$. Since $q_n$ is present, we must have $a_n \leq 4$ (see the Appendix). Hence
$$
q_{n+1}=q_n+q_{n-1} = (a_n q_{n-1}+q_{n-2})+q_{n-1} < 6 q_{n-1},
$$
which shows
$$
r_{n,n+1} \asymp q_{n+1}^{3/2} \, q_{n-1}^{1/2} \asymp q_{n+1}^2. \vs
$$

$\bullet$ Case B: $a_{n+2} = 1, a_{n+1} \geq 2$. Since $q_{n+1}$ is present, we must have $a_{n+3} \leq 4$ (again, see the Appendix). Hence
$$
q_{n+3}=a_{n+3} q_{n+2}+q_{n+1} \leq 4q_{n+2}+q_{n+1} = 4(q_{n+1}+q_n)+q_{n+1} < 6 q_{n+1},
$$
which shows
$$
r_{n,n+1} \asymp q_{n+1}^{3/2} \, q_{n+3}^{1/2} \asymp q_{n+1}^2. \vs
$$

$\bullet$ Case C: $a_{n+2} = a_{n+1} = 1$. We consider two sub-cases: If $a_n \geq 2$, then $q_{n-1}$ is present by \corref{sufi}. By case B (applied to $n-1$), $r_{n-1,n} \asymp q_n^2$. Since $r_{n-1,n} \leq r_{n,n+1} \leq r_{n,n+2}$, we have
$$
q_n^2 \preccurlyeq r_{n,n+1} \preccurlyeq q_{n+1}q_{n+2}.
$$
But $a_{n+1}=a_{n+2}=1$ shows that $q_n \asymp q_{n+1} \asymp q_{n+2}$, which proves $r_{n,n+1} \asymp q_{n+1}^2$. \vs

On the other hand, if $a_n=1$, we use $r_{n-2,n} \leq r_{n,n+1} \leq r_{n,n+2}$, which by \lemref{tn1} shows
$$
q_{n-1} q_n \preccurlyeq r_{n,n+1} \preccurlyeq q_{n+1}q_{n+2}.
$$
But $a_n=a_{n+1}=a_{n+2}=1$ shows that $q_{n-1} \asymp q_n \asymp q_{n+1} \asymp q_{n+2}$, which proves $r_{n,n+1} \asymp q_{n+1}^2$.
\end{proof}

We now have all the necessary ingredients to prove Theorem A in \S \ref{sec:intro}. Recall from \S \ref{sec:p} that ${\mathscr D}_\nu$ is the set of Diophantine numbers of exponent $\nu \geq 2$.

\begin{proof}[Proof of Theorem A]
Set $\bb=\bb_\alpha$. Given large $r>0$, suppose $\bb(r)=u_{q_n}(r)$ for some $n$. Let $[x,y]$ be the maximal interval on which $\bb=u_{q_n}$. By \thmref{fund}, $x \asymp q_n^2$ and $y \asymp q_{n+1}^2$. Hence,
$$
\bb(x) = c(\ve) \sqrt{\delta_n x^2 + q_n^2} \asymp \sqrt{\frac{q_n^4}{q_{n+1}^2}+q_n^2} \asymp q_n \asymp \sqrt{x}
$$
and
$$
\bb(y)= c(\ve) \sqrt{\delta_n y^2 + q_n^2} \asymp \sqrt{\frac{q_{n+1}^4}{q_{n+1}^2}+q_n^2} \asymp q_{n+1} \asymp \sqrt{y}.
$$
Thus, at the end points of the interval $[x,y]$, $\bb$ is comparable to the square root function. Since $\bb=u_{q_n}$ is convex and the square root function is concave on this interval, it follows that $\bb(r) \preccurlyeq \sqrt{r}$ for all $r \in [x,y]$. \vs

Now suppose $\alpha \in {\mathscr D}_\nu$, so $q_{n+1} \preccurlyeq q_n^{\nu-1}$. Then, with $u_{q_n}$ and $[x,y]$ as above and $r \in [x,y]$, we have
$$
\bb(r) \succcurlyeq \bb(x) \asymp \sqrt{x} \asymp q_n \succcurlyeq q_{n+1}^{1/(\nu-1)} \asymp y^{1/(2\nu-2)} \succcurlyeq r^{1/(2\nu-2)},
$$
as required.
\end{proof}

\begin{corollary}\label{log}
For Lebesgue almost every irrational number $\alpha \in \TT$,
$$
\lim_{r \to \infty} \frac{\log \bb_\alpha(r)}{\log r} = \frac{1}{2}.
$$
\end{corollary}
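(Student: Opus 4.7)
The plan is to combine the two bounds from Theorem A with the standard measure-theoretic properties of the Diophantine classes ${\mathscr D}_\nu$. The upper bound will handle the $\limsup$ uniformly in $\alpha$, while the lower bound will be recovered on a full-measure set by letting the Diophantine exponent approach $2$ from above.

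First, from Theorem A (upper bound) we get $\bb_\alpha(r) \leq \con \sqrt{r}$ for large $r$, valid for \emph{every} irrational $\alpha \in \TT$. Taking logarithms and dividing by $\log r$, this immediately yields
$$
\limsup_{r \to \infty} \frac{\log \bb_\alpha(r)}{\log r} \leq \frac{1}{2}
$$
for every irrational $\alpha$.

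For the matching lower bound, the key observation is that the set $\bigcap_{k \geq 1} {\mathscr D}_{2+1/k}$ has full Lebesgue measure in $\TT$, since each ${\mathscr D}_\nu$ with $\nu > 2$ has full measure as recalled in \S\ref{sec:p}, and a countable intersection of full-measure sets has full measure. Fix an $\alpha$ in this intersection. For each $k \geq 1$, applying the second part of Theorem A with exponent $\nu_k = 2 + 1/k$ yields $\bb_\alpha(r) \geq \con \, r^{1/(2\nu_k-2)}$ for large $r$, whence
$$
\liminf_{r \to \infty} \frac{\log \bb_\alpha(r)}{\log r} \geq \frac{1}{2\nu_k - 2} = \frac{k}{2k+2}.
$$
Letting $k \to \infty$ gives $\liminf_{r \to \infty} \log \bb_\alpha(r)/\log r \geq 1/2$, which combined with the upper bound gives the desired equality for almost every $\alpha$.

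There is no real obstacle here: the corollary is a direct consequence of Theorem A together with the classical measure-theoretic fact about Diophantine classes. The only small point to be careful about is that the constant in the lower bound depends on $\nu$ (hence on $k$) and on $\alpha$, but this is harmless because the $r^{1/(2\nu-2)}$ factor dominates any constant when taking $\log/\log r$ as $r \to \infty$.
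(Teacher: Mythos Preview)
Your proof is correct and follows essentially the same approach as the paper: both use the full-measure set $\bigcap_{\nu>2}\mathscr D_\nu$ (which coincides with your $\bigcap_{k\ge 1}\mathscr D_{2+1/k}$ by monotonicity of the Diophantine classes), apply the two halves of Theorem~A to bound the $\limsup$ and $\liminf$, and then let the exponent tend to $2^+$. Your explicit remark about the $\nu$- and $\alpha$-dependent constants being harmless after dividing by $\log r$ is exactly the point the paper handles implicitly.
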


\begin{proof}
Let $\alpha$ belong to the full-measure set $\bigcap_{\nu>2} {\mathscr D}_{\nu}$. By Theorem A, for every $\nu>2$ there are positive constants $K_1=K_1(\nu)$, $K_2=K_2(\nu)$, and $R=R(\nu)$ such that $\bb=\bb_\alpha$ satisfies
$$
K_1 + \frac{1}{2\nu-2} \log r \leq \log \bb(r) \leq K_2 + \frac{1}{2} \log r \qquad \text{for all} \ r>R.
$$
This gives
$$
\frac{1}{2\nu-2} \leq \liminf_{r \to \infty} \frac{\log \bb(r)}{\log r} \leq \limsup_{r \to \infty} \frac{\log \bb(r)}{\log r} \leq \frac{1}{2}
$$
and the result follows by letting $\nu \to 2^+$.
\end{proof}

By contrast, we can construct irrationals of Liouville type for which the boundary function has arbitrarily slow growth over long intervals, and the construction is quite flexible. As an example, we prove the following

\begin{theorem}\label{lio}
There exist irrational numbers $\alpha \in \TT$ for which
$$
\liminf_{r \to \infty} \frac{\log \bb_\alpha(r)}{\log r} = 0 < \frac{1}{2} = \limsup_{r \to \infty} \frac{\log \bb_\alpha(r)}{\log r}.
$$
\end{theorem}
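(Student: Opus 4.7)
The plan is to construct a specific Liouville-type $\alpha$ whose partial quotients grow so rapidly that between the endpoints of each maximal interval $[x_n,y_n]$ on which $u_{q_n}=\bb_\alpha$, the boundary function stays essentially of size $q_n$ while $r$ sweeps through a logarithmically enormous range. I would choose $\alpha=[a_1,a_2,a_3,\ldots]$ with every $a_n\geq 2$; by \corref{sufi} the condition $a_{n+1}\geq 2$ alone forces every $q_n$ to be present, so \thmref{fund} applies to every $n$ and supplies the asymptotically universal bounds $x_n\asymp q_n^2$ and $y_n\asymp q_{n+1}^2$.

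The $\limsup$ statement is then immediate: exactly as in the proof of Theorem A, $\bb_\alpha(x_n)=u_{q_n}(x_n)\asymp\sqrt{x_n}$, so $\log\bb_\alpha(x_n)/\log x_n\to 1/2$; combined with the universal upper bound $\bb_\alpha(r)\preccurlyeq\sqrt{r}$ from Theorem A, this yields $\limsup_{r\to\infty}\log\bb_\alpha(r)/\log r=1/2$ for \emph{any} $\alpha$ with the above property.

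For the $\liminf$ I would probe $\bb_\alpha$ at the interior points $r_n:=q_n q_{n+1}$. Using $\delta_n\asymp 1/q_{n+1}^2$, a direct computation gives
$$
u_{q_n}(r_n)=c(\ve)\sqrt{\delta_n q_n^2 q_{n+1}^2+q_n^2}\asymp q_n.
$$
Provided $a_{n+1}$ exceeds the universal implicit constants of \thmref{fund}, we have $r_n\in[x_n,y_n]$, so $\bb_\alpha(r_n)\asymp q_n$ and
$$
\frac{\log\bb_\alpha(r_n)}{\log r_n}\asymp\frac{\log q_n}{\log q_n+\log q_{n+1}}.
$$
Now I would impose the explicit growth condition $a_{n+1}\geq 2^{q_n}$ for every $n$, which gives $\log q_{n+1}\geq q_n\log 2$ and hence $\log q_n/\log q_{n+1}\to 0$. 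The above ratio therefore tends to $0$ along $r_n\to\infty$, producing $\liminf_{r\to\infty}\log\bb_\alpha(r)/\log r=0$.

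The main obstacle is purely bookkeeping: one must verify that the chosen interior point $r_n$ really lies in the maximal interval $[x_n,y_n]$, which reduces to requiring $a_{n+1}$ to dominate the (fixed) universal constant supplied by \thmref{fund}. Under the tower-type prescription this is automatic for all large $n$, so nothing else in the construction is delicate; indeed, the prescription $a_{n+1}\geq f(q_n)$ works for any unbounded function $f$, which accounts for the flexibility hinted at just before the theorem statement.
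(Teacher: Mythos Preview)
Your proof is correct and follows essentially the same route as the paper: probe $\bb_\alpha$ at a point of order $q_n q_{n+1}$ inside the maximal interval where $u_{q_n}=\bb_\alpha$, and impose exponential-type growth on the partial quotients. The only cosmetic difference is that the paper takes the probe point to be the geometric mean $z=\sqrt{xy}$, which lies in $[x,y]$ automatically and satisfies $z\asymp q_n q_{n+1}$ by \thmref{fund}, thereby sidestepping the bookkeeping step you flagged (and also making the extra hypothesis $a_n\geq 2$ unnecessary).
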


\begin{proof}
Set $\bb=\bb_\alpha$. By Theorem A, $\limsup_{r \to \infty} \log \bb(r)/\log r \leq 1/2$ for every irrational $\alpha$. Furthermore, if $u_{q_n}=\bb$ on the maximal interval $[x,y]$, the proof of Theorem A shows that $\bb(x) \asymp \sqrt{x}$ and $\bb(y) \asymp \sqrt{y}$. It follows that
$$
\limsup_{r \to \infty} \frac{\log \bb(r)}{\log r}=\frac{1}{2}.
$$
Now suppose $\alpha$ is an irrational whose partial quotients $a_n$ grow so fast that $q_{n+1}$ is of the order of $\exp(q_n)$. Let $u_{q_n}=\bb$ on the maximal interval $[x,y]$ and $z=\sqrt{xy}$. By \thmref{fund}, $x \asymp q_n^2$ and $y \asymp q_{n+1}^2$, so $z \asymp q_n q_{n+1}$. This gives the asymptotic bound
$$
\bb(z) = c(\ve) \sqrt{\delta_n z^2 + q_n^2} \asymp \sqrt{\frac{q_n^2 q_{n+1}^2}{q_{n+1}^2}+q_n^2} \asymp q_n.
$$
Since $\log z \asymp \log q_n + \log q_{n+1} \asymp \log q_n + q_n \asymp q_n$, it follows that
$\log \bb(z) / \log z \asymp (\log q_n)/q_n$. Thus, $\liminf_{r \to \infty} \log \bb(r)/\log r = 0$.
\end{proof}

\begin{figure*}
\centering{\includegraphics[width=0.7\textwidth]{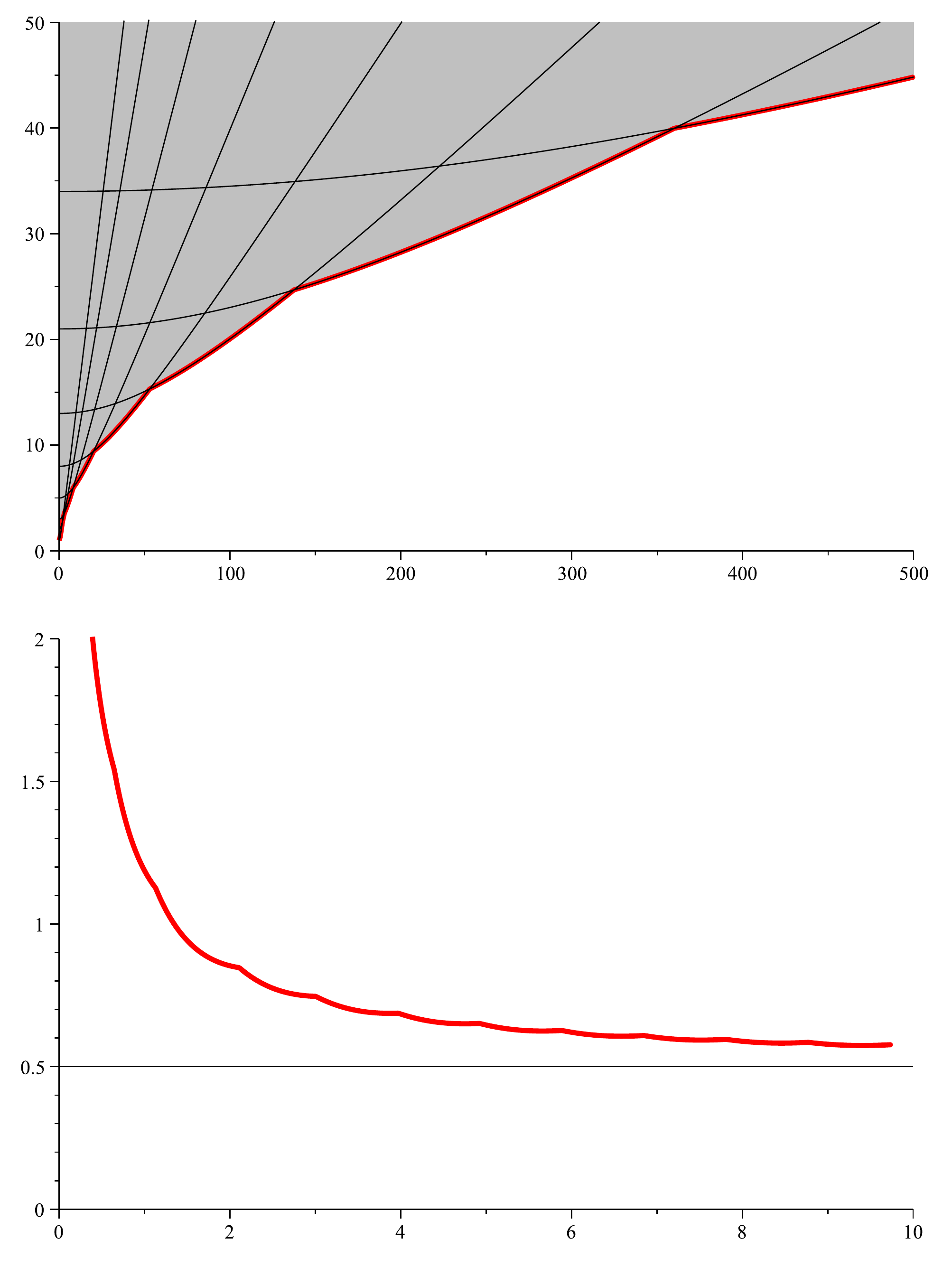}}
\caption{Top: Graphs of the boundary function $\bb_\alpha$ and $\{ u_{\alpha,q_n} \}_{1 \leq n \leq 8}$ for the golden mean $\alpha=[1,1,1,\ldots]$ (we have normalized so $c(\ve)=1$). Here every denominator $q_n$ is present. Bottom: Graph of $s \mapsto \log(\bb_\alpha(e^s))/s$.}
\label{GO} 
\end{figure*}

\begin{figure*}
\centering{\includegraphics[width=0.7\textwidth]{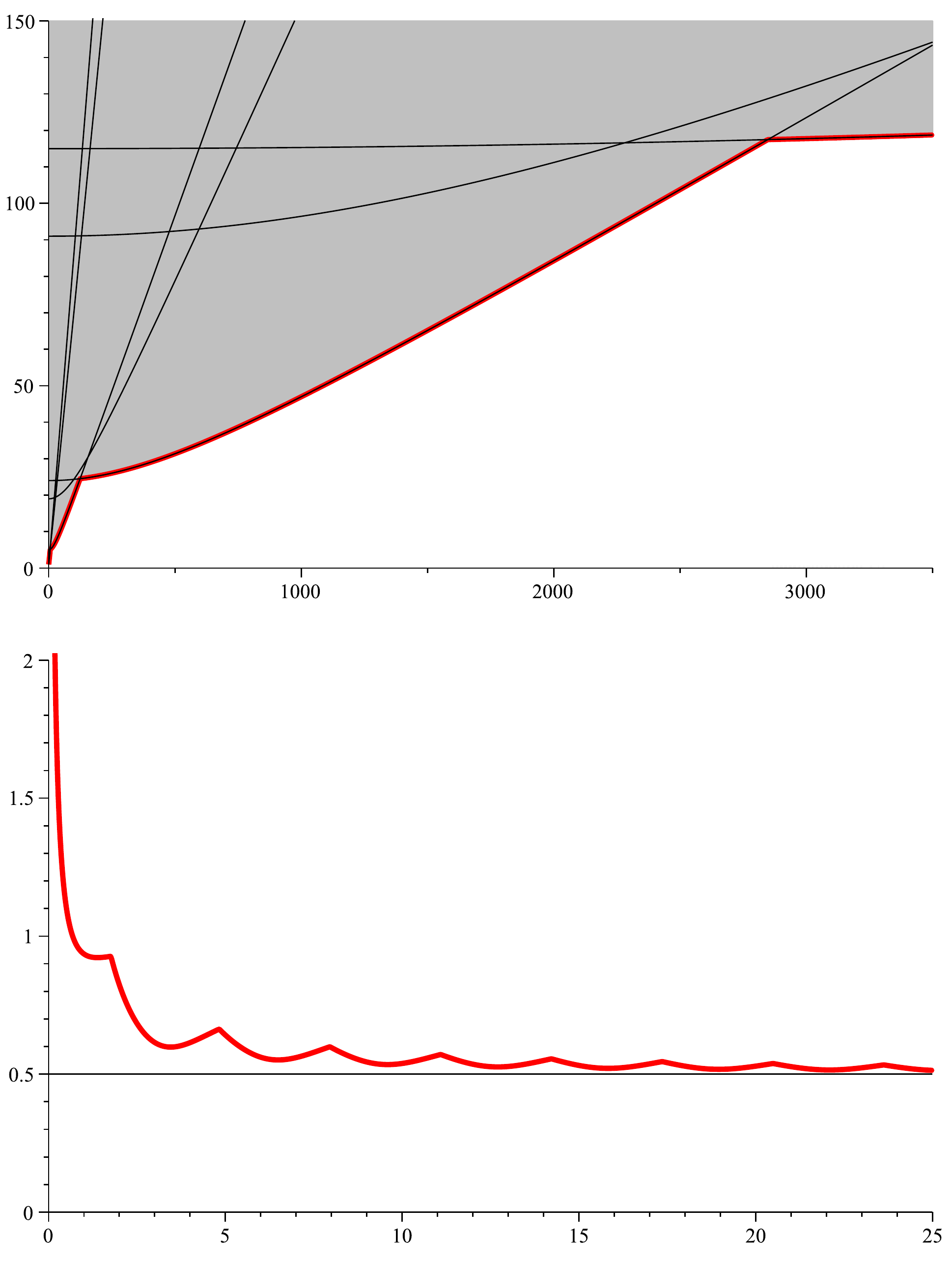}}
\caption{Top: Graphs of the boundary function $\bb_\alpha$ and $\{ u_{\alpha,q_n} \}_{1 \leq n \leq 7}$ for $\alpha=[1,3,1,3,\ldots]$ (we have normalized so $c(\ve)=1$). Here only the $q_n$ with odd $n$ are present. Bottom: Graph of $s \mapsto \log(\bb_\alpha(e^s))/s$.}
\label{TO} 
\end{figure*}

The following theorem illustrates the special role played by bounded type irrationals in this context. It is a sharpened version of the main result of \cite{E}.

\begin{theorem}[Optimality of bounded type]\label{D2}
The asymptotic bound
$$
\bb_\alpha(r) \asymp \sqrt{r} \qquad \text{as} \ r \to \infty
$$
holds if and only if $\alpha \in {\mathscr D}_2$.
\end{theorem}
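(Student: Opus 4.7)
The forward implication is immediate from Theorem A. If $\alpha \in {\mathscr D}_2$, then $\alpha$ is Diophantine of exponent $\nu=2$, so the two bounds in Theorem A combine to give $\con \sqrt{r} \leq \bb_\alpha(r) \leq \con \sqrt{r}$ for large $r$, hence $\bb_\alpha(r) \asymp \sqrt{r}$.

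For the converse, I would argue by contrapositive: assume $\alpha \notin {\mathscr D}_2$, so the partial quotients satisfy $\sup_n a_n=\infty$, and exhibit a sequence $r_k \to \infty$ along which $\bb_\alpha(r_k)/\sqrt{r_k} \to 0$. Pick a subsequence $\{n_k\}$ with $a_{n_k+1} \to \infty$. For $k$ large, $a_{n_k+1} \geq 2$, so \corref{sufi} guarantees that $q_{n_k}$ is a present index. Let $[x_k,y_k]$ be the maximal interval on which $\bb_\alpha = u_{\alpha,q_{n_k}}$. By \thmref{fund}, $x_k \asymp q_{n_k}^2$ and $y_k \asymp q_{n_k+1}^2$. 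Take as test point the ``transition point''
$$
r_k := q_{n_k} \, q_{n_k+1}.
$$
Since $q_{n_k+1}/q_{n_k} \geq a_{n_k+1} \to \infty$, this ratio eventually dominates the hidden constants in $\asymp$, so $r_k \in [x_k, y_k]$ for large $k$.

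On this interval $\bb_\alpha = u_{\alpha,q_{n_k}}$, and using $\delta_{n_k} \asymp \|q_{n_k}\alpha\|^2 \asymp 1/q_{n_k+1}^2$ (from \lemref{qnp}(ii) together with $\sin x \asymp x$ for small $x$, as in the proof of \lemref{tn1}), I compute
$$
\bb_\alpha(r_k) = c(\ve)\sqrt{\delta_{n_k} \, q_{n_k}^2 \, q_{n_k+1}^2 + q_{n_k}^2} \asymp \sqrt{q_{n_k}^2 + q_{n_k}^2} \asymp q_{n_k}.
$$
Comparing with $\sqrt{r_k} = \sqrt{q_{n_k} q_{n_k+1}}$,
$$
\frac{\bb_\alpha(r_k)}{\sqrt{r_k}} \asymp \sqrt{\frac{q_{n_k}}{q_{n_k+1}}} \leq \frac{1}{\sqrt{a_{n_k+1}}} \longrightarrow 0,
$$
contradicting $\bb_\alpha(r) \asymp \sqrt{r}$. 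This completes the proof. The only real content is spotting the right test point $r_k = q_{n_k}q_{n_k+1}$: this is precisely where the two terms $\delta_n r^2$ and $q_n^2$ under the square root in $u_{\alpha,q_n}$ balance, so $u_{\alpha,q_n}$ transitions from being essentially constant ($\asymp q_n$) to growing linearly (with slope $\asymp 1/q_{n+1}$), and it is here that the deficit against $\sqrt{r}$ is maximal. Once this point is identified, everything reduces to invoking \thmref{fund} and \corref{sufi} together with standard continued-fraction estimates.
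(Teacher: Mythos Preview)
Your proof is correct and follows essentially the same approach as the paper: invoke \corref{sufi} to get presence of $q_n$ when $a_{n+1}\geq 2$, use \thmref{fund} for the interval endpoints, and evaluate $\bb_\alpha$ at a test point $\asymp q_n q_{n+1}$ to produce a ratio $\bb_\alpha(r)/\sqrt{r}\asymp\sqrt{q_n/q_{n+1}}$. The only cosmetic difference is that the paper takes the test point to be $z=\sqrt{xy}$, which automatically lies in $[x,y]$ and satisfies $z\asymp q_n q_{n+1}$, whereas you take $r_k=q_{n_k}q_{n_k+1}$ directly and add the (correct) one-line check that it eventually lies in $[x_k,y_k]$.
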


\begin{proof}
The ``if'' part follows from Theorem A. For the ``only if'' part, we need to show that the sequence $\{ a_n \}$ of partial quotients of $\alpha$ is bounded if $\bb=\bb_\alpha$ satisfies $\bb(r) \asymp \sqrt{r}$. Take a large $n$. If $a_{n+1}=1$ there is nothing to prove. If $a_{n+1} \geq 2$, \corref{sufi} shows that $q_n$ is present. As before, let $[x,y]$ be the maximal interval on which $u_{q_n}=\bb$, and set $z=\sqrt{xy}$. As in the proof of \thmref{lio}, we have $z \asymp q_n q_{n+1}$ and $\bb(z) \asymp q_n$. The assumption $\bb(z) \asymp \sqrt{z}$ then implies $q_n q_{n+1} \asymp q_n^2$ or $q_{n+1} \asymp q_n$, proving that $a_{n+1}$ is bounded.
\end{proof}

Two examples illustrating the above results are shown in \figref{GO} and \figref{TO}. For the golden mean angle $\alpha=(\sqrt{5}-1)/2=[1,1,1,\ldots]$, all the $q_n$ are present, whereas for $\alpha=(\sqrt{21}-3)/2=[1,3,1,3,\ldots]$, only the $q_n$ with odd $n$ are present (these assertions will be justified in the Appendix). In both cases, the boundary function $\bb_\alpha(r)$ is asymptotic to $\sqrt{r}$ by \thmref{D2}.

\subsection*{Numerical comments}

The Margulis constant in dimension $4$ is known to satisfy
$$
\ve_4 \geq \frac{\sqrt{3}}{9\pi} = 0.061258\cdots
$$
(see \cite{Ke}) so the constant $c(\ve)$ in the definition of the boundary function can be taken
\begin{equation}\label{CE}
c(\ve) = \frac{1}{\sqrt{2 \cosh(\sqrt{3}/(9\pi))-2}} = 16.321642\cdots
\end{equation}
A tedious but completely straightforward re-working of the proofs of \lemref{tn1}, \lemref{del}, \lemref{ttn}, and \thmref{fund}, the details of which we omit here, yields the following explicit constants in the corresponding inequalities: \vs \\
$\bullet$ \lemref{tn1}:
$$
\frac{\sqrt{3}}{4\pi} \ q_{n+1} q_{n+2} \leq r_{n,n+2} \leq \frac{1}{\sqrt{2}}\ q_{n+1} q_{n+2}. \vs
$$
$\bullet$ \lemref{del}:
$$
\begin{cases}
\dfrac{2}{q_{n+1}^2} \leq \delta_n - \delta_{n+1} \leq \dfrac{4 \pi^2}{q_{n+1}^2} & \quad \text{if} \ a_{n+2} \geq 2 \vs \\
\dfrac{2\pi}{q_{n+1} q_{n+3}} \leq \delta_n - \delta_{n+1} \leq \dfrac{8\pi^2}{q_{n+1} q_{n+3}} & \quad \text{if} \ a_{n+2} =1, n \geq 3.
\end{cases}
$$
$\bullet$ \lemref{ttn}:
$$
\begin{cases}
\dfrac{\sqrt{3}}{4\pi} \ q_{n+1}^2 \leq r_{n,n+1} \leq \dfrac{1}{\sqrt{2}} \  q_{n+1}^2 & \quad \text{if} \ a_{n+2} \geq 2, a_{n+1} \geq 2 \vs \\
\dfrac{1}{2\pi} \ q_{n+1}^{3/2} \, q_{n-1}^{1/2} \leq r_{n,n+1} \leq  q_{n+1}^{3/2} \, q_{n-1}^{1/2} & \quad \text{if} \ a_{n+2} \geq 2, a_{n+1} =1  \vs \\
\dfrac{\sqrt{3}}{\sqrt{32}\pi} \ q_{n+1}^{3/2} \, q_{n+3}^{1/2} \leq r_{n,n+1} \leq \dfrac{1}{\sqrt{2\pi}} \ q_{n+1}^{3/2} \, q_{n+3}^{1/2} & \quad \text{if} \ a_{n+2} =1, a_{n+1} \geq 2, n \geq 3  \vs \\
\dfrac{1}{\sqrt{8}\pi} \ q_{n+1} \, q_{n-1}^{1/2} \, q_{n+3}^{1/2} \leq r_{n,n+1} \leq \dfrac{1}{\sqrt{\pi}} \ q_{n+1} \, q_{n-1}^{1/2} \, q_{n+3}^{1/2} & \quad \text{if} \ a_{n+2} =1, a_{n+1} =1, n \geq 3 \\
\end{cases}
$$
$\bullet$ \thmref{fund}:
If $q_{n+1}$ is absent,
$$
\frac{\sqrt{3}}{4\pi} \leq \frac{r_{n,n+2}}{q_{n+1}^2} \leq \sqrt{2},
$$
and if $q_{n+1}$ is present,
$$
\begin{cases}
\dfrac{\sqrt{3}}{4\pi} \leq \dfrac{r_{n,n+1}}{q_{n+1}^2} \leq \dfrac{1}{\sqrt{2}}  & \quad \text{if} \ a_{n+2} \geq 2, a_{n+1} \geq 2 \vs \\
\dfrac{1}{\sqrt{24}\pi} \leq \dfrac{r_{n,n+1}}{q_{n+1}^2} \leq  \dfrac{1}{\sqrt{2}} & \quad \text{if} \ a_{n+2} \geq 2, a_{n+1} =1, n \geq 5  \vs \\
\dfrac{\sqrt{3}}{4\pi} \leq \dfrac{r_{n,n+1}}{q_{n+1}^2} \leq \dfrac{\sqrt{3}}{\sqrt{\pi}} & \quad \text{if} \ a_{n+2} =1, a_{n+1} \geq 2, n \geq 4  \vs \\
\dfrac{\sqrt{3}}{24\pi} \leq \dfrac{r_{n,n+1}}{q_{n+1}^2} \leq \sqrt{2} & \quad \text{if} \ a_{n+2} =1, a_{n+1} =1, n \geq 5 \\
\end{cases}
$$
Taking the worst case scenario, it follows that the endpoints $x,y$ in \thmref{fund} satisfy
\begin{align*}
\dfrac{\sqrt{3}}{24\pi} \ q_n^2 & \leq x \leq \sqrt{2} \ q_n^2 & \text{if} \ n \geq 6\\
\dfrac{\sqrt{3}}{24\pi} \ q_{n+1}^2 & \leq y \leq \sqrt{2} \ q_{n+1}^2 & \text{if} \ n \geq 5
\end{align*}
These estimates, together with \eqref{CE}, show that
\begin{align*}
\bb(x) = c(\ve) \sqrt{\delta_n x^2 + q_n^2} & \leq c(\ve) \sqrt{\frac{8 \pi^2 q_n^4}{q_{n+1}^2}+q_n^2} \leq c(\ve) \sqrt{8\pi^2+1} \,  q_n \\
& \leq c(\ve) \sqrt{8\pi^2+1} \, \frac{\sqrt{24\pi}}{\sqrt[4]{3}} \, \sqrt{x} \leq 1000 \sqrt{x}.
\end{align*}
and
\begin{align*}
\bb(y) = c(\ve) \sqrt{\delta_n y^2 + q_n^2} & \leq c(\ve) \sqrt{8 \pi^2 q_{n+1}^2+q_n^2} \leq c(\ve) \sqrt{8\pi^2+1} \,  q_{n+1} \\
& \leq c(\ve) \sqrt{8\pi^2+1} \, \frac{\sqrt{24\pi}}{\sqrt[4]{3}} \, \sqrt{y} \leq 1000 \sqrt{y}.
\end{align*}
It follows that $\bb(r) \leq 1000 \sqrt{r}$ if $\bb(r)=u_{q_n}(r)$ for some $n \geq 6$. Since either $q_6$ or $q_7$ is present, the latter condition must hold as soon as $r \geq \sqrt{2} \, q_7^2$.

\begin{corollary}[Universal upper bound]\label{expl}
For every irrational $\alpha \in \TT$, the boundary function $\bb_\alpha$ satisfies
$$
\bb_\alpha(r) \leq 1000 \sqrt{r} \qquad \text{if} \quad r \geq \sqrt{2} \, q_7^2.
$$
\end{corollary}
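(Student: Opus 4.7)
The plan is essentially a bookkeeping exercise: the corollary follows from the explicit numerical estimates assembled in the ``Numerical comments'' paragraph, combined with the same convexity/concavity argument that drove the proof of Theorem A. I would proceed in three steps.

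First, I would check that any $r \geq \sqrt{2}\,q_7^2$ for which $\bb_\alpha(r) = u_{q_n}(r)$ necessarily satisfies $n \geq 6$. By \thmref{p/a} (no consecutive absentees) at least one of $q_6, q_7$ is present, so some maximal interval $[x,y]$ with $n \geq 6$ exists. The upper bound $y \leq \sqrt{2}\,q_{n+1}^2$ from the refined \thmref{fund} then implies that the preceding intervals, all having $n \leq 5$, are contained in $[0,\sqrt{2}\,q_7^2)$; so $r$ lands in an interval with $n \geq 6$.

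Second, fix such an $n$ and let $[x,y]$ be the maximal interval on which $u_{q_n} = \bb_\alpha$. Taking the worst constants among the four cases of the explicit \lemref{ttn} together with the estimate for $r_{n,n+2}$, one obtains
$$\frac{\sqrt{3}}{24\pi}\,q_n^2 \leq x \leq \sqrt{2}\,q_n^2 \qquad \text{and} \qquad \frac{\sqrt{3}}{24\pi}\,q_{n+1}^2 \leq y \leq \sqrt{2}\,q_{n+1}^2.$$
Combining with the explicit $c(\ve) = 16.32\ldots$ from \eqref{CE} and the bound $\delta_n \leq 8\pi^2/q_{n+1}^2$ from the refined \lemref{del}, direct computation at the endpoints gives
$$\bb(x) \leq c(\ve)\sqrt{8\pi^2 + 1}\,q_n \leq c(\ve)\sqrt{8\pi^2+1}\cdot \frac{\sqrt{24\pi}}{\sqrt[4]{3}}\,\sqrt{x} \leq 1000\,\sqrt{x},$$
and the analogous inequality at $y$.

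Third, since $\bb = u_{q_n}$ is convex on $[x,y]$ and $r \mapsto 1000\sqrt{r}$ is concave there, the inequality $\bb(r) \leq 1000\sqrt{r}$ at the two endpoints automatically propagates throughout the interval. Ranging over all such intervals with $n \geq 6$ covers all $r \geq \sqrt{2}\,q_7^2$ and yields the claim. The only real obstacle is numerical bookkeeping: one must verify that after taking the worst case among the four sub-cases in \lemref{ttn}, the product $c(\ve)\,\sqrt{8\pi^2+1}\,\sqrt{24\pi}/\sqrt[4]{3}$ still comes in under $1000$. No new structural idea beyond those already used for Theorem~A is required.
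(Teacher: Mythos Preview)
Your proposal is correct and matches the paper's argument essentially step for step: the paper also derives the explicit endpoint bounds on $[x,y]$ for $n\geq 6$, computes $\bb(x),\bb(y)\leq 1000\sqrt{x},1000\sqrt{y}$ via the same constant $c(\ve)\sqrt{8\pi^2+1}\cdot\sqrt{24\pi}/\sqrt[4]{3}$, and then uses that $q_6$ or $q_7$ is present to cover all $r\geq\sqrt{2}\,q_7^2$; the convexity/concavity interpolation you spell out in Step~3 is left implicit in the paper (it was already used in Theorem~A). One small correction: the bound $\delta_n\leq 4\pi^2/q_{n+1}^2$ (which becomes $8\pi^2$ after multiplying by $x^2\leq 2q_n^4$) comes directly from $\sin^2 t\leq t^2$ and \lemref{qnp}(ii), not from \lemref{del}, which concerns $\delta_n-\delta_{n+1}$.
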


\section{Geometry of Margulis cusps}
\label{sec:geo}

Let $\Gamma \subset \isom$ be a discrete group acting freely on $\HH^4$, and $\infty \in \bd \HH^4$ be a parabolic fixed point of $\Gamma$. By \thmref{cyclic} the stabilizer subgroup $\Gamma_\infty \subset \Gamma$ is cyclic when it contains an irrational screw translation, so after a suitable change of coordinates we can assume that $\Gamma_\infty$ is generated by the map $g_{\alpha}$ of \eqref{ga} for some irrational number $\alpha \in \TT$. Recall that the Margulis region associated with the parabolic fixed point $\infty$ is given by
\begin{equation}\label{MA}
T_\alpha = \{ (r,\theta,z,t) \in \HH^4:  t > \bb_\alpha(r) \}.
\end{equation}
where $\bb_\alpha :[0, \infty) \to \RR$ is defined by \eqref{psi}. The Margulis cusp $C_{\alpha} = T_\alpha / \langle g_\alpha \rangle$ embeds isometrically into the hyperbolic manifold $M=\HH^4/\Gamma$ and forms a connected component of its thin part $\mt$ (see \S \ref{sec:p}). Note that $C_\alpha$ is a universal model depending only on the rotation angle $\alpha$, and in particular it is independent of the rest of the group $\Gamma$ and the manifold $M$. \vs

We remark that the cusp $C_{\alpha}$ is always a {\bit uniformly quasiconvex} subset of $M$ in the following sense: \vs

\noindent
{\it There is a universal $\delta>0$ with the property that any pair of points in $C_{\alpha}$ can be joined by a geodesic in $M$ which stays within the distance $\delta$ from $C_{\alpha}$.} \vs

\noindent
To see this, lift the given points in $C_{\alpha}$ to a pair of points $p,q \in T_{\alpha}$. In the geodesic triangle formed by $p,q,\infty$, the two vertical sides $[p,\infty)$ and $[q,\infty)$ are contained in $T_{\alpha}$ because of \eqref{MA}. There is a universal $\delta>0$ for which all geodesic triangles in $\HH^4$ are $\delta$-thin in the sense of Gromov. For this $\delta$, every point of the third side $[p,q]$ lies within the distance $\delta$ from the union of the other two sides, hence from $T_{\alpha}$. The image of $[p,q]$ in $M$ is then the desired geodesic.       

\subsection*{Topology of Margulis cusps}

For every irrational $\alpha \in \TT$, the Margulis cusp $C_{\alpha}$ is homeomorphic to the product $D^2 \times S^1 \times \RR \cong \RR^3 \times S^1$ and thus has the homotopy type of the circle. For each $t>c(\ve)$ the horosphere at height $t$ based at $\infty$ intersects $T_\alpha$ along the solid cylinder
$$
\hat{L}_t = \{ (r,\theta,z,t) : r < \bb_{\alpha}^{-1}(t) \} \cong D^2 \times \RR,
$$
so the Margulis region $T_\alpha= \bigcup_{t>c(\ve)} \hat{L}_t$ is homeomorphic to $D^2 \times \RR \times \RR$. The horizontal foliation of $T_\alpha$ by the $\hat{L}_t$ is leafwise invariant under the action of $g_\alpha$, so it descends to a product foliation $\ff$ of $C_{\alpha}$ whose leaves are $3$-dimensional solid tori $L_t = \hat{L}_t/ \langle g_\alpha \rangle \cong D^2 \times S^1$ (see \figref{HOR}). The volume of the leaf $L_t$ grows at least linearly in $t$. To see this, note that $L_t$ can be identified with the solid cylinder $\{ (r,\theta,z,t) : r < \bb_{\alpha}^{-1}(t), 0 \leq z \leq 1 \}$ where the two ends are glued by $(r,\theta,0,t) \sim (r, \theta+\alpha, 1,t)$. Hence,
\begin{equation}\label{vol}
\text{vol}(L_t) =  \frac{\pi (\bb_{\alpha}^{-1}(t))^2}{t^3}.
\end{equation}
Since $\bb_{\alpha}(r) \preccurlyeq \sqrt{r}$ by Theorem A, we have $\bb_{\alpha}^{-1}(t) \succcurlyeq t^2$, which shows $\text{vol}(L_t) \succcurlyeq t$. By contrast, the core curve of $L_t$ shrinks as $t \to \infty$ since it can be identified with the segment $\{ (0,\theta,z,t): 0 \leq z \leq 1 \}$ with the two ends glued, and therefore has hyperbolic length $1/t$. Observe that the union of these core curves is homeomorphic to $S^1 \times \RR$. In fact, it is easy to see that this union is isometric to the standard $2$-dimensional cusp $\HH^2/\langle \zeta \mapsto \zeta + 1 \rangle$. \vs

\begin{figure*}
\centering{\includegraphics[width=\textwidth]{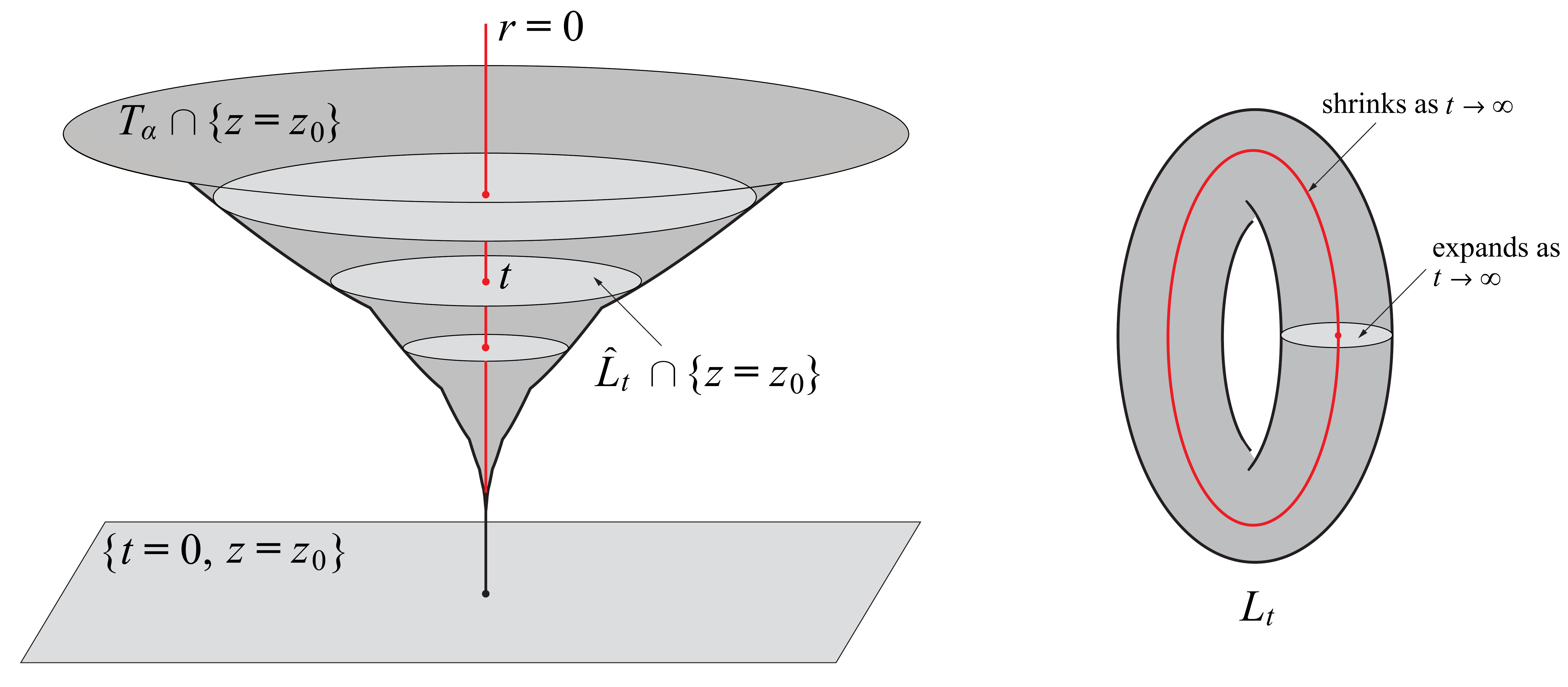}}
\caption{Left: A $z$-slice of the Margulis region $T_\alpha$ for an irrational $\alpha$. The horospheres based at $\infty$ intersect this slice along open $2$-disks, inducing a $g_\alpha$-invariant foliation of $T_\alpha$ into solid cylinders $\hat{L}_t$. Right: The leaf $L_t = \hat{L}_t/ \langle g_\alpha \rangle$ of the quotient foliation $\ff$ of the Margulis cusp $C_\alpha$, homeomorphic to a solid torus.}
\label{HOR} 
\end{figure*}

The foliation $\ff$ has an intrinsic description in terms of the geometry of the Margulis cusp $C_\alpha$. There is a unique $1$-dimensional foliation $\ff_G$ of $C_\alpha$ whose leaves consist of geodesics which forever stay in $C_\alpha$ in forward or backward time (they correspond to vertical geodesics in $T_\alpha$ landing at $\infty$), and $\ff$ is the $3$-dimensional foliation whose leaves are everywhere orthogonal to $\ff_G$. This allows us to recover the rotation angle $\alpha$ from the geometry of $C_\alpha$: Pick a leaf of $\ff$ and label it $L_1$. The leaf whose core curve is at distance $\log t$ from the core curve of $L_1$ will then be $L_t$. By \eqref{vol}, the function $t \mapsto \text{vol}(L_t)$ determines the inverse $\bb_\alpha^{-1}$, hence $\bb_\alpha$. The freedom in choosing the reference leaf $L_1$ means that this process only determines the conjugacy class $r \mapsto \lambda \bb_\alpha(\lambda^{-1}r)$ for $\lambda>0$, but that information is enough to determine $\alpha$ uniquely.

\subsection*{Bi-Lipschitz rigidity of Margulis cusps}

We now turn to the proof of Theorem B in \S \ref{sec:intro} on bi-Lipschitz rigidity of the Margulis cusps $C_\alpha$. Any two irrational screw translations $g_{\alpha}$ and $g_{\beta}$ are  topologically conjugate on their respective Margulis regions. For example, the map
\begin{equation}\label{psh}
\varphi:(r,\theta, z, t) \mapsto \big( r, \theta+(\beta-\alpha)z, z, t+\bb_{\beta}(r)-\bb_{\alpha}(r) \big)
\end{equation}
is easily seen to be a piecewise smooth homeomorphism $T_\alpha \to T_\beta$ which satisfies $\varphi \circ g_{\alpha}=g_{\beta} \circ \varphi$. But all such conjugacies must have unbounded geometry, as \thmref{a=b} below will show. \vs

Recall that a homeomorphism $\varphi: X \to Y$ between metric spaces is {\bit bi-Lipschitz} if there is a constant $k \geq 1$ such that
$$
k^{-1} d_X(x,\hat{x}) \leq d_Y(\varphi(x),\varphi(\hat{x})) \leq k \ d_X(x,\hat{x}) \qquad \text{for all} \ x,\hat{x} \in X.
$$
The smallest such $k$ is called the {\bit bi-Lipschitz constant} of $\varphi$. \vs

\begin{theorem}\label{a=b}
Suppose $\alpha, \beta \in \TT$ are irrational and $\varphi: T_\alpha \hookrightarrow \HH^4$ is a bi-Lipschitz embedding which satisfies $\varphi \circ g_{\alpha} = g_{\beta} \circ \varphi$ on $T_\alpha$. Then $\alpha=\beta$.
\end{theorem}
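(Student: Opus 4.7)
The plan is to use the bi-Lipschitz conjugacy to transport, for each $x \in T_\alpha$, the displacement function $j \mapsto \rho(x, g_\alpha^j x)$ into the corresponding function $j \mapsto \rho(\varphi(x), g_\beta^j \varphi(x))$, and to combine this with the asymptotic analysis of \secref{sec:aa} and the closest return structure of continued fractions to force the tails of the denominator sequences of $\alpha$ and $\beta$ to agree. Writing $k \geq 1$ for the bi-Lipschitz constant of $\varphi$, the conjugacy $\varphi \circ g_\alpha^j = g_\beta^j \circ \varphi$ yields immediately
$$
k^{-1} \rho(x, g_\alpha^j x) \leq \rho(\varphi(x), g_\beta^j \varphi(x)) \leq k \, \rho(x, g_\alpha^j x)
$$
for every $j \in \ZZ$ and $x \in T_\alpha$. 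Via \eqref{dist}, as long as these displacements stay uniformly bounded this translates into a multiplicative comparison $E_\beta(j; \varphi(x)) \asymp E_\alpha(j; x)$, where
$$
E_\gamma(j; x) := \frac{\sqrt{4\sin^2(\pi j \gamma)\, r^2 + j^2}}{t} = \frac{u_{\gamma,j}(r)}{c(\ve)\, t} \qquad \bigl( \gamma \in \TT,\ x=(r,\theta,z,t) \bigr).
$$

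The test sequence is chosen as follows. Denoting by $\{q_n^\alpha\}, \{q_n^\beta\}$ the two sequences of convergent denominators, for each large $n$ with $q_n^\alpha$ present let $[x_n^-, y_n^-]$ be the maximal interval on which $u_{\alpha,q_n^\alpha} = \bb_\alpha$ and set
$$
x_n = (r_n, 0, 0, t_n), \qquad r_n = \sqrt{x_n^- y_n^-}, \qquad t_n = M \, u_{\alpha,q_n^\alpha}(r_n),
$$
where $M \geq 1$ is a large constant to be chosen at the end depending on $k$. By \thmref{fund} and its proof, $r_n \asymp q_n^\alpha q_{n+1}^\alpha$ and $t_n \asymp M q_n^\alpha$, so $x_n$ lies deep inside $T_\alpha$. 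A direct computation using $\| q_n^\alpha \alpha \| \asymp 1/q_{n+1}^\alpha$ (see \lemref{qnp}) gives $E_\alpha(q_n^\alpha; x_n) \asymp 1/M$, while a case analysis in the four regimes $j \leq q_{n-1}^\alpha$, $q_{n-1}^\alpha < j \leq q_n^\alpha$, $q_n^\alpha < j < q_{n+1}^\alpha$, $j \geq q_{n+1}^\alpha$ using the closest return structure of $\alpha$ produces explicit lower bounds on $E_\alpha(j; x_n)$ which dominate $1/M$ unless $j$ lies in a fixed bounded window of convergent denominators of $\alpha$ centered at $q_n^\alpha$.

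Set $y_n = \varphi(x_n)$ and let $j_* = j_*(n)$ minimize $j \mapsto E_\beta(j; y_n)$; by the analog of \lemref{paq} for $\beta$, $j_* = q_{m(n)}^\beta$ for some $m(n)$. Minimality together with the multiplicative comparison gives
$$
E_\alpha(q_{m(n)}^\beta; x_n) \asymp E_\beta(q_{m(n)}^\beta; y_n) \leq E_\beta(q_n^\alpha; y_n) \asymp \frac{1}{M},
$$
which combined with the lower bounds above forces $q_{m(n)}^\beta$ into the same bounded window of convergent denominators of $\alpha$ around $q_n^\alpha$. By choosing $M$ large enough depending on $k$, the gap between $E_\alpha(q_n^\alpha; x_n)$ and $E_\alpha(j; x_n)$ at the remaining candidate indices $j$ can be pushed beyond the factor $k^2$ absorbed by the bi-Lipschitz comparison, pinning down $q_{m(n)}^\beta = q_n^\alpha$ for all sufficiently large $n$. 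Running the same argument with $\varphi^{-1}$ gives the reverse inclusion and shows that the tails of $\{q_n^\alpha\}$ and $\{q_n^\beta\}$ coincide. A short induction, moving downward via the recursion $q_{n+1} = a_{n+1} q_n + q_{n-1}$ and using $0 \leq q_{n-1} < q_n$, then propagates this coincidence to all indices and to the partial quotients, yielding $\alpha = \beta$.

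The principal difficulty is the last sharpening: the bi-Lipschitz comparison by itself only places $q_{m(n)}^\beta$ in a window of nearby convergent denominators of $\alpha$, and when the partial quotients $a_n^\alpha$ are all bounded (for instance when $\alpha$ is of bounded type) several neighbors of $q_n^\alpha$ produce displacements of the same order $1/M$ at the midpoint $x_n$, so the midpoint comparison alone is insufficient to isolate $q_n^\alpha$. Handling this case will require finer quantitative information---either by varying $x_n$ along the interval $[x_n^-, y_n^-]$ and tracking how $E_\alpha(q_{n\pm 1}^\alpha; \cdot)$ crosses $E_\alpha(q_n^\alpha; \cdot)$, or by evaluating the comparison at several carefully correlated test points simultaneously so that the imposed constraints admit only the configuration $\alpha = \beta$.
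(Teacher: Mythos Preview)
Your approach is thoughtful but, as you yourself concede in the final paragraph, incomplete; and the gap you flag is genuine and not repairable with the mechanism you propose. At the midpoint $r_n=\sqrt{x_n^- y_n^-}$ the ratios $E_\alpha(q_{n\pm 1}^\alpha;x_n)/E_\alpha(q_n^\alpha;x_n)$ are controlled by the partial quotients of $\alpha$ and stay bounded whenever $\alpha$ is of bounded type, so the bi-Lipschitz slack $k^2$ cannot separate them. Enlarging $M$ does not help: it rescales \emph{all} the quantities $E_\alpha(j;x_n)$ by the same factor and creates no new gap between competing indices. A secondary issue is your appeal to $\varphi^{-1}$: since $\varphi$ is only an embedding into $\HH^4$ with no hypothesis on its image, the analogous test points on the $\beta$ side need not lie in $\varphi(T_\alpha)$, so the ``reverse inclusion'' step is not available as stated.

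The paper sidesteps both difficulties with a short argument of an entirely different flavor. Instead of matching convergent denominators, one assumes $\alpha\neq\beta$ and produces a contradiction directly. The key number-theoretic input (\lemref{ab}) is that for distinct irrationals one can find a sequence $n_i\to\infty$ with $\|n_i\alpha\|\to 0$ while $\|n_i\beta\|$ stays bounded away from $0$; this is proved by examining the closed subgroup of $\TT^2$ generated by $(\alpha,\beta)$. One then chooses points $x_i\in\bd T_{\alpha,1}$ with radii $r_i$ growing fast enough that $\rho(g_\alpha^{n_i}x_i,x_i)\to 0$. The bi-Lipschitz conjugacy forces $\rho(g_\beta^{n_i}y_i,y_i)\to 0$ for $y_i=\varphi(x_i)$, and a single lemma (\lemref{nbd}) pinning $\varphi(\bd T_{\alpha,1})$ to a controlled neighborhood of $\bd T_{\beta,1}$ converts this into $\sin(\pi n_i\beta)\to 0$, contradicting the choice of $n_i$. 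The argument is asymmetric in $\alpha,\beta$ but the reverse inequality is reduced to the first case by passing to an iterate; none of the fine combinatorics of \S\ref{sec:ca}--\ref{sec:aa} is needed.
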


Notice that no a priori assumption is made on the image $\varphi(T_\alpha)$. This immediately gives Theorem B, for any bi-Lipschitz embedding $C_\alpha \hookrightarrow C_\beta$ lifts to a bi-Lipschitz embedding $T_\alpha \hookrightarrow T_\beta$ which conjugates $g_\alpha$ to $g_\beta$. As another application, we recover the following result of Kim in \cite{Ki}:

\begin{corollary}\label{kim}
The following conditions on irrational numbers $\alpha,\beta \in \TT$ are equivalent:
\begin{enumerate}
\item[(i)]
The maps $g_{\alpha} , g_{\beta} \in \isom$ are bi-Lipschitz conjugate.
\item[(ii)]
The restrictions $g_{\alpha}|_{\bd{\HH}^4}, g_{\beta}|_{\bd{\HH}^4} \in \mob(3)$ are quasiconformally conjugate.
\item[(iii)]
$\alpha=\beta$.
\end{enumerate}
\end{corollary}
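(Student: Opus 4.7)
The plan is to close the cycle $(\mathrm{iii}) \Rightarrow (\mathrm{i}) \Rightarrow (\mathrm{ii}) \Rightarrow (\mathrm{i})$, using \thmref{a=b} as the main lever for recovering $\alpha=\beta$ from any of the conjugacy hypotheses. The implications $(\mathrm{iii}) \Rightarrow (\mathrm{i})$ and $(\mathrm{iii}) \Rightarrow (\mathrm{ii})$ are immediate, since when $\alpha=\beta$ the identity is a $1$-bi-Lipschitz and $1$-quasiconformal self-conjugacy.

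For $(\mathrm{i}) \Rightarrow (\mathrm{iii})$, I would take a bi-Lipschitz $F: \HH^4 \to \HH^4$ with $F \circ g_\alpha = g_\beta \circ F$ and restrict it to the Margulis region. By \lemref{mar}, $T_\alpha$ is $g_\alpha$-invariant, so $F|_{T_\alpha}: T_\alpha \hookrightarrow \HH^4$ is a bi-Lipschitz embedding intertwining $g_\alpha$ and $g_\beta$, and \thmref{a=b} forces $\alpha=\beta$. In particular, the equivalence $(\mathrm{i}) \Leftrightarrow (\mathrm{iii})$ is complete.

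For $(\mathrm{i}) \Rightarrow (\mathrm{ii})$, I would invoke the classical boundary extension result: a bi-Lipschitz self-homeomorphism of $\HH^4$ extends continuously to a quasi-symmetric, and hence quasiconformal, self-homeomorphism of $\bd \HH^4 \cong S^3$, with the equivariance passing to the boundary by continuity.

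The main step is $(\mathrm{ii}) \Rightarrow (\mathrm{i})$. Starting from a $K$-quasiconformal conjugacy $f: S^3 \to S^3$ satisfying $f \circ g_\alpha = g_\beta \circ f$, the fact that $\infty$ is the unique fixed point of both $g_\alpha$ and $g_\beta$ forces $f(\infty)=\infty$, so $f$ is a quasiconformal self-map of $\RR^3$. I would then apply the Tukia-V\"{a}is\"{a}l\"{a} (or Douady-Earle barycentric) extension to obtain an equivariant extension $F: \HH^4 \to \HH^4$ of $f$. The hard part will be upgrading this extension from quasi-isometric to bi-Lipschitz, so that \thmref{a=b} can be applied. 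My plan for this upgrade is to exploit the fact that $\langle g_\alpha \rangle$ acts cocompactly on horospheres based at $\infty$: deep inside $T_\alpha$ the orbits of $g_\alpha$ become arbitrarily short in the hyperbolic metric, so averaging $F$ over sufficiently long orbit segments absorbs the additive defect of the quasi-isometric inequality into the multiplicative constant. A cleaner alternative would be to verify that the large-scale return-map argument in the proof of \thmref{a=b} is insensitive to bounded additive errors, so that the theorem applies verbatim to the quasi-isometric extension; either route reduces $(\mathrm{ii})$ to $(\mathrm{i})$ and thus to $(\mathrm{iii})$.
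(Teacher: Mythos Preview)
Your argument for $(\mathrm{i})\Rightarrow(\mathrm{iii})$ via restriction to $T_\alpha$ and \thmref{a=b} is exactly what the paper does, and your trivial implications from $(\mathrm{iii})$ are fine. The discrepancy is entirely in how you handle $(\mathrm{ii})\Rightarrow(\mathrm{i})$.

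The paper disposes of the equivalence $(\mathrm{i})\Leftrightarrow(\mathrm{ii})$ in one line by citing Tukia \cite{Tu}: a quasiconformal self-map of $S^3$ that conjugates one M\"obius group to another extends to an equivariant quasiconformal self-map of $\HH^4$, and a quasiconformal self-homeomorphism of $\HH^4$ is automatically bi-Lipschitz in the hyperbolic metric. So no ``upgrade'' step is needed at all; the extension you obtain is already bi-Lipschitz, and \thmref{a=b} applies directly.

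Your proposed upgrade, by contrast, rests on a false premise. The cyclic group $\langle g_\alpha\rangle$ does \emph{not} act cocompactly on horospheres based at $\infty$: such a horosphere is a copy of Euclidean $\RR^3$, the action is the screw motion $(r,\theta,z)\mapsto(r,\theta+\alpha,z+1)$, and the quotient is homeomorphic to $\RR^2\times S^1$, which is noncompact. Hence the averaging idea, as you describe it, cannot absorb the additive defect. Your alternative---reworking the proof of \thmref{a=b} to tolerate an additive error---is plausible but is extra work the paper neither needs nor does. The cleanest fix is simply to recognize that the standard equivariant extensions (Tukia's in particular) already land in the bi-Lipschitz category.
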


\begin{proof}
The equivalence (i) $\Longleftrightarrow$ (ii) follows from a theorem of Tukia \cite{Tu}, while (i) $\Longrightarrow$ (iii) is immediate from \thmref{a=b}.
\end{proof}

The proof of \thmref{a=b} will be based on two lemmas. For the first lemma, consider the decomposition $T_\alpha =\bigcup_{j=1}^{\infty} T_{\alpha,j}$ of the Margulis region given by \eqref{oa}. Recall that
$$
T_{\alpha,j} = \{ x \in \HH^4: \rho(g_{\alpha}^j(x),x) < \ve \} =
\{ (r,\theta, z, t) : t > u_{\alpha,j}(r) \},
$$
where $u_{\alpha,j}$ is defined by \eqref{UJ}. Note that each $T_{\alpha,j}$ is invariant under $g_\alpha$.

\begin{lemma}\label{nbd}
Fix an integer $j \geq 1$ and consider a bi-Lipschitz embedding $\varphi: \bd T_{\alpha,j} \hookrightarrow \HH^4$ which satisfies $\varphi \circ g_{\alpha} = g_{\beta} \circ \varphi$. Then the image $\varphi(\bd T_{\alpha,j})$ lies in a neighborhood of $\bd T_{\beta,j}$ whose size depends on the choice of $\ve$ in $(0,\ve_4]$ and the bi-Lipschitz constant $k$ of $\varphi$. More precisely, if $x=(r,\theta,z,t) \in \bd T_{\alpha,j}$ and $y=\varphi(x)=(r',\theta',z',t')$, then
$$
K^{-1} u_{\beta,j}(r') \leq t' \leq K u_{\beta,j}(r')
$$
where $K=K(k,\ve) \geq 1$.
\end{lemma}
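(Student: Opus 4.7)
The plan is to convert the bi-Lipschitz condition into a two-sided bound on the displacement $\rho(g_\beta^j(y),y)$, and then use the explicit formula \eqref{dist} to translate that displacement bound into the desired comparison between $t'$ and $u_{\beta,j}(r')$.

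First I would observe that $\partial T_{\alpha,j}$ is precisely the level set $\{x\in\HH^4:\rho(g_\alpha^j(x),x)=\ve\}$ and is invariant under $g_\alpha$ (since $g_\alpha$ commutes with $g_\alpha^j$). The conjugacy $\varphi\circ g_\alpha=g_\beta\circ\varphi$ therefore iterates on $\partial T_{\alpha,j}$, giving $\varphi\circ g_\alpha^j=g_\beta^j\circ\varphi$ on that set by a one-line induction. Applying the bi-Lipschitz inequality to the pair $x,g_\alpha^j(x)$, both of which lie in the domain of $\varphi$, yields
\[
k^{-1}\,\ve\;=\;k^{-1}\rho(g_\alpha^j(x),x)\;\leq\;\rho(g_\beta^j(y),y)\;\leq\;k\,\rho(g_\alpha^j(x),x)\;=\;k\,\ve.
\]

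Next I would plug $y=(r',\theta',z',t')$ into the distance formula \eqref{dist}, exactly as in the derivation leading to \eqref{UJ}, to get
\[
\cosh(\rho(g_\beta^j(y),y))-1 \;=\; \frac{4\sin^2(\pi j\beta)\,(r')^2+j^2}{2(t')^2}.
\]
Solving for $(t')^2$ and comparing with $u_{\beta,j}(r')^2=(4\sin^2(\pi j\beta)(r')^2+j^2)/(2(\cosh\ve-1))$ gives the exact identity
\[
\frac{t'}{u_{\beta,j}(r')} \;=\; \sqrt{\frac{\cosh\ve-1}{\cosh(\rho(g_\beta^j(y),y))-1}}.
\]
Combining this with the two-sided displacement bound from the previous step yields
\[
\sqrt{\frac{\cosh\ve-1}{\cosh(k\ve)-1}} \;\leq\; \frac{t'}{u_{\beta,j}(r')} \;\leq\; \sqrt{\frac{\cosh\ve-1}{\cosh(k^{-1}\ve)-1}},
\]
so setting
\[
K=K(k,\ve) \;=\; \max\!\left\{\sqrt{\frac{\cosh(k\ve)-1}{\cosh\ve-1}},\;\sqrt{\frac{\cosh\ve-1}{\cosh(k^{-1}\ve)-1}}\right\}\;\geq\;1
\]
produces the asserted inequality $K^{-1}u_{\beta,j}(r')\leq t'\leq K u_{\beta,j}(r')$.

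There is no serious obstacle here; the argument is essentially bookkeeping once one notes that the bi-Lipschitz constant transfers the ``Margulis threshold'' $\ve$ for $g_\alpha^j$ to a comparable threshold $[k^{-1}\ve,k\ve]$ for $g_\beta^j$, and that the boundary functions $u_{\alpha,j}$ and $u_{\beta,j}$ are literal rewordings of the equation ``$\rho(g^j(x),x)=\ve$.'' The only conceptual point worth flagging is the use of the $g_\alpha$-invariance of $\partial T_{\alpha,j}$ to justify iterating the conjugacy, which is needed because $\varphi$ is not assumed to be defined off $\partial T_{\alpha,j}$.
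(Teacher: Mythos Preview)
Your proof is correct and follows essentially the same route as the paper's: transfer the equation $\rho(g_\alpha^j(x),x)=\ve$ through the bi-Lipschitz bound to obtain $k^{-1}\ve\le\rho(g_\beta^j(y),y)\le k\ve$, then rewrite this via \eqref{dist} and \eqref{UJ} as the stated two-sided bound on $t'/u_{\beta,j}(r')$; your explicit constants agree with the paper's $c(k\ve)/c(\ve)$ and $c(k^{-1}\ve)/c(\ve)$. You are slightly more careful than the paper in explicitly noting that $g_\alpha$-invariance of $\partial T_{\alpha,j}$ is what allows the conjugacy to be iterated to $\varphi\circ g_\alpha^j=g_\beta^j\circ\varphi$ on the domain of $\varphi$.
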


\begin{proof}
The condition $\rho(g_{\alpha}^j(x),x)=\ve$ implies $k^{-1} \ve \leq \rho(g_{\beta}^j(y),y) \leq k \ve$. The formulas \eqref{dist} for $\rho$ and \eqref{UJ} for $u_{\beta,j}$ show that
$$
\cosh \, \rho(g_{\beta}^j(y),y) -1 = \frac{1}{2} \left( \frac{u_{\beta,j}(r')}{c(\ve) t'} \right)^2,
$$
where, as before, $c(\ve)=1/\sqrt{2\cosh \ve -2}$. It follows that
\[
\frac{c(k \ve)}{c(\ve)} \leq \frac{t'}{u_{\beta,j}(r')} \leq \frac{c(k^{-1}\ve)}{c(\ve)}. \qedhere
\]
\end{proof}

\begin{remark}
The constant $K(k,\ve)$ is asymptotically independent of $\ve$ since the upper and lower bounds in the last inequality above tend to $k$ and $1/k$ as $\ve \to 0$.
\end{remark}

\begin{lemma}\label{ab}
Suppose $\alpha, \beta$ are irrationals, with $0<\beta<\alpha<1$. Then there is an increasing sequence $\{ n_i \}$ of positive integers such that $\lim_{i \to \infty} \| n_i \alpha \|=0$ and $\lim_{i \to \infty} \| n_i \beta \| \neq 0$.
\end{lemma}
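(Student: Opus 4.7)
The plan is to take $n_i = q_i$, the denominators of the convergents of $\alpha$. By \lemref{qnp}(ii), $\| q_i\alpha \| < 1/q_{i+1} \to 0$, so the first half of the conclusion is automatic, and the content of the proof lies in extracting a subsequence along which $\| q_i \beta \|$ stays bounded away from $0$.

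Argue by contradiction, supposing $\| q_i \beta \| \to 0$, and let $m_i \in \ZZ$ be the integer closest to $q_i \beta$, so that $q_i \beta = m_i + \epsilon_i$ with $\epsilon_i \to 0$. The key step is to show that $\{m_i\}$ must eventually obey the same linear recursion as the convergents, namely $m_{i+2} = a_{i+2} m_{i+1} + m_i$. Indeed, multiplying $q_{i+2} = a_{i+2} q_{i+1} + q_i$ through by $\beta$ gives
\[
m_{i+2} + \epsilon_{i+2} = a_{i+2}(m_{i+1} + \epsilon_{i+1}) + (m_i + \epsilon_i),
\]
so $m_{i+2} - a_{i+2} m_{i+1} - m_i = a_{i+2} \epsilon_{i+1} + \epsilon_i - \epsilon_{i+2}$ is an integer of absolute value less than $1$ for all large $i$ (after, if necessary, refining to a subsequence where the decay of $\{\epsilon_j\}$ dominates the growth of $\{a_j\}$). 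Both sides must therefore be zero, establishing the recursion.

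Since $\begin{pmatrix} p_i & q_i \\ p_{i+1} & q_{i+1} \end{pmatrix}$ is unimodular, the pair $(p_n, q_n)$ is a $\ZZ$-basis for the two-dimensional space of integer solutions to this recursion, so $m_n = A p_n + B q_n$ for some $A, B \in \ZZ$. Dividing by $q_n$ and letting $n \to \infty$ gives the arithmetic relation $\beta = A \alpha + B$. Taking into account $0 < \beta < \alpha < 1$ and the irrationality of both numbers, this integer-affine rigidity is incompatible with the geometric data governing the application of the lemma in \thmref{a=b}, delivering the contradiction.

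The main obstacle is controlling the rate at which $\epsilon_i \to 0$ well enough to force $|a_{i+2}\epsilon_{i+1} + \epsilon_i - \epsilon_{i+2}| < 1$ unconditionally, since for $\alpha$ with unbounded partial quotients the factor $a_{i+2}$ can be arbitrarily large. One route around this is to replace the two-step passage by a single integer $3 \times 3$ determinant
\[
D_i = \det\begin{pmatrix} p_i & q_i & m_i \\ p_{i+1} & q_{i+1} & m_{i+1} \\ p_{i+2} & q_{i+2} & m_{i+2} \end{pmatrix},
\]
whose estimation via the small quantities $p_j - \alpha q_j = -\eta_j$ and $m_j - \beta q_j = -\epsilon_j$ yields $|D_i| \to 0$; then $D_i = 0$ for all large $i$, which is equivalent to the recursion on $\{m_i\}$ and routes around the conditional rate estimate.
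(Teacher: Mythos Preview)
Your argument has two genuine gaps.

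The decisive one is at the very end. Having derived $\beta = A\alpha + B$ with $A,B\in\ZZ$, you declare this ``incompatible with the geometric data governing the application of the lemma in \thmref{a=b}.'' But a lemma must be proved from its own hypotheses; appealing to its intended application is not a proof step. And no contradiction is available from the hypotheses alone: take any irrational $\alpha\in(\tfrac12,1)$ and set $\beta=1-\alpha$. Then $0<\beta<\alpha<1$, both are irrational, and $\|n\beta\|=\|n-n\alpha\|=\|n\alpha\|$ for every integer $n$, so $\|n_i\alpha\|\to 0$ forces $\|n_i\beta\|\to 0$. Your argument would (correctly) arrive at $\beta=-\alpha+1$, i.e.\ $A=-1$, $B=1$, and then have nowhere to go. This example in fact shows the lemma as stated is false; the paper's own proof---which takes a quite different route, via the closure of $\ZZ\cdot(\alpha,\beta)$ in $\TT^2$ and Kronecker's theorem rather than continued fractions---overlooks the same case, reading ``$\alpha,\beta$ rationally dependent'' as $m\alpha=n\beta$ in $\RR$ rather than the correct $\QQ$-linear dependence of $1,\alpha,\beta$.

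The second gap is the proposed determinant workaround. Row-reducing your $3\times 3$ matrix using $p_{i+2}=a_{i+2}p_{i+1}+p_i$ and $q_{i+2}=a_{i+2}q_{i+1}+q_i$ zeroes the first two entries of the last row and yields
\[
D_i \;=\; \pm\,(m_{i+2}-a_{i+2}m_{i+1}-m_i)\;=\;\pm\,(a_{i+2}\epsilon_{i+1}+\epsilon_i-\epsilon_{i+2}),
\]
precisely the quantity you were trying to bound; the column operations you suggest recover the same expression with the same uncontrolled factor $a_{i+2}$. So the determinant does not route around the obstacle. Nor does ``refining to a subsequence'': you need the recursion on $\{m_i\}$ to hold for \emph{consecutive} indices in order to conclude $m_n=Ap_n+Bq_n$.
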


\begin{proof}
Let $\Lambda$ be the closure of the additive subgroup of the torus $\TT^2=\TT \times \TT$ generated by $(\alpha,\beta)$. Clearly $\Lambda$ is infinite since $\alpha,\beta$ are irrational. \vs

If $\alpha,\beta$ are rationally independent, the classical theorem of Kronecker shows that $\Lambda=\TT^2$. In this case, there is an integer sequence $n_i \to \infty$ with $\| n_i \alpha \|$ tending to $0$ and $\| n_i \beta \|$ tending to any prescribed number in the interval $(0,1/2]$. \vs

If $\alpha,\beta$ are rationally dependent, then $\Lambda$ is a $1$-dimensional subgroup of $\TT^2$ homeomorphic to a circle. More precisely, suppose $m \alpha = n \beta$ for some positive integers $m,n$, where necessarily $1 \leq m < n$ since $\beta<\alpha$. Then $\Lambda$ is the image of the line $y=(m/n)x$ under the natural projection $\RR^2 \to \TT^2$, which wraps $n$ times horizontally and $m$ times vertically around $\TT^2$. In this case, there is an integer sequence $n_i \to \infty$ with $\| n_i \alpha \|$ tending to $0$ and $\| n_i \beta \|$ tending to any prescribed fraction of the form $j/n$ for $1 \leq j \leq n/2$.
\end{proof}

\begin{proof}[Proof of \thmref{a=b}]
Without loss of generality assume $\alpha,\beta$ are in $(0,1)$. First suppose $\beta<\alpha$. Let $\{ n_i \}$ be the sequence of positive integers given by \lemref{ab}, and choose an increasing sequence $\{ r_i \}$ of radii such that $\lim_{i \to \infty} n_i/r_i=0$. Define
$$
x_i = (r_i,0,0,u_{\alpha,1}(r_i)) \in \bd T_{\alpha,1} \qquad (i=1,2,3,\ldots)
$$
We have
$$
g_{\alpha}^{n_i}(x_i) = (r_i, n_i \alpha, n_i, u_{\alpha,1}(r_i)),
$$
hence
\begin{align*}
\cosh \rho (g_{\alpha}^{n_i}(x_i),x_i) -1 & = \frac{4 \sin^2(\pi n_i \alpha) \ r_i^2 + n_i^2}{2u_{\alpha,1}^2(r_i)} \\
& \asymp \frac{4 \sin^2 (\pi n_i \alpha) \ r_i^2 + n_i^2}{r_i^2} \\
& = 4 \sin^2(\pi n_i \alpha) + \left( \frac{n_i}{r_i} \right)^2.
\end{align*}
Since $\sin^2(\pi n_i \alpha) \asymp \| n_i \alpha \|^2 \to 0$ and $n_i/r_i \to 0$, it follows that
$$
\rho (g_{\alpha}^{n_i}(x_i),x_i) \to 0 \qquad \text{as} \ i \to \infty.
$$
If $y_i=\varphi(x_i)$, the bi-Lipschitz property of the conjugacy $\varphi$ implies that
$$
\cosh \rho (g_{\beta}^{n_i}(y_i),y_i) -1 \asymp \rho^2 (g_{\beta}^{n_i}(y_i),y_i) \to 0 \qquad \text{as} \ i \to \infty.
$$
Write $y_i=(r'_i,\theta'_i,z'_i,t'_i)$ so $g_{\beta}^{n_i}(y_i) = (r'_i, \theta'_i+n_i \beta, z'_i+n_i, t'_i)$, where $t'_i \asymp u_{\beta,1}(r'_i)$ by \lemref{nbd}. Hence
\begin{align*}
\cosh \rho (g_{\beta}^{n_i}(y_i),y_i) -1 & =  \frac{4 \sin^2 (\pi n_i \beta) \ {r'_i}^2 + {n_i}^2}{2{t'_i}^2} \\
& \asymp \frac{4 \sin^2 (\pi n_i \beta) \ {r'_i}^2 + {n_i}^2}{u_{\beta,1}^2(r'_i)} \\
& \asymp \frac{4 \sin^2 (\pi n_i \beta) \ {r'_i}^2 + {n_i}^2}{{r'_i}^2} \\
& \geq 4 \sin^2(\pi n_i \beta).
\end{align*}
It follows that $\| n_i \beta \| \asymp \sin(\pi n_i \beta) \to 0$ as $i \to \infty$, which is a contradiction. \vs

Next, suppose $\alpha<\beta$. Find a positive integer $n$ such that the fractional parts $\alpha'$ of $n \alpha$ and $\beta'$ of $n \beta$ satisfy $\beta' < \alpha'$. The isometry $\gamma: (r,\theta,z,t) \mapsto (r/n,\theta,z/n,t/n)$ conjugates the iterate $g_\alpha^n$ to $g_{\alpha'}$:
$$
\gamma \circ g_\alpha^n = g_{\alpha'} \circ \gamma \qquad \text{in} \ \HH^4.
$$
Using the definition of the Margulis region and the relation \eqref{inv}, we easily obtain the inclusion $T_{\alpha'} \subset \gamma(T_{\alpha})$. Thus, the restriction of the conjugate map $\gamma \circ \varphi \circ \gamma^{-1}$ is a bi-Lipschitz embedding $T_{\alpha'} \hookrightarrow \HH^4$ conjugating $g_{\alpha'}$ to $g_{\beta'}$, which is impossible by the first case treated above. We conclude that $\alpha=\beta$.
\end{proof}

\section{Appendix: On arithmetical characterization of presence}

The problem we investigate here is when a given denominator $q_n$ in the continued fraction expansion of an irrational number $\alpha$ is present in the boundary function $\bb_\alpha$ (see \S \ref{sec:ca}). We need only consider the case where $a_{n+1}=1$ since \corref{sufi} guarantees that $q_n$ is present when $a_{n+1} \geq 2$. Assuming $a_{n+1}=1$, the same corollary and the definition of fair triples show that
$$
q_{n} \ \text{is present} \quad \Longleftrightarrow \quad r_{n-1,n}<r_{n,n+1}.
$$
By the formula \eqref{rr1}, this condition can be written as
\begin{equation}\label{cc1}
q_{n} \ \text{is present} \quad \Longleftrightarrow \quad \frac{\delta_{n}-\delta_{n+1}}{\delta_{n-1}-\delta_{n}} < \frac{q_{n+1}^2-q_{n}^2}{q_{n}^2-q_{n-1}^2}.
\end{equation}
The right side of the inequality in \eqref{cc1} is easily computed:
$$
\frac{q_{n+1}^2-q_{n}^2}{q_{n}^2-q_{n-1}^2} = \frac{(q_{n}+q_{n-1})^2-q_{n}^2}{q_{n}^2-q_{n-1}^2} = \frac{2q_{n} q_{n-1} + q_{n-1}^2 }{q_{n}^2-q_{n-1}^2} = \frac{2 \mu +1}{\mu^2-1},
$$
where
\begin{equation}\label{MEW}
a_{n} < \mu = \frac{q_{n}}{q_{n-1}} < a_{n}+1.
\end{equation}
To estimate the left side of the inequality in \eqref{cc1}, we use the inequalities
$$
0.95 x^2 \leq \sin^2 x \leq x^2 \qquad \text{for} \ |x| \leq \frac{\pi}{12}
$$
which can be easily proved using calculus. Since the denominator $q_6$ is always $\geq 13$, by \lemref{qnp}(ii),
$$
\pi \| q_n \alpha \| <  \frac{\pi}{q_{n+1}} \leq \frac{\pi}{12} \qquad (n \geq 5).
$$
It follows that
$$
3.8 \pi^2 \| q_n \alpha \|^2 \leq \delta_n = 4 \sin^2 (\pi \| q_n \alpha \|) \leq 4 \pi^2 \| q_n \alpha \|^2 \qquad (n \geq 5).
$$
Introduce the quantity
$$
\lambda = \frac{\| q_{n} \alpha \|}{\| q_{n+1} \alpha \|}
$$
which by \lemref{qnp}(iii) satisfies
\begin{equation}\label{LAM}
a_{n+2} < \lambda < a_{n+2}+1.
\end{equation}
Note that since $a_{n+1}=1$, we have $\| q_{n-1} \alpha \| = \| q_{n} \alpha \| + \| q_{n+1} \alpha \|$, which shows
$$
\frac{\| q_{n-1} \alpha \|}{\| q_{n} \alpha \|} = 1 + \lambda^{-1}.
$$
Thus, for $n \geq 5$,
\begin{align*}
\frac{\delta_{n}-\delta_{n+1}}{\delta_{n-1}-\delta_{n}} & < \frac{4 \| q_{n} \alpha \|^2 - 3.8 \| q_{n+1} \alpha \|^2}{3.8 \| q_{n-1} \alpha \|^2 - 4 \| q_{n} \alpha \|^2} \\
& = \frac{1-0.95 \lambda^{-2}}{0.95(1+\lambda^{-1})^2-1} = \frac{\lambda^2-0.95}{-0.05 \lambda^2+1.9 \lambda + 0.95}.
\end{align*}
and
\begin{align*}
\frac{\delta_{n}-\delta_{n+1}}{\delta_{n-1}-\delta_{n}} & > \frac{3.8 \| q_{n} \alpha \|^2 - 4 \| q_{n+1} \alpha \|^2}{4 \| q_{n-1} \alpha \|^2 - 3.8 \| q_{n} \alpha \|^2} \\
& = \frac{0.95 - \lambda^{-2}}{(1+\lambda^{-1})^2-0.95} = \frac{0.95 \lambda^2-1}{0.05 \lambda^2+2 \lambda + 1}.
\end{align*}
Introducing the rational functions
\begin{align*}
X(t) & =\frac{2t +1}{t^2-1} \\
Y(t) & =\frac{t^2-0.95}{-0.05 t^2+1.9 t + 0.95} \\
Z(t) & =\frac{0.95 t^2-1}{0.05 t^2+2 t + 1},
\end{align*}
the condition \eqref{cc1} and the above estimates can be summarized as
\begin{equation}\label{cc2}
q_{n} \ \text{is present} \quad \Longleftrightarrow \quad \frac{\delta_{n}-\delta_{n+1}}{\delta_{n-1}-\delta_{n}} < X(\mu)
\end{equation}
and
\begin{equation}\label{cc3}
Z(\lambda) < \frac{\delta_{n}-\delta_{n+1}}{\delta_{n-1}-\delta_{n}} < Y(\lambda) \qquad (n \geq 5),
\end{equation}
where $\mu, \lambda$ satisfy \eqref{MEW} and \eqref{LAM}. \vs

\begin{figure*}
\centering{\includegraphics[width=0.5\textwidth]{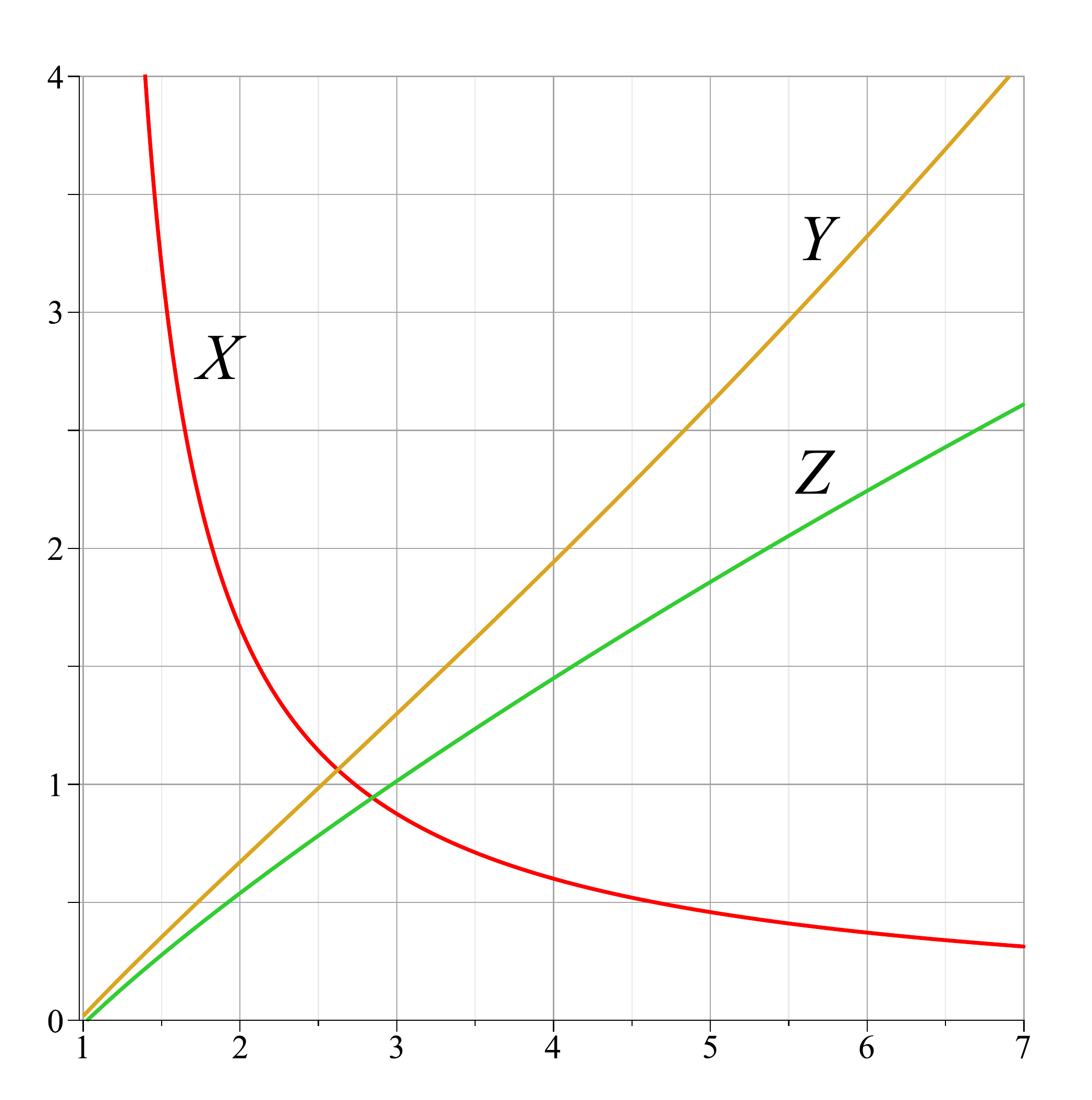}}
\caption{Graphs of the rational functions $X$, $Y$, and $Z$. Note that $Y$ has a singularity at $t \approx 38.5$ (not shown here) but that does not interfere with our estimates on the interval $[1,3]$.}
\label{RLU} 
\end{figure*}

The following can be deduced from \eqref{cc2} and \eqref{cc3} (compare the graphs of $X,Y,Z$ in \figref{RLU}): \vs

\begin{enumerate}
\item[$\bullet$]
If $a_{n} \geq 3$ and $a_{n+2} \geq 3$, then $\mu, \lambda>3$ and
$$
\frac{\delta_{n}-\delta_{n+1}}{\delta_{n-1}-\delta_{n}} > Z(\lambda)>Z(3)>X(3)>X(\mu).
$$
so $q_{n}$ is absent. \vs

\item[$\bullet$]
If $a_{n}=2$ and $a_{n+2} \geq 5$, then $2<\mu<3$, $\lambda>5$ and
$$
\frac{\delta_{n}-\delta_{n+1}}{\delta_{n-1}-\delta_{n}} > Z(\lambda)>Z(5)>X(2)>X(\mu),
$$
so $q_{n}$ is absent. \vs

\item[$\bullet$]
If $a_{n} \geq 5$ and $a_{n+2}= 2$, then $\mu >5$, $2<\lambda<3$ and
$$
\frac{\delta_{n}-\delta_{n+1}}{\delta_{n-1}-\delta_{n}} > Z(\lambda)>Z(2)>X(5)>X(\mu),
$$
so $q_{n}$ is absent. \vs

\item[$\bullet$]
If $a_{n}=1$ and $a_{n+2} \leq 2$, then $1<\mu<2$, $1<\lambda<3$ and
$$
\frac{\delta_{n}-\delta_{n+1}}{\delta_{n-1}-\delta_{n}} < Y(\lambda)<Y(3)<X(2)<X(\mu),
$$
so $q_{n}$ is present. \vs

\item[$\bullet$]
Finally, if $a_{n}=2$ and $a_{n+2}=1$, then $2<\mu<3$, $1<\lambda<2$ and
$$
\frac{\delta_{n}-\delta_{n+1}}{\delta_{n-1}-\delta_{n}} < Y(\lambda)<Y(2)<X(3)<X(\mu),
$$
so $q_{n}$ is present. \vs
\end{enumerate}

\begin{figure*}
\centering{\includegraphics[width=0.6\textwidth]{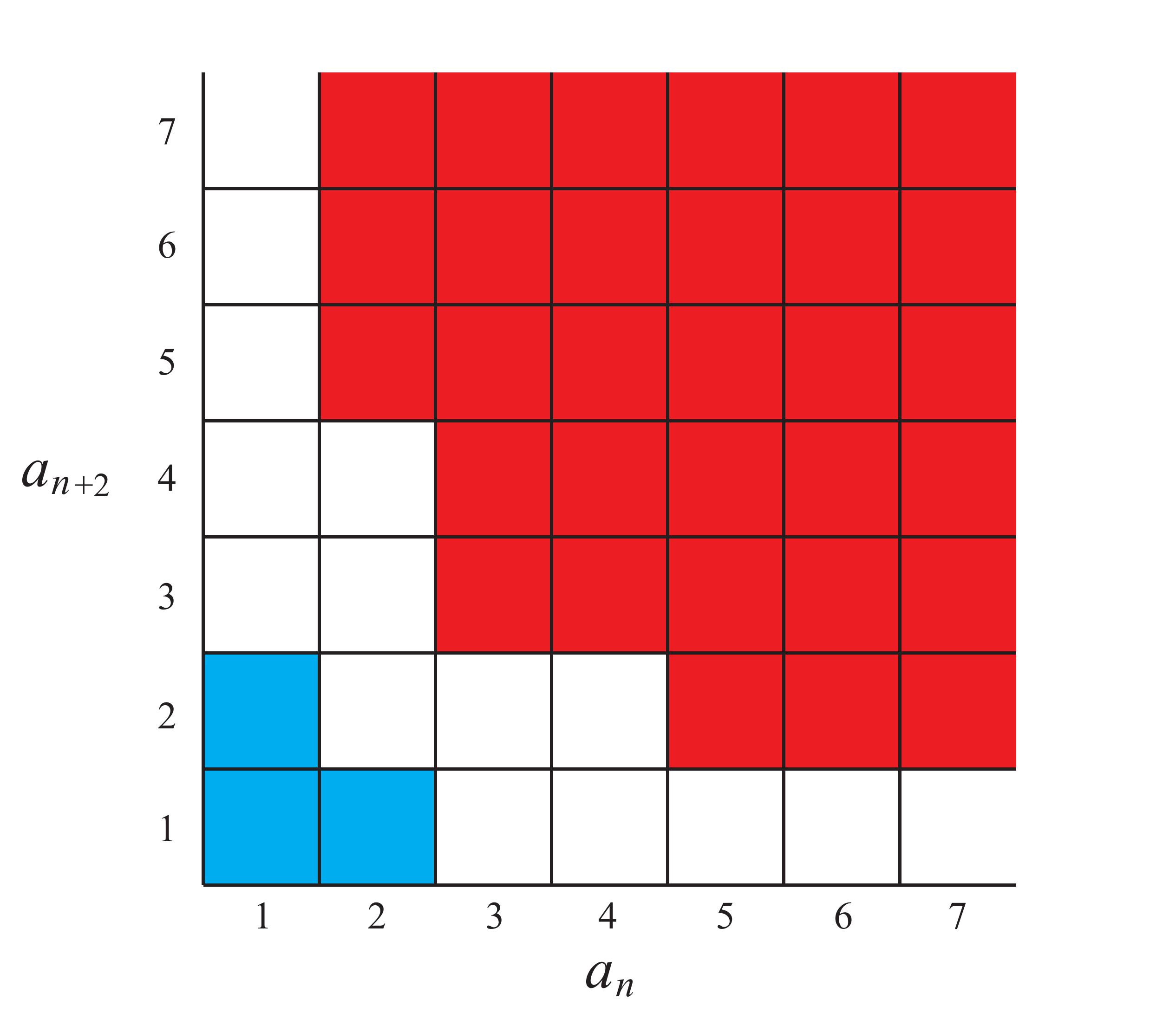}}
\caption{The locus of presence (blue) and absence (red) of $q_n$ in the $(a_{n},a_{n+2})$-plane when $a_{n+1}=1$. Here we assume $n \geq 5$. The white cells can go either blue or red depending on other partial quotients.}
\label{TAB} 
\end{figure*}

These findings are summarized in \figref{TAB}. In all other cases, the presence or absence of $q_n$ also depends on other
partial quotients such as $a_{n-1}$, $a_{n+3}$, etc.


\begin{thebibliography}{***}

\bibitem{A1} B. Apanasov, {\it Cusp ends of hyperbolic manifolds}, Ann. Global Analysis and Geometry, {\bf 3} (1985) 1-11.

\bibitem{A2} B. Apanasov, {\it Conformal geometry of discrete groups and manifolds}, Walter de Gruyter, 2000.

\bibitem{BP} R. Benedetti and C. Petronio, {\it Lectures on Hyperbolic Geometry}, Springer, 2003.

\bibitem{E} V. Erlandsson, {\it The Margulis region and screw parabolic elements of bounded type}, \href{http://arxiv.org/abs/1209.5680}{arXiv:1209.5680}, to appear in Bull. Lond. Math. Soc.

\bibitem{EZ} V. Erlandsson and S. Zakeri {\it A discreteness criterion for groups containing parabolic isometries},  	\href{http://arxiv.org/abs/1304.2298}{arXiv:1304.2298}.

\bibitem{HW} G. Hardy and E. Wright, {\it An Introduction to the Theory of Numbers}, 5th ed., Oxford University Press, 1980.

\bibitem{H} M. Herman, {\it Sur la conjugaison diff\'erentiable des diff\'eomorphismes du cercle \`a des rotations}, Publications Math\'ematiques de l'IH\'{E}S, {\bf 49} (1979) 5-233.

\bibitem{J} T. J{\o}rgensen, {\it On discrete groups of M\"{o}bius transformations}, Amer. J. Math. {\bf 98} (1976) 739-749.

\bibitem{Ke} R. Kellerhals, {\it Collars in} $\operatorname{PSL}(2,\HH)$, Annales Academiae Scientiarum Fennicae, {\bf  26} (2001) 51-72.

\bibitem{Ki} Y. Kim, {\it Quasiconformal stability for isometry groups in hyperbolic 4-space}, Bull. London Math. Soc., {\bf 43} (2011) 175-187.

\bibitem{Oh} H. Ohtake, {\it On discontinuous subgroups with parabolic transformations of the {M}\"obius groups}, J. Math. Kyoto Univ., {\bf 25} (1985) 807-816

\bibitem{R} J. Ratcliffe, {\it Foundations of Hyperbolic Manifolds}, Springer, 1994.

\bibitem{Sh} H. Shimizu, {\it On discontinuous groups operating on the product of the upper half planes}, Ann. of Math. {\bf 77} (1963) 33-71.

\bibitem{Su} P. Susskind, {\it The Margulis region and continued fractions}, Complex manifolds and hyperbolic geometry (Guanajuato, 2001), Contemp. Math., {\bf 311}, Amer. Math. Soc., Providence, RI, (2002) 335-343.

\bibitem{Th} W. Thurston, {\it Three-Dimensional Geometry and Topology, Vol. 1}, Princeton University Press, 1997.

\bibitem{Tu} P. Tukia, {\it Quasiconformal extension of quasisymmetric mappings compatible with a M\"{o}bius group}, Acta Math. {\bf 154} (1985) 153–193.

\bibitem{W} P. Waterman, {\it M\"{o}bius transformations in several dimensions}, Adv. in Math., {\bf 101} (1993) 87-113.

\end{thebibliography}
\end{document}